\documentclass[12pt]{amsart}
\usepackage{amssymb}
\usepackage{graphics}
\usepackage{latexsym}
\usepackage{amsmath}
\usepackage{amssymb,amsthm,amsfonts}
\usepackage{mathtools}
\usepackage{amscd}
\usepackage[arrow, matrix, curve]{xy}
\usepackage{syntonly}
\ExecuteOptions{dvips} \marginparwidth 0pt \oddsidemargin 1.5 truecm
\evensidemargin 1.5 truecm \marginparsep 1pt \topmargin 1pt
\textheight 22.5 truecm \textwidth 14.5 truecm
\parindent 0cm

\title[]{Periodic Higgs subbundles in positive and mixed characteristic}
\author[Mao Sheng]{Mao Sheng}
\email{msheng@ustc.edu.cn} \address{School of Mathematical Sciences,
University of Science and Technology of China, Hefei, 230026, China}
\author[Kang Zuo]{Kang Zuo}
\email{zuok@uni-mainz.de}
\address{Institut f\"{u}r  Mathematik, Universit\"{a}t
Mainz, Mainz, 55099, Germany}

\begin{document}
\theoremstyle{plain}
\newtheorem{thm}{Theorem}[section]
\newtheorem{theorem}[thm]{Theorem}
\newtheorem{lemma}[thm]{Lemma}
\newtheorem{corollary}[thm]{Corollary}
\newtheorem{proposition}[thm]{Proposition}
\newtheorem{addendum}[thm]{Addendum}
\newtheorem{variant}[thm]{Variant}
\theoremstyle{definition}
\newtheorem{lemma and definition}[thm]{Lemma and Definition}
\newtheorem{construction}[thm]{Construction}
\newtheorem{notation}[thm]{Notation}
\newtheorem{question}[thm]{Question}
\newtheorem{problem}[thm]{Problem}
\newtheorem{remark}[thm]{Remark}
\newtheorem{remarks}[thm]{Remarks}
\newtheorem{definition}[thm]{Definition}
\newtheorem{claim}[thm]{Claim}
\newtheorem{assumption}[thm]{Assumption}
\newtheorem{assumptions}[thm]{Assumptions}
\newtheorem{properties}[thm]{Properties}
\newtheorem{example}[thm]{Example}
\newtheorem{conjecture}[thm]{Conjecture}
\newtheorem{proposition and definition}[thm]{Proposition and Definition}
\numberwithin{equation}{thm}

\newcommand{\pP}{{\mathfrak p}}
\newcommand{\sA}{{\mathcal A}}
\newcommand{\sB}{{\mathcal B}}
\newcommand{\sC}{{\mathcal C}}
\newcommand{\sD}{{\mathcal D}}
\newcommand{\sE}{{\mathcal E}}
\newcommand{\sF}{{\mathcal F}}
\newcommand{\sG}{{\mathcal G}}
\newcommand{\sH}{{\mathcal H}}
\newcommand{\sI}{{\mathcal I}}
\newcommand{\sJ}{{\mathcal J}}
\newcommand{\sK}{{\mathcal K}}
\newcommand{\sL}{{\mathcal L}}
\newcommand{\sM}{{\mathcal M}}
\newcommand{\sN}{{\mathcal N}}
\newcommand{\sO}{{\mathcal O}}
\newcommand{\sP}{{\mathcal P}}
\newcommand{\sQ}{{\mathcal Q}}
\newcommand{\sR}{{\mathcal R}}
\newcommand{\sS}{{\mathcal S}}
\newcommand{\sT}{{\mathcal T}}
\newcommand{\sU}{{\mathcal U}}
\newcommand{\sV}{{\mathcal V}}
\newcommand{\sW}{{\mathcal W}}
\newcommand{\sX}{{\mathcal X}}
\newcommand{\sY}{{\mathcal Y}}
\newcommand{\sZ}{{\mathcal Z}}
\newcommand{\A}{{\mathbb A}}
\newcommand{\B}{{\mathbb B}}
\newcommand{\C}{{\mathbb C}}
\newcommand{\D}{{\mathbb D}}
\newcommand{\E}{{\mathbb E}}
\newcommand{\F}{{\mathbb F}}
\newcommand{\G}{{\mathbb G}}
\newcommand{\HH}{{\mathbb H}}
\newcommand{\I}{{\mathbb I}}
\newcommand{\J}{{\mathbb J}}
\renewcommand{\L}{{\mathbb L}}
\newcommand{\M}{{\mathbb M}}
\newcommand{\N}{{\mathbb N}}
\renewcommand{\P}{{\mathbb P}}
\newcommand{\Q}{{\mathbb Q}}
\newcommand{\R}{{\mathbb R}}
\newcommand{\SSS}{{\mathbb S}}
\newcommand{\T}{{\mathbb T}}
\newcommand{\U}{{\mathbb U}}
\newcommand{\V}{{\mathbb V}}
\newcommand{\W}{{\mathbb W}}
\newcommand{\X}{{\mathbb X}}
\newcommand{\Y}{{\mathbb Y}}
\newcommand{\Z}{{\mathbb Z}}
\newcommand{\id}{{\rm id}}
\newcommand{\rank}{{\rm rank}}
\newcommand{\END}{{\mathbb E}{\rm nd}}
\newcommand{\End}{{\rm End}}
\newcommand{\Hom}{{\rm Hom}}
\newcommand{\Hg}{{\rm Hg}}
\newcommand{\tr}{{\rm tr}}
\newcommand{\Cor}{{\rm Cor}}
\newcommand{\GL}{\mathrm{GL}}
\newcommand{\SL}{\mathrm{SL}}
\newcommand{\Aut}{\mathrm{Aut}}
\newcommand{\Sym}{\mathrm{Sym}}
\newcommand{\DD}{\mathbf{D}}
\newcommand{\EE}{\mathbf{E}}
\newcommand{\Gal}{\mathrm{Gal}}
\newcommand{\GSp}{\mathrm{GSp}}
\newcommand{\Spf}{\mathrm{Spf}}
\newcommand{\Spec}{\mathrm{Spec}}
\newcommand{\SU}{\mathrm{SU}}
\newcommand{\Res}{\mathrm{Res}}
\newcommand{\Rep}{\mathrm{Rep}}
\newcommand{\Span}{\mathrm{Span}}
\newcommand{\SOV}{\mathrm{SOV}}
\newcommand{\ord}{\mathrm{ord}}

\thanks{This work is supported by the SFB/TR 45 `Periods, Moduli
Spaces and Arithmetic of Algebraic Varieties' of the DFG, and
partially supported by the University of Science and Technology of
China.}

\begin{abstract}
Let $k$ be an algebraically closed field of odd characteristic $p$
and  $X$ a proper smooth scheme over the Witt ring $W(k)$. To an
object $(M,Fil^{\cdot},\nabla,\Phi)$ in the Faltings category
$\mathcal{MF}^{\nabla}_{[0,n]}(X), n\leq p-2$, one associates an
\'{e}tale local system $\V$ over the generic fiber of $X$ and a
Higgs bundle $(E,\theta)$ over $X$. Our motivation is to find the
analogue of the classical Simpson correspondence for the categories
of subobjects of $\V$ and $(E,\theta)$. Our main discovery in this
paper is the notion of periodic Higgs subbundles, both in positive
characteristic and in mixed characteristic. In char $p$, it relies
on the inverse Cartier transform constructed by Ogus and Vologodsky
in their work on the char $p$ nonabelian Hodge theory. A lifting of
the inverse Cartier transform to mixed characteristic is
constructed, which is used for the notion of periodicity in mixed
characteristic. We show a one to one correspondence between the set
of periodic Higgs subbundles of $(E,\theta)$ and the set of
\'{e}tale sub local systems of $\V\otimes_{\Z_{p}}\Z_{p^r}$, where
$r$ is a natural number. The notion turns out to be useful in
applications. We have proven, among other results, that the
reduction $(E,\theta)_0$ of $(E,\theta)$ modulo $p$ is Higgs stable,
if and only if, the corresponding representation $\V$ is absolutely
irreducible over $k$.
\end{abstract}

\maketitle

\tableofcontents

\section{Introduction}
Let $\V$ be a complex polarizable variation of Hodge structures
(abbreviated as $\C$-PVHS) over a projective algebraic manifold and
$(E,\theta)$ the corresponding Higgs bundle (see \S1 \cite{De}, \S4 \cite{Si}). Then $(E,\theta)$ is
Higgs polystable of slope zero, that is,
$(E,\theta)=\oplus_{i}(G_i,\theta_i)$ with $(G_i,\theta_i)$ Higgs
stable of slope zero. Each direct factor corresponds to a sub
$\C$-PVHS, since the Hodge metric is indeed Hermitian-Yang-Mills by
the curvature formula due to P. Griffiths (see Theorem 5.2
\cite{Gr}). In this paper we intend to work out a char $p$ as well
as $p$-adic analogue of this result. Our guiding principle is that
the relative Frobenius in the $p$-adic case is a replacement of the
Hodge metric in the complex case.
\begin{notation}
For a nonnegative integer $m$, we denote the reduction of an object
defined over $W:=W(k)$ modulo $p^{m+1}$ by attaching the subscript $m$. As
an example, $X_0$ means the mod $p$ reduction of $X$, i.e, the
closed fiber of $X$ over $W$.
\end{notation}
A good $p$-adic analogue of the category of $\C$-PVHSs is the
category $\mathcal{MF}^{\nabla}_{[0,n]}(X)$ with $n\leq p-2$
(abbreviated as $\mathcal{MF}^\nabla$) introduced by G. Faltings
(see \cite{Fa2}-\cite{Fa1} and \S\ref{MF category} for details). Assume $X$ is a smooth projective $W$-scheme with connected geometric generic fiber. Choose and then fix a smooth projective $W$-curve $Z\subset X$ which is a complete intersection of a very ample divisor of $X$ over $W$. The slope for a vector bundle over $X_0$ in this paper means the $\mu_{Z_0}$-slope.
For an object $(M,\nabla,Fil^{\cdot},\Phi)\in \mathcal{MF}^\nabla$,
there are two associated objects: on the one hand, Faltings loc.
cit. associates it to a representation $\V$ of the arithmetic
fundamental group of the generic fiber
$X^0:=X\times_W\mathrm{Frac}(W)$. On the other hand, because of
Griffiths transversality, one associates a Higgs bundle $(E,\theta)$
by taking grading of $(M,\nabla)$ with respect to the filtration
$Fil^{\cdot}$. Explicitly,
$$
E=\oplus_{i=0}^{n} E^{i,n-i}, \theta=\oplus_{i=0}^{n}
\theta^{i,n-i},
$$
where $E^{i,n-i}=Fil^i/Fil^{i+1}$ and the connection $\nabla$
induces an $\sO_X$-morphism
$$
\theta^{i,n-i}: E^{i,n-i}\to E^{i-1,n-i+1}\otimes \Omega_{X|W}.
$$
Borrowing a terminology in complex case (see \S4 \cite{Si}), we call a
Higgs bundle of the above form a system of Hodge bundles. This assumption is however not restrictive. We have
taken the first step by showing the following result.
\begin{proposition}[Proposition 0.2 \cite{SXZ}]
Notation as above. The Higgs bundle $(E,\theta)_0$ over $X_0$ is
Higgs semistable of slope zero.
\end{proposition}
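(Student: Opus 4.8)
\emph{Proof idea.} The plan is to reinterpret the mod $p$ reduction of our $\mathcal{MF}^{\nabla}$-object as a \emph{periodic} Higgs--de Rham flow on $X_0$, in the spirit of the inverse Cartier transform of Ogus and Vologodsky, and to deduce both assertions from the fact that one step of this flow multiplies $\mu_{Z_0}$-degrees by $p$, something a periodic object cannot tolerate unless the relevant degrees vanish. First I would spread everything out: after replacing $k$ by a finite subfield $\F_q=\F_{p^f}$ over which $X$, the object $(M,\nabla,Fil^{\cdot},\Phi)$, and any finite amount of auxiliary data are defined, the Frobenius $\sigma$ of $W(\F_q)$ satisfies $\sigma^f=\mathrm{id}$; this changes nothing and it makes the twists below close up. Since $(E_0,\theta_0)$ is a system of Hodge bundles of width $n\le p-2$, it is a nilpotent Higgs bundle of level $<p-1$, so the inverse Cartier transform $C^{-1}$ attached to the lift $X_1$ applies to it and to each of its Higgs subsheaves. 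Write $\mathcal{H}:=\mathrm{Gr}_{Fil}\circ C^{-1}$ for one step of the Higgs--de Rham flow --- apply $C^{-1}$, then take the associated graded of the resulting Hodge-type filtration; on the objects coming from $\mathcal{MF}^{\nabla}$ it carries a system of Hodge bundles of width $\le n$ on the $i$-th Frobenius twist $X_0^{(i)}$ to one on $X_0^{(i+1)}$. The point at which Faltings's theory enters is that the Frobenius structure $\Phi_0$ forces $(E_0,\theta_0)$ to be a \emph{periodic point} of $\mathcal{H}$ of period dividing $f$; since $\sigma^f=\mathrm{id}$ we have $X_0^{(f)}=X_0$, so $\mathcal{H}^{f}(E_0,\theta_0)\cong(E_0,\theta_0)$.

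\emph{Slope zero.} The transform $C^{-1}$ multiplies $\mu_{Z_0}$-degrees by $p$: for the zero Higgs field it is the relative Frobenius pullback, which does so, and the general case follows because degrees are intersection numbers, hence insensitive to the Frobenius twist, and because the construction of $C^{-1}$ is flat in the Higgs field; taking associated gradeds preserves degree. Thus $\deg\bigl(\mathcal{H}^{m}(E_0,\theta_0)\bigr)=p^{m}\deg(E_0)$ for every $m$, and $\mathcal{H}^{f}(E_0,\theta_0)\cong(E_0,\theta_0)$ gives $\deg(E_0)=p^{f}\deg(E_0)$, whence $\deg(E_0)=0$ and $(E_0,\theta_0)$ has slope zero.

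\emph{Semistability.} Let $\mu$ be the maximal $\mu_{Z_0}$-slope among all Higgs subsheaves of $(E_0,\theta_0)$, attained by the canonical maximal destabilizing Higgs subsheaf $V$. I claim that the same invariant of $\mathcal{H}(E_0,\theta_0)$ is at least $p\mu$ (and, run at every stage, that one step of $\mathcal{H}$ multiplies this invariant by at least $p$). Indeed, $C^{-1}$ is an exact equivalence, so $C^{-1}(V,\theta|_V)$ is a flat subsheaf of $C^{-1}(E_0,\theta_0)=:(H,\nabla)$ with $\deg C^{-1}(V)=p\deg V$ and the same rank as $V$. Restrict the Hodge-type filtration of $H$ to the saturation of $C^{-1}(V)$ and pass to associated gradeds: one gets a subsheaf $W$ of $\mathcal{H}(E_0,\theta_0)$ which is $\theta$-stable because the filtration is Griffiths-transversal for $\nabla$ and $C^{-1}(V)$ is $\nabla$-stable, and which satisfies $\mathrm{rank}\,W=\mathrm{rank}\,V$ and $\deg W\ge \deg C^{-1}(V)=p\deg V$ since saturation and grading do not lower the degree; hence the maximal slope of $\mathcal{H}(E_0,\theta_0)$ is $\ge p\mu$. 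Iterating along the flow gives maximal slope $\ge p^{m}\mu$ for $\mathcal{H}^{m}(E_0,\theta_0)$, and for $m=f$, using $\mathcal{H}^{f}(E_0,\theta_0)\cong(E_0,\theta_0)$, we get $\mu\ge p^{f}\mu$, so $\mu\le 0$. Combined with $\mu(E_0)=0$ from the previous step, this shows every Higgs subsheaf has slope $\le 0=\mu(E_0)$: the Higgs bundle $(E_0,\theta_0)$ is semistable of slope zero.

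\emph{Main obstacle.} The one substantial point is the periodicity assertion --- namely that Faltings's Frobenius $\Phi_0$ corresponds, under the Ogus--Vologodsky comparison, exactly to the closing-up condition $\mathcal{H}^{f}(E_0,\theta_0)\cong(E_0,\theta_0)$, together with the careful bookkeeping of the Frobenius twists $X_0^{(i)}$ (which is precisely what dictates working over a finite field). Everything else --- exactness of $C^{-1}$, the ``degree multiplied by $p$'' lemma, and the stability of Griffiths transversality under passage to flat subsheaves --- is formal once this is set up, and the whole argument is the one of \cite{SXZ}.
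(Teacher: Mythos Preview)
Your argument is correct and follows essentially the same line as the paper (and the cited \cite{SXZ}): the operator $Gr_{Fil^{\cdot}}\circ C_0^{-1}$ multiplies slopes by $p$ (this is Lemma~3.2 of \cite{SXZ}, quoted here in the proof of Proposition~\ref{perodic implies Higgs semistable of slope zero} and in Remark~\ref{remark on quasiperiodic} via the $K_0$-identity $[C_0^{-1}(G)]=[F_X^*(G)]$), and periodicity of $(E,\theta)_0$ under this operator then forces both the slope-zero statement and, applied to an arbitrary Higgs subsheaf, semistability.

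One simplification is available: your descent to a finite subfield and the period~$f$ are unnecessary. In the paper's setup $k$ is algebraically closed and Higgs modules over $X_0$ and $X_0'$ are identified via pullback along $X_0'\to X_0$; with this identification the appendix shows $C_0^{-1}(E,\theta)_0\cong (M,\nabla)_0$, so that $Gr_{Fil^{\cdot}}\circ C_0^{-1}(E,\theta)_0=(E,\theta)_0$ on the nose --- the ambient Higgs bundle is periodic of period~\emph{one}. The ``main obstacle'' you flag is thus exactly the content of the appendix (the comparison of the inverse Cartier transform of \cite{OV} with the construction of \cite{SXZ}), and once that is in place the argument closes up after a single iteration rather than~$f$.
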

The result was first shown by Ogus and Vologodsky in the curve case
under a stronger assumption on $p$ (see Proposition 4.19 \cite{OV}).
In this paper we obtain a further result.
\begin{theorem}[Theorem \ref{irreducibility implies stability}]
The Higgs bundle $(E,\theta)_0$ is Higgs stable if and only if the
representation $\V\otimes k$ is irreducible.
\end{theorem}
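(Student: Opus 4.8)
The strategy is to route both implications through the one-to-one correspondence established earlier between periodic Higgs subbundles of $(E,\theta)_0$ and \'{e}tale sub-local systems, together with the fact that $(E,\theta)_0$ is semistable of slope zero. Two preliminary remarks set this up. First, since $\V_0$ is an $\F_p$-local system, the image of the arithmetic fundamental group of $X^0$ in $\GL(\V_0)$ is finite, so for each $d$ the $d$-dimensional sub-local systems of $\V_0\otimes\F_{p^r}$, over all $r$, are governed by a scheme of finite type over $\F_p$ (a closed subscheme of a Grassmannian), which has a $k$-point if and only if it has a point over some finite field; hence $\V\otimes k$ is reducible if and only if $\V_0\otimes\F_{p^r}$ is reducible for some $r$. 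Second, a proper nonzero periodic Higgs subbundle of $(E,\theta)_0$ is a slope-zero subbundle of a semistable slope-zero Higgs bundle, hence is itself semistable of slope zero with rank strictly between $0$ and $\rank E_0$. Granting these, the implication ``$(E,\theta)_0$ Higgs stable $\Rightarrow$ $\V\otimes k$ irreducible'' is immediate: a proper nonzero sub-local system of some $\V_0\otimes\F_{p^r}$ would correspond to a proper nonzero slope-zero periodic Higgs subbundle of $(E,\theta)_0$, destabilizing it.

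For the converse I argue by contraposition: assuming $(E,\theta)_0$ is not Higgs stable, I must exhibit a proper nonzero periodic Higgs subbundle. By semistability there is a saturated Higgs subbundle $(F,\theta_F)\subsetneq(E,\theta)_0$ with $\mu(F)=0$ and $0<\rank F<\rank E_0$, and I plan to upgrade it by running the Higgs--de Rham flow inside $(E,\theta)_0$. One uses here that $(E,\theta)_0$ has period one, that is $\mathrm{Gr}_{Fil}\,C^{-1}(E,\theta)_0\cong(E,\theta)_0$, which holds because $(E,\theta)_0$ is the associated graded of an $\mathcal{MF}^{\nabla}$-object. Put $(F^{(0)},\theta^{(0)})=(F,\theta_F)$ and, inductively, $(F^{(i+1)},\theta^{(i+1)})=\mathrm{Gr}_{Fil}\bigl(C^{-1}(F^{(i)},\theta^{(i)})\bigr)$, where $C^{-1}(F^{(i)},\theta^{(i)})\subseteq C^{-1}(E,\theta)_0$ carries the filtration induced from the Hodge filtration; the Ogus--Vologodsky inverse Cartier transform applies throughout since the Higgs field of each $F^{(i)}$ is nilpotent of level $\le n+1\le p-1$. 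Exactness of $C^{-1}$ and of passing to the associated graded of an induced filtration, together with semistability of $(E,\theta)_0$, show inductively that each $(F^{(i)},\theta^{(i)})$ is again a saturated slope-zero Higgs subbundle of $(E,\theta)_0$ of rank $\rank F$; in particular a rank jump in $Fil^j\cap C^{-1}(F^{(i)},\theta^{(i)})$ cannot occur, as it would raise a degree and violate semistability, so the flow is well defined.

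The crux is to show this flow is pre-periodic, i.e. $(F^{(i_0+s)},\theta^{(i_0+s)})=(F^{(i_0)},\theta^{(i_0)})$ for some $i_0\ge0$, $s\ge1$; then $(F^{(i_0)},\theta^{(i_0)})$ is the desired proper nonzero periodic Higgs subbundle, which by the correspondence (in positive characteristic, or in mixed characteristic after first lifting it to $X$ via the lifted inverse Cartier transform) produces a proper nonzero sub-local system of $\V_0\otimes\F_{p^s}$, forcing $\V\otimes k$ to be reducible. Boundedness gives that the saturated slope-zero Higgs subbundles of $(E,\theta)_0$ of rank $\rank F$ form the $k$-points of a scheme of finite type over $k$, so the orbit $\{[F^{(i)}]\}_{i\ge0}$ lies in a bounded family; but over a general algebraically closed $k$ an orbit in a bounded family need not be eventually periodic, and I expect this to be the principal obstacle. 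I would resolve it by spreading out: the data $X/W(k)$, $(M,\nabla,Fil,\Phi)$, $(F,\theta_F)$ and the ambient input for the flow descend to a finitely generated base, and at a closed point with finite residue field $\F_q$ --- where ``not Higgs stable'' persists, being a closed condition, and where the relevant parameter scheme has only finitely many points --- the flow becomes a self-map of a finite set and is therefore eventually periodic; compatibility of the correspondence with specialization and surjectivity of the specialization map on fundamental groups then carry the resulting sub-local system back to $\V\otimes k$.

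Apart from this pre-periodicity, the points I expect to need care are: checking that $C^{-1}$ and the grading preserve degree with the normalization under which slopes of subbundles are unchanged, so that all $(F^{(i)},\theta^{(i)})$ genuinely have slope zero; making the flow a morphism of the parameter scheme rather than merely a rational map; and arranging the spreading-out so that the correspondence, the specialization of $\V$, and the fundamental-group maps remain compatible. The remaining work is the bookkeeping of the correspondence, notably the translation between the period $s$ of a periodic Higgs subbundle and the coefficient ring $\F_{p^s}$ of the corresponding sub-local system.
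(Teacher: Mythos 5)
Your proposal follows essentially the same route as the paper's: both implications are pushed through the correspondence of Theorem \ref{char p periodic case}, the forward one via the fact that a periodic Higgs subbundle has slope zero (in the paper, Proposition \ref{perodic implies Higgs semistable of slope zero}, which you assert but should justify by $\mu(Gr_{Fil^\cdot}\circ C_0^{-1}(G,\theta))=p\mu(G)$), and the converse by iterating $Gr_{Fil^\cdot}\circ C_0^{-1}$ on a destabilizing slope-zero subbundle and extracting a periodic subbundle from pre-periodicity of the orbit. The finiteness you identify as the principal obstacle and resolve by spreading out to a finite field is precisely what the paper dispatches in one line by counting Higgs subbundles ``with the same slope, rank and definition field,'' so your treatment is a more explicit rendering of the same step rather than a different argument.
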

The theorem is further generalized in \S\ref{further applications}.
We obtain some results in mixed characteristic as well. Among other
results, we have proven the following
\begin{theorem}[Theorem \ref{lifting statement of prop in geo case}]\label{main theorem 1}
Suppose that $M\in \mathcal{MF}^\nabla$ arises from geometry and is
non $p$-torsion. Suppose furthermore that $X_{0}(k)$ contains an
ordinary point. If there is a decomposition of Higgs bundles over
$X$:
$$(E,\theta)=\bigoplus_{i=1}^{r}(G_i,\theta_i)^{\oplus m_i}$$ such
that the modulo $p$ reductions $\{(G_i,\theta_i)_0\}$s are Higgs
stable and pairwise nonisomorphic, then one has a corresponding
decomposition of $\Z_{p^r}$-representations for a natural number
$r$:
$$
\V\otimes_{\Z_p}\Z_{p^r}=\bigoplus_{i=1}^{r}\V_i
$$
with the equality $\rank_{\Z_{p^r}}\V_i=m_i\rank_{\sO_X}(G_i)$ for
each $i$.
\end{theorem}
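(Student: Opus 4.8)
The plan is to read the statement off the correspondence between periodic Higgs subbundles of $(E,\theta)$ and \'etale sub local systems of $\V\otimes_{\Z_p}\Z_{p^r}$, the task being to recognize the given Higgs blocks inside that correspondence. First I would record the setup: since $M$ arises from geometry and is non $p$-torsion, its Frobenius structure makes $(E,\theta)$ over $X$ a periodic Higgs bundle (of period one) with associated local system $\V$, and --- thanks to the lifting of the inverse Cartier transform to mixed characteristic constructed in this paper together with the ordinary point of $X_0(k)$ --- this periodic structure, and the correspondence, are available over $W$ and rank preserving. Then I would reduce the theorem to one assertion: that each block $(G_i,\theta_i)^{\oplus m_i}$ underlies a periodic Higgs subbundle of $(E,\theta)$ after the unramified base change $\Z_p\hookrightarrow\Z_{p^r}$ for a suitable $r$. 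Indeed, granting that, the correspondence yields $\V_i\subset\V\otimes_{\Z_p}\Z_{p^r}$ with
$$\rank_{\Z_{p^r}}\V_i=\rank_{\sO_X}\!\big((G_i,\theta_i)^{\oplus m_i}\big)=m_i\rank_{\sO_X}(G_i);$$
since $\sum_i(G_i,\theta_i)^{\oplus m_i}=(E,\theta)$ one has $\sum_i\V_i=\V\otimes_{\Z_p}\Z_{p^r}$, and as $\sum_i m_i\rank_{\sO_X}(G_i)=\rank_{\sO_X}E=\rank_{\Z_{p^r}}(\V\otimes_{\Z_p}\Z_{p^r})$ the sum is forced to be direct.

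To establish the periodicity of the blocks I would work modulo $p$ first. By the Proposition, $(E,\theta)_0$ is Higgs semistable of slope zero; and since Higgs stable implies simple, the hypothesis that the $(G_i,\theta_i)_0$ are stable and pairwise nonisomorphic identifies $(E,\theta)_0=\bigoplus_i(G_i,\theta_i)_0^{\oplus m_i}$ as the isotypic (Krull--Schmidt) decomposition, with $\End_{\rm Higgs}((E,\theta)_0)=\bigoplus_i M_{m_i}(k)$. Next I would run one step of the Higgs--de Rham flow: the Ogus--Vologodsky inverse Cartier transform $C^{-1}$ is an exact additive equivalence (in the range pertinent to $\mathcal{MF}^\nabla_{[0,n]}$, $n\le p-2$), so it carries the above decomposition to the isotypic decomposition of the de Rham module $C^{-1}((E,\theta)_0)$, the one underlying $M_0$; taking the grading along the Hodge filtration of $M_0$ returns $(E,\theta)_0$. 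That filtration need not respect the de Rham isotypic decomposition, so grading does not literally preserve the blocks; the point --- and here the Higgs stability and pairwise non-isomorphy, combined with the slope-zero semistability from the Proposition carried along the flow, do the work --- is that the flow nonetheless carries the isotypic structure of $(E,\theta)_0$ into itself, permuting the blocks up to isomorphism, so that after passing to $\F_{p^r}$ (with $r$ governing the Frobenius twisting) each block underlies a periodic Higgs subbundle of $(E,\theta)_0$ in the char-$p$ sense.

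Finally I would lift from $X_0$ to $X$. This is where the hypotheses that $M$ arises from geometry and that $X_0(k)$ has an ordinary point are indispensable: by means of the lifted inverse Cartier transform, with the ordinary point ensuring that the relative Frobenius --- hence the lifted Higgs--de Rham flow --- is globally well behaved, the mod $p$ periodic data on the blocks deforms to honest periodic Higgs subbundles of the lifted flow on $(E,\theta)$ over $X$ of period dividing $r$, with non $p$-torsion guaranteeing no loss of rank. (On the local-system side this amounts to lifting the orthogonal idempotents of $\End_{\pi_1}(\V\otimes_{\Z_p}\F_{p^r})$ produced by the char-$p$ decomposition of $\V\otimes k$ to orthogonal idempotents of $\End_{\pi_1}(\V\otimes_{\Z_p}\Z_{p^r})$ --- automatic as a ring-theoretic matter, $\Z_{p^r}$ being $p$-adically complete with $p$ in its radical --- the content being that the lifted idempotents are geometric, i.e.\ realized by periodic Higgs subbundles over $W$.) With the blocks thus shown to be periodic, the first paragraph concludes the argument.

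The hard part will be this last lifting step --- propagating the char-$p$ periodicity of the isotypic blocks to mixed characteristic along the lifted inverse Cartier transform, which is exactly what the ordinary point and the geometric origin of $M$ are for. A second, more technical point, where the stability and pairwise non-isomorphy of the $(G_i,\theta_i)_0$ are genuinely used, is showing already modulo $p$ that the Higgs--de Rham flow respects the isotypic decomposition of $(E,\theta)_0$ and closes it up after finitely many steps.
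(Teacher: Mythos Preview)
Your overall architecture is right: show each isotypic block $(G_i,\theta_i)^{\oplus m_i}$ is a periodic Higgs subbundle, then invoke the correspondence of Theorem~\ref{theorem on one to one correspondence for periodic points}. But you have inverted the roles of the two special hypotheses, and in doing so you miss the actual mechanism of both steps.

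The ordinary point and the geometric origin of $M$ are \emph{not} used in the lifting from $X_0$ to $X$; they are used already in characteristic $p$ to force $Gr_{Fil^\cdot}\circ C_0^{-1}$ to permute the isotypic blocks as \emph{subbundles} (not merely up to isomorphism). The point is this: $C_0^{-1}$ respects the direct sum, so $(M,\nabla)_0=\bigoplus_{i'} C_0^{-1}(G_{i'},\theta_{i'})$, and by Cartier--Katz descent the \emph{conjugate} filtration decomposes along this. The Hodge filtration, however, has no reason to; what the ordinary point gives (passing to the generic point of $X_0$) is that $F_{hod}$ and $F_{con}$ are complementary, and from this one deduces that $F_{hod}$ also decomposes. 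Only then does taking $Gr_{Fil^\cdot}$ send each de Rham block to a Higgs block, and stability plus pairwise non-isomorphism pin down which one. Your sketch ``the flow carries the isotypic structure into itself, permuting the blocks up to isomorphism'' is not enough: up-to-isomorphism does not give periodic \emph{sub}bundles, and it is precisely the complementarity of the two filtrations that upgrades this to an equality of subbundles. This is the content of Proposition~\ref{geometric case in char p}.

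Conversely, the lifting step does not use the ordinary point at all. What drives it is a rigidity statement (Lemma~\ref{unique lifting lemma}): because the $(G_i,\theta_i)_0$ are stable and pairwise non-isomorphic, one has $\Hom\big((G_i,\theta_i)_0^{\oplus m_i},(E,\theta)_0/(G_i,\theta_i)_0^{\oplus m_i}\big)=0$, and this forces any Higgs subbundle of $(E,\theta)_m$ with the correct rank and the correct mod $p$ reduction to equal $(G_i,\theta_i)_m^{\oplus m_i}$. Since the lifted inverse Cartier transform $C_m^{-1}$ is built so that its mod $p$ reduction is $C_{m-1}^{-1}$, the iterate $(Gr_{Fil^\cdot}\circ C_m^{-1})^{r_i}(G_i,\theta_i)_m^{\oplus m_i}$ has the right rank and reduces mod $p$ to $(G_i,\theta_i)_0^{\oplus m_i}$ by the char $p$ periodicity, hence must coincide with $(G_i,\theta_i)_m^{\oplus m_i}$. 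Induction on $m$ finishes. Your parenthetical about lifting idempotents of $\End_{\pi_1}(\V\otimes\F_{p^r})$ is a correct ring-theoretic observation, but as you yourself note it does not by itself produce periodic Higgs subbundles over $W$; the paper's argument stays on the Higgs side and uses the rigidity lemma instead.
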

The main technical result underlying the above theorem is a $p$-adic
analogue of the Higgs correspondence in the subobjects setting (see
\cite{OV}, \cite{SXZ} for char $p$ case). Combined with the result
of Faltings loc. cit., we obtain a rather satisfactory $p$-adic
analogue of the Simpson correspondence for crystalline
representations of the arithmetic fundamental groups in the
subobjects setting. Precisely, we have the following
\begin{theorem}[Theorem \ref{theorem on one to one correspondence for periodic points}]
Assume $X$ proper smooth over $W$ with connected geometric generic
fiber. Suppose $M\in \mathcal{MF}^\nabla$ non $p$-torsion. Then, for
each natural number $r$, there is a one to one correspondence
between the set of $\Z_{p^r}$-subrepresentations of $\V\otimes
\Z_{p^r}$ and the set of periodic Higgs subbundles in $(E,\theta)$
whose periods are divisors of $r$.
\end{theorem}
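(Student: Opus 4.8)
The plan is to realize the bijection as the composite of two correspondences: Faltings' functor with $\Z_{p^r}$-coefficients, between crystalline $\Z_p$-local systems on $X^0$ and Fontaine--Faltings modules (henceforth FF-modules, i.e.\ objects of $\mathcal{MF}^\nabla$); and the $p$-adic Higgs correspondence in the subobject setting, the main technical result of the paper, relating FF-modules carrying an action of $\Z_{p^r}=W(\F_{p^r})$ to periodic Higgs bundles of period dividing $r$. Schematically,
\begin{equation*}
\begin{gathered}
\{\,\Z_{p^r}\text{-subreps of }\V\otimes_{\Z_p}\Z_{p^r}\,\}\ \overset{\text{Faltings}}{\longleftrightarrow}\ \{\,\Z_{p^r}\text{-sub-FF-modules of }M\otimes\Z_{p^r}\,\}\\
\overset{\text{Higgs}}{\longleftrightarrow}\ \{\,\text{periodic Higgs subbundles of }(E,\theta)\text{ of period dividing }r\,\}.
\end{gathered}
\end{equation*}
The non $p$-torsion hypothesis guarantees that $M\otimes_{\Z_p}\Z_{p^r}$ is again an object of $\mathcal{MF}^\nabla$ (it is flat, so nothing is killed), that its associated graded Higgs bundle is $(E,\theta)\otimes_{\Z_p}\Z_{p^r}$, and that all constructions below commute with reduction modulo powers of $p$.

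First I would treat the Faltings step. The functor $\DD$ from $\mathcal{MF}^\nabla_{[0,n]}(X)$ to crystalline $\Z_p$-local systems on $X^0$ is fully faithful and exact, commutes with internal $\Hom$ and with $-\otimes_{\Z_p}\Z_{p^r}$, and sends $M$ to $\V$; it therefore identifies $\V\otimes_{\Z_p}\Z_{p^r}$ with $\DD$ of $M\otimes_{\Z_p}\Z_{p^r}$ compatibly with the $\Z_{p^r}$-actions. Full faithfulness together with exactness makes $\DD$ injective on sub-objects, while surjectivity onto sub-objects is the stability of the essential image under passage to sub-local-systems with Hodge--Tate weights in $[0,n]$, which is part of the theory recalled in \S\ref{MF category} (the geometric analogue of the fact that a subrepresentation of a crystalline representation is crystalline). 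This yields the first bijection. On both sides the Galois group $\Z/r$ of $\Z_{p^r}/\Z_p$, generated by the Frobenius $\sigma$, acts; every $\Z_{p^r}$-subrepresentation is automatically $\sigma^r$-stable, so the size of its $\sigma$-orbit is a divisor of $r$, and this orbit size is what will surface as the period.

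Next comes the Higgs step, the heart of the proof. It uses the inverse Cartier transform $C^{-1}$ --- that of Ogus--Vologodsky \cite{OV} in characteristic $p$ and its mixed-characteristic lift built in the earlier sections --- together with the Higgs--de Rham flow it generates from $(E,\theta)$: apply $C^{-1}$, pass to the associated graded for a Hodge filtration, apply $C^{-1}$ again, and so on. The object-level correspondence identifies an FF-module with a $\Z_{p^r}$-action with a Higgs bundle equipped with a $\sigma^r$-equivariant isomorphism onto the $r$-th term of its Higgs--de Rham flow --- i.e.\ a periodic Higgs bundle of period dividing $r$ --- and applied to $M\otimes\Z_{p^r}$ it returns $(E,\theta)$; since $X\times_{\Z_p}\Z_{p^r}$ is a disjoint union of $r$ copies of $X$ cyclically permuted by $\sigma$, a $\Z_{p^r}$-subobject of the coefficient extension unfolds into an honest subobject of $M$ together with its flow translates, which is why it lands in $(E,\theta)$ itself. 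For a $\Z_{p^r}$-sub-FF-module $N\subseteq M\otimes\Z_{p^r}$ the inclusion is strict for the Hodge filtrations --- the filtration on $N$ is the induced one --- because $(E,\theta)_0$ is Higgs semistable of slope zero (Proposition~0.2 of \cite{SXZ}), exactly as a sub-local system of a $\C$-PVHS inherits a sub-PVHS; hence $\mathrm{gr}(N)$ is a Higgs subbundle of $(E,\theta)$, functoriality of $C^{-1}$ and of $\mathrm{gr}$ propagates the inclusion through every stage of the flow, and the period closing-up datum restricts to $\mathrm{gr}(N)$, exhibiting it as a periodic Higgs subbundle whose period --- the $\sigma$-orbit size of $N$ --- divides $r$. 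Conversely, a periodic Higgs subbundle of period $f\mid r$ is propagated through the first $f$ stages of the flow by functoriality, and its period-$f$ isomorphism equips the associated sub-FF-module with a $W(\F_{p^f})$-action, hence after $-\otimes_{W(\F_{p^f})}\Z_{p^r}$ with a $\Z_{p^r}$-action and an embedding into $M\otimes\Z_{p^r}$. That these two constructions are mutually inverse is the object-level Higgs correspondence applied to $N$ and to $M\otimes\Z_{p^r}$ simultaneously.

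The main obstacle lies entirely in the Higgs step, and concretely in the strict compatibility of Hodge filtrations: one must show that a sub-FF-module of $M$ --- equivalently, a periodic Higgs subbundle of $(E,\theta)$ --- carries the \emph{induced} Hodge filtration, so that taking the associated graded produces a genuine Higgs subbundle and is compatible with inclusions. This is the arithmetic counterpart of the strict compatibility of a sub-$\C$-VHS with the Hodge filtration, and it is precisely where the slope-zero semistability of \cite{SXZ} enters. Subordinate to it are the construction and, above all, the functoriality of the mixed-characteristic inverse Cartier transform and of the induced Higgs--de Rham flow --- both for morphisms of $\mathcal{MF}^\nabla$-objects and for Higgs subbundles of slope-zero semistable Higgs bundles, as set up in the earlier sections --- together with the bookkeeping that matches the minimal period of a periodic Higgs subbundle with the $\sigma$-orbit size of the corresponding $\Z_{p^r}$-subrepresentation, so that the condition ``period divides $r$'' becomes the automatically satisfied condition ``$\sigma^r$-stable'' on the representation side.
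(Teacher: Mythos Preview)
Your overall architecture matches the paper's: factor through sub-FF-modules of $M\otimes\Z_{p^r}$, use Faltings' functor on the representation side, and on the Higgs side unwind the $\Z_{p^r}$-action via the eigen-decomposition of $M^{\oplus r}$ (your disjoint-union picture of $X\times_{\Z_p}\Z_{p^r}$ is exactly Lemma~\ref{lemma on eigendecomposition of s_MF} lifted to $\Z_{p^r}$), thereby reducing period $r$ to the period-one case of Corollary~\ref{fixed char p case}.

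However, you have the hierarchy of difficulties inverted, and this creates a genuine gap. The ``strict compatibility'' you flag as the main obstacle is vacuous here: in this paper a subobject of $\mathcal{MF}^\nabla$ is \emph{by definition} a $\tilde\Phi$-stable de Rham subbundle equipped with the induced filtration $Fil^iM'=Fil^iM\cap M'$ (see the end of \S\ref{MF category}), so there is nothing to prove, and the semistability result of \cite{SXZ} plays no role in this argument. What you demote to ``subordinate'' is in fact the entire content. First, the lifted inverse Cartier transform $C_m^{-1}$ is \emph{not} available for arbitrary Higgs subbundles of a slope-zero semistable bundle as you suggest; it is constructed only for those whose reduction modulo $p^m$ is already periodic (Proposition~\ref{extension of inverse cartier transform to the larger set}, Definition~\ref{definition of periodic Higgs subbundle at formal level}), and this inductive restriction is essential to the construction. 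Second, and more importantly, your sentence ``its period-$f$ isomorphism equips the associated sub-FF-module with a $W(\F_{p^f})$-action'' presupposes the existence of an associated sub-FF-module, but producing one from a periodic Higgs subbundle is exactly the hard direction: it is the mixed-characteristic lift of the basic property Proposition~\ref{basic property of C_0^{-1}}, namely that when $(G,\theta)$ has period one the de Rham subbundle $C_m^{-1}(G,\theta)$ is $\tilde\Phi$-stable (Theorem~\ref{C_1^{-1}} and its inductive extension). This is what the paper isolates as the key input, and your proposal does not supply it.
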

Besides its applications, the introduction of the notion of a
periodic Higgs subbundle is another contribution of the paper, which
shall have its importance in the $p$-adic nonabelian Hodge theory.
The first result in this field is the work of Deninger and Werner
\cite{DW}, which gives a $p$-adic analogue of the classical result
of Narasimhan and Seshadri for stable vector bundles and unitary
representations. Using the theory of almost \'{e}tale extensions
developed by himself, Faltings has obtained a vast generalization.
In \cite{Fa3}, he has established in the curve case a correspondence
between Higgs bundles and generalized representations. One of major
open problems in this field is to show semistable Higgs bundles
correspond to genuine representations. In the setting of subobjects,
semistability is hidden in, actually equivalent to, the degree zero
condition. We show that over char $p$ this topological assumption on
Higgs subbundles is equivalent to quasi-periodicity, and periodic
Higgs subbundles are in one to one correspondence to
subrepresentations in $\V\otimes k$ of the arithmetic algebraic
fundamental group. We have also considered the analogous problem
over mixed characteristic and has obtained a partial result.
Therefore, we believe that a proper notion of (quasi-)periodic Higgs
bundles connects the notion of semistable Higgs bundles on the one
hand and genuine representations of geometric fundamental groups on
the other hand. In our recent joint work with Lan \cite{LSZ1}, we have developed this point of view in a general setting over positive characteristic.\\

\textbf{Acknowledgements.} The second named author would like to
thank for the hospitality of School of Mathematical Sciences, the
University of Science and Technology of China. We thank Guitang Lan
for a careful reading of this paper. We thank also Xiaotao Sun for
his comments to this work.

\section{The category $\mathcal{MF}^{\nabla}$}\label{MF category}
For the convenience of the reader, we collect some results due to G.
Faltings on the category $\mathcal{MF}^\nabla_{[0,n]}(X), n\leq p-2$
(see Ch. II \cite{Fa2}, see also \S3-4 \cite{Fa1}). Let $X$ be a
smooth $W$-scheme. For $X=\Spec \ W$, the category was introduced by
Fontaine and Laffaille \cite{FL}, consisting of strong $p$-divisible
filtered Frobenius crystals. In this case, there is no connection
involved. In Ch. II \cite{Fa2}, Faltings generalized the category of
Fontaine and Laffaille to a geometric base $X$ as well as the
comparison theory, which gives an equivalence of categories between
this category and a certain category of \'{e}tale local systems over
$X^0$. We would like also remind the reader that A. Ogus has
developed the category of $F$-$T$ crystals (see \cite{O}) from
another point of view, which is however closely related to the category $\mathcal{MF}^{\nabla}$.\\

The notation $\mathcal{MF}^\nabla_{[0,n]}(X)$ in \cite{Fa2} means
originally for $p$-torsion objects. Here we shall include into
the category non $p$-torsion objects as well, whose any reduction modulo $p^{m+1}, m\geq 0$ is an object of $\mathcal{MF}^\nabla$ (see Ch. II h) \cite{Fa2}).
To our purpose, we state here only the exact definition of a non $p$-torsion object in the category, in a form closer to \S3 \cite{Fa1}. For a $p$-torsion object, one needs to modify the formulation below on the strong $p$-divisibility of the relative Frobenius, which shall cause problems mainly in notations.\\

A small affine subset $U$ of $X$ is an open affine subscheme
$U\subset X$ over $W$ which is \'{e}tale over $\G_m^d$. As $X$ is
smooth over $W$, an open covering $\sU$ consisting of small affine
subsets of $X$ exists. For each $U\in \sU$, we choose a Frobenius
lifting $F_{\hat U}$ on $\hat U$, the $p$-adic completion of $U$. An
object in $\mathcal{MF}_{[0,n]}^{\nabla}(X)$ is a quadruple $(M,
Fil^{\cdot}, \nabla, \Phi)$, where
\begin{itemize}
    \item [i)] $(M,Fil^{\cdot})$ is a locally filtered free $\sO_{X}$-module
    with
    $$
    Fil^0M=M,\quad Fil^{n+1}M=0.
    $$
    \item [ii)] $\nabla$ is an integrable connection on $M$ satisfying the Griffiths
    transversality:
    $$
    \nabla(Fil^iM)\subset Fil^{i-1}M \otimes \Omega_{X|W}.
    $$
    \item [iii)] The valuation $\Phi_{F_{\hat U}}$ of the relative Frobenius over $(U,F_{\hat U})$ is an $\sO_{\hat U}$-linear morphism $\Phi_{F_{\hat U}}: F_{\hat U}^*M\to M$ with the strong
    $p$-divisible property:
    $$\Phi_{F_{\hat U}}(F_{\hat U}^*Fil^iM)\subset p^iM,$$
    and
    $$\sum_{i=0}^{n}\frac{\Phi_{F_{\hat U}}(F_{\hat U}^*Fil^iM)}{p^i}=M.$$
    \item [iv)] $\Phi_{F_{\hat U}}$ is horizontal
    with respect to the connection $F_{\hat U}^*\nabla$ on $F_{\hat U}^*M$ and
    $\nabla$ on $M$.
\end{itemize}
The locally filtered-freeness in i) means that $M$ is locally free
and the filtration $Fil^{\cdot}$ on $M$ is locally split. The
pull-back connection $F_{\hat U}^*\nabla$ on $F_{\hat U}^*M$ is the
composite
\begin{eqnarray*}
   F_{\hat U}^*M=F_{\hat U}^{-1}M\otimes_{F_{\hat U}^{-1}\sO_{\hat U}}\sO_{\hat
U} &\stackrel{F_{\hat U}^{-1}\nabla\otimes id}{\longrightarrow}&
 (F_{\hat U}^{-1}M\otimes F_{\hat U}^{-1}\Omega_{\hat
U})\otimes_{F_{\hat U}^{-1}\sO_{\hat U}}\sO_{\hat U} \\
    &=&  F_{\hat U}^*M\otimes F_{\hat U}^*\Omega_{\hat U} \stackrel{id\otimes
dF_{\hat U}}{\longrightarrow}F_{\hat U}^*M\otimes \Omega_{\hat U}.
\end{eqnarray*}
The horizontal condition iv) is expressed by the commutativity of
the diagram
$$
 \xymatrix{
    F_{\hat U}^*M \ar[d]_{F_{\hat U}^*\nabla} \ar[r]^{\Phi_{F_{\hat U}}} &  M \ar[d]^{\nabla} \\
    F_{\hat U}^*M\otimes
\Omega_{\hat U}\ar[r]^{\Phi_{F_{\hat U}}\otimes id} &  M\otimes
\Omega_{\hat U}. }
$$
We shall explain the Taylor formula relating two valuations of the
relative Frobenius as follows. Write $\hat U=\Spf R$ and $F: R\to R$
the chosen Frobenius lifting. Let $R'$ be another $p$-adically
complete, $p$-torsion free $W$-algebra, equipped with a Frobenius
lifting $F': R'\to R'$ and a morphism of $W$-algebras $\iota: R\to
R'$. Then the valuation $\Phi_{F'}: F'^*(\iota^*M)\to \iota^*M$ of
$\Phi$ over $(R',F')$ is the composite
$$
F'^*\iota^*M\stackrel{\alpha}{\cong}
\iota^*F^*M\stackrel{\iota^*\Phi_{F}}{\longrightarrow} \iota^*M,
$$
where the isomorphism $\alpha$ is given explicitly by the formula
(after choosing a system of \'{e}tale local coordinates
$\{t_1,\cdots,t_d\}$ of $U$):
$$
\alpha(e\otimes 1)=\sum_{\underline i}\nabla_{\partial}^{\underline
i}(e)\otimes \frac{z^{\underline i}}{\underline{i}!}.
$$
Here $\underline{i}=(i_1,\cdots,i_d)$ is a multi-index, and
$z^{\underline i}=z_1^{i_1}\cdots z_d^{i_d}$ with$$z_i=F'\circ
\iota(t_i)-\iota\circ F(t_i), 1\leq i\leq d,$$ and
$$\nabla_{\partial}^{\underline j}=\nabla_{\partial_{
t_1}}^{i_1}\cdots\nabla_{\partial_{t_d}}^{i_d}.$$
\begin{example}\label{geometric situation}
Let $f: Y\to X$ be a proper smooth morphism of relative dimension
$n\leq p-2$ between smooth $W$-schemes. Assume that the relative
Hodge cohomologies $R^if_*\Omega^j_Y, i+j=n$ has no torsion. It
follows from Theorem 6.2 \cite{Fa2} that the crystalline direct
image $R^nf_{*}(\sO_{Y},d)$ is an object in
$\mathcal{MF}^{\nabla}_{[0,n]}(X)$.
\end{example}
We call an object of $\mathcal{MF}^\nabla$ in the above example an
object arising from geometry. The main result about this category is
the following
\begin{theorem}[Theorem 2.6* \cite{Fa2}]\label{Faltings theorem}
Notations as above. There exists a fully faithful contravariant
functor $\mathbf{D}$ from $\mathcal{MF}^\nabla$ to the category of
\'{e}tale local systems over $X^0$. The image is closed under
subobjects and quotients.
\end{theorem}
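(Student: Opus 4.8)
The plan is to construct $\mathbf{D}$ by gluing a local ``almost \'{e}tale'' period functor over a covering by small affine subsets; this is Faltings' geometric generalization of the Fontaine--Laffaille functor for $X=\Spec W$. First I would dispose of the non $p$-torsion case: if $M$ is non $p$-torsion then every reduction $M_m:=M/p^{m+1}M$ lies in $\mathcal{MF}^\nabla_{[0,n]}$ and $M=\varprojlim_m M_m$, so it suffices to attach to each $M_m$, functorially and compatibly with the transition maps, an \'{e}tale local system $\mathbf{D}(M_m)$ of $\Z/p^{m+1}$-modules on $X^0$, and then set $\mathbf{D}(M):=\varprojlim_m\mathbf{D}(M_m)$. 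Full faithfulness and closure of the image under subobjects and quotients then pass to the inverse limit once they are known at each finite level (using that $\Hom$'s in both categories are the inverse limits of their mod $p^{m+1}$ analogues, and that a $\Z_p$-sub-local-system of $\mathbf{D}(M)$ is the inverse limit of its reductions, each of which is produced from an object of $\mathcal{MF}^\nabla_{[0,n]}$). So assume $M$ is killed by $p^N$.

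\textbf{Local construction and the comparison isomorphism.} Fix a small affine $U=\Spec R\in\sU$ with its chosen Frobenius lift $F=F_{\hat U}$ on $\hat U$. Following Faltings, let $\bar R$ be the integral closure of $\hat R$ in a maximal extension of $\mathrm{Frac}(\hat R)$ \'{e}tale over $\hat R[1/p]$, with Galois group $G_U$, and attach to the pair $(\hat R,\bar R)$ Fontaine's crystalline period ring $A=A_{\mathrm{crys}}(\bar R)$, carrying a $G_U$-action, a filtration, a Frobenius $\varphi$ (built from $F$) and an integrable connection extending $d$. Define
\[
\mathbf{D}(M)(U)\;=\;\Hom_{Fil,\nabla,\varphi}\!\bigl(M,\ A/p^N\bigr),
\]
a $\Z/p^N$-module with continuous $G_U$-action, the $\Hom$ being taken after extending $M$ to $A/p^N$. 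The essential technical input --- Faltings' almost purity theorem together with the strong $p$-divisibility required in iii) --- is that the evaluation pairing is an \emph{almost isomorphism}: $\mathbf{D}(M)(U)$ is free of rank $\rank_{\hat R}M$ and, dually, $M\otimes_{\hat R}A\cong\mathbf{D}(M)(U)^{\vee}\otimes_{\Z/p^N}A$ compatibly with filtration, connection and Frobenius. This local crystalline comparison is, I expect, the main obstacle: the arithmetic case is Fontaine--Laffaille, and the geometric case is where the almost-\'{e}tale machinery does the real work.

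\textbf{Gluing, full faithfulness, and the Serre property.} The fundamental group of $X^0$ is assembled from the $G_U$ via the descent data on overlaps; the Taylor formula relating the valuations $\Phi_{F_{\hat U}}$ for two different Frobenius lifts (displayed above), together with the $\nabla$-horizontality in iv), shows that $\mathbf{D}(M)(U)$ is canonically independent of the choice of $F_{\hat U}$ and that these identifications are cocycle-compatible, so the $\mathbf{D}(M)(U)$ glue to an \'{e}tale local system $\mathbf{D}(M)$ on $X^0$, contravariantly and functorially in $M$. For full faithfulness one inverts the construction: from a local system $\V$ in the essential image, $M$ is recovered (with its filtration) from the $\varphi$- and $G_U$-invariants of $\V^{\vee}\otimes A$ via the comparison isomorphism, which forces $\Hom$-groups to correspond bijectively. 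Finally one checks that $\mathcal{MF}^\nabla_{[0,n]}$ is abelian and $\mathbf{D}$ exact on torsion objects --- here $n\le p-2$ is essential, so that naive kernels and cokernels remain strongly $p$-divisible --- and then that any sub-local-system $\V'\subset\mathbf{D}(M)$ is again in the image: by d\'{e}vissage to simple objects the comparison isomorphism matches lengths, exactness propagates the required surjection $M\twoheadrightarrow M'$, and the weight bound guarantees that the resulting $M'$ is a genuine object of $\mathcal{MF}^\nabla_{[0,n]}$; the dual argument handles quotients. This last Serre-subcategory statement is the subtlest point after the comparison isomorphism itself.
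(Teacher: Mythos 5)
This statement is not proved in the paper at all: it is quoted verbatim as Theorem~2.6* of Faltings' \emph{Crystalline cohomology and $p$-adic Galois-representations} and used as a black box, so there is no internal proof to compare yours against. What you have written is a reasonable reconstruction of the broad architecture of Faltings' own argument --- reduction to the $p^N$-torsion case by inverse limits, the local period functor $\Hom_{Fil,\nabla,\varphi}(M,A/p^N)$ over small affines, gluing via the Taylor formula for different Frobenius lifts, and recovery of $M$ from $\V$ for full faithfulness --- and in that sense the skeleton is the right one.

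However, as a proof it has genuine gaps rather than merely compressed steps. The entire mathematical content of the theorem sits in the two things you explicitly defer: (a) the almost purity theorem and the resulting statement that the evaluation pairing $M\otimes_{\hat R}A \to \mathbf{D}(M)(U)^{\vee}\otimes A$ is an (almost) isomorphism of the right size, and (b) the Serre-subcategory property of the essential image. For (a) you only name the input; nothing in your text explains why strong $p$-divisibility of $\Phi$ forces $\mathbf{D}(M)(U)$ to have the correct length over $\Z/p^N$ (note that for torsion objects one must count lengths, not ranks --- a general torsion object of $\mathcal{MF}^\nabla$ is a successive extension and is not free over $\sO_X/p^N$, so ``free of rank $\rank_{\hat R}M$'' is not even the right formulation). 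For (b), the d\'{e}vissage you sketch presupposes exactness of $\mathbf{D}$ and a length-matching statement that are themselves consequences of (a); asserting that ``the weight bound guarantees that the resulting $M'$ is a genuine object'' is precisely the point where Faltings has to verify strong $p$-divisibility of the induced filtration on a subquotient, and that verification is absent. So your proposal is an accurate roadmap to the literature, but it does not constitute a proof; within the present paper the correct move is simply to cite Faltings, which is what the authors do.
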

It is more convenient to use the \emph{covariant} functor
$\mathbf{D}^t$, which for a non $p$-torsion object is defined by
$$
\mathbf{D}^t(M)=\Hom(\mathbf{D}(M),\Z_p)\otimes \Z_p(n),
$$
where $\Z_p(n)$ is the Tate twist (see Ch. II h) \cite{Fa2} for
$p$-torsion objects). The image of the functor $\mathbf{D}^t$ is
called the category of crystalline sheaves over $X^0$. It has an
adjoint functor $\mathbf{E}^t$ from the category of crystalline
sheaves to the
category $\mathcal{MF}^\nabla$ as given in Ch. II f)-g) loc. cit..\\

For a non $p$-torsion object $M\in \mathcal{MF}^\nabla$ and each
$U\in \sU$ with chosen Frobenius lifting $F_{\hat U}$, one defines a
local operator $\tilde \Phi_{F_{\hat U}}$ on the set of subbundles
of $M_{U}$ by
$$
\tilde \Phi_{F_{\hat U}}(M')=\sum_{i=0}^{n}\frac{\Phi_{F_{\hat
U}}}{p^i}F_{\hat U}^*Fil^iM',
$$
where $M'\subset M_{U}$ and $Fil^iM'=Fil^iM\cap M'$. A de Rham
subbundle $(M',\nabla)$ of $(M,\nabla)$ is said to be \emph{$\tilde
\Phi$-stable} if
\begin{itemize}
    \item [(i)] the induced filtration $Fil^\cdot M'$ is filtered free,
    \item [(ii)] it holds for each $U\in \sU$
that
$$
\sum_{i=0}^{n}\frac{\Phi_{F_{\hat U}}}{p^i}F_{\hat U}^*Fil^iM'=M'.
$$
\end{itemize}
It follows from the above Taylor formula that the condition (ii) is
independent of the choices of $F_{\hat U}$s. Also, it is tedious but
straightforward to formulate the notion of $\tilde \Phi$-stable de
Rham subbundles for a $p$-torsion object of $\mathcal{MF}^\nabla$.
A subobject of $M\in \mathcal{MF}^\nabla$ is just
a $\tilde \Phi$-stable de Rham subbundle of $(M,\nabla)$, which by Theorem \ref{Faltings theorem} corresponds to a subrepresentation of $\V$.

\section{Periodic Higgs subbundles in positive
characteristic}\label{section on periodic Higgs subbundles in char
p} Assume $X$ smooth projective over $W$ with connected geometric
generic fiber. Let $M$ be an object in the category
$\mathcal{MF}^\nabla$, $\V$ the corresponding representation and
$(E,\theta)=Gr_{Fil^\cdot}(M,\nabla)$ the associated Higgs bundle.
We have shown previously that $(E,\theta)_0$ is Higgs semistable. In
this section we shall show further that
\begin{theorem}\label{irreducibility implies stability}
The Higgs bundle $(E,\theta)_0$ is Higgs stable iff the
representation $\V\otimes k$ is irreducible.
\end{theorem}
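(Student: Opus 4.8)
The plan is to establish the equivalence via the dictionary between $\tilde\Phi$-stable de Rham subbundles of $M$ and subrepresentations of $\V$ (Theorem \ref{Faltings theorem}), together with the Higgs correspondence relating $\tilde\Phi$-stability to periodicity of Higgs subbundles in $(E,\theta)_0$. The logical skeleton is as follows. A Higgs subsheaf $(F,\theta|_F)\subset(E,\theta)_0$ that violates stability is, by Proposition (Proposition 0.2 of \cite{SXZ}), of slope $\leq 0$, and if $(E,\theta)_0$ is not stable we may choose such an $F$ of slope exactly $0$ (pass to the saturation, which only increases the slope, hence keeps it $0$ while remaining a sub-Higgs-sheaf of a semistable object of slope $0$; it is then a subbundle). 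The core of the argument is to show that a slope-zero Higgs subbundle is automatically \emph{periodic}, and that periodic Higgs subbundles of $(E,\theta)_0$ correspond, through the inverse Cartier transform of Ogus--Vologodsky, to $\tilde\Phi$-stable de Rham subbundles of $M_0$, hence — lifting and invoking Faltings — to subrepresentations of $\V\otimes k$. Conversely, a proper nonzero subrepresentation produces a proper nonzero periodic, slope-zero Higgs subbundle, which is a destabilizing subobject.

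First I would set up the positive-characteristic Higgs--de Rham flow: starting from the Higgs bundle $(E,\theta)_0$ on $X_0$, apply the inverse Cartier transform $C^{-1}$ of \cite{OV} (legitimate since the Higgs field is nilpotent of level $\leq n\leq p-2$ and $X$ lifts to $W_2$) to obtain a de Rham bundle, then take the grading of a Griffiths-transverse filtration to return to a Higgs bundle; iterate. A Higgs subbundle is \emph{periodic of period $f$} if it is preserved by $f$ steps of this flow and returns to itself. The key technical input I would prove (or quote from \cite{SXZ}) is: on $X_0$ with $(E,\theta)_0$ semistable of slope $0$, any sub-Higgs-bundle of slope $0$ is preserved by the flow and the induced sub on each stage again has slope $0$ (degrees are preserved under $C^{-1}$ and under grading because these operations are exact and the Hodge/conjugate filtrations are filtered free on the sub); since the set of slope-zero sub-Higgs-bundles of fixed rank in the (bounded) flow orbit is finite — it sits inside a projective Quot-type scheme and the flow permutes a finite set — some iterate fixes it. This finiteness/boundedness step is where I expect the main obstacle: one must control the filtrations chosen at each stage so that the flow is genuinely well-defined on subobjects and so that the orbit is finite, which is exactly the delicate point that the notion of ``periodic Higgs subbundle'' is designed to package.

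Next I would translate periodicity into $\mathcal{MF}^\nabla$-language: a period-$f$ Higgs subbundle of $(E,\theta)_0$ unwinds to a $\tilde\Phi$-stable de Rham subbundle of $M_0$ (after base change to $W(\F_{p^f})$ if $f>1$; here $\V\otimes k$ already absorbs such extensions since $k$ is algebraically closed), using that $\tilde\Phi$ is precisely the shadow of the inverse Cartier transform composed with grading, as in \S\ref{MF category}. By Theorem \ref{Faltings theorem} (more precisely its mod-$p$ / torsion version, via $\mathbf D^t$ and $\mathbf E^t$), $\tilde\Phi$-stable de Rham subbundles of $M_0$ are in bijection with subrepresentations of $\V\otimes k$, and this bijection respects ``proper nonzero.'' Assembling: $(E,\theta)_0$ not stable $\iff$ there is a proper nonzero slope-$0$ Higgs subbundle $\iff$ (via the flow) a proper nonzero periodic Higgs subbundle $\iff$ a proper nonzero $\tilde\Phi$-stable de Rham subbundle of $M_0$ $\iff$ a proper nonzero subrepresentation of $\V\otimes k$ $\iff$ $\V\otimes k$ reducible. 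The only asymmetry to check carefully is the forward direction — that \emph{every} destabilizing subsheaf can be replaced by a slope-$0$ \emph{periodic} subbundle — and that is handled by the saturation-plus-finiteness argument above; the reverse direction is immediate since a subrepresentation's associated graded is a nonzero Higgs subbundle of the correct (zero) slope, hence destabilizing in the semistable object $(E,\theta)_0$.
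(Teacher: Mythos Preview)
Your approach is essentially the same as the paper's: both deduce the theorem from the correspondence (Theorem \ref{char p periodic case}) between periodic Higgs subbundles and $\F_{p^r}$-subrepresentations of $\V\otimes\F_{p^r}$, using a finiteness argument to show that a slope-zero Higgs subbundle is (quasi-)periodic, and hence produces a subrepresentation via the $\tilde\Phi$-stable de Rham subbundle and Faltings' functor. Two small imprecisions to correct: $C_0^{-1}$ multiplies degree by $p$ rather than preserving it (harmless here since the degree is zero), and the flow is not known to be injective on subbundles, so finiteness only gives quasi-periodicity---but then $(Gr_{Fil^\cdot}\circ C_0^{-1})^s(G,\theta)$ is genuinely periodic, which suffices (this is exactly how the paper handles it).
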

The analogous result over $\C$ is a highly nontrivial result in the
theory of Simpson correspondence between complex local systems and
Higgs bundles \cite{Si}. Motivated by this correspondence, one asks
further a refined version of the above theorem.
\begin{question}\label{motivating question in char p}
Is there a one to one correspondence between the set of
subrepresentations of $\V\otimes k$ and the set of Higgs subbundles
of $(E,\theta)$ with trivial Chern classes?
\end{question}
The aim of this section is to give an answer of this question which
yields the above theorem as a direct consequence. Our answer relies
on the Simpson correspondence in positive characteristic established
by Ogus and Vologodsky \cite{OV}. For simplicity, we assume from now
on that $pM=0$ and $X$ is only smooth proper over $W$, so that the
de Rham bundle $(M,Fil^\cdot,\nabla)$ as well as its associated
Higgs bundle are defined over $X_0/k$. Let $X_0'=X_0\times_{\Spec\
k, F_k}\Spec\ k$, where $F_k: \Spec\ k \to \Spec\ k$ is the absolute
Frobenius. The pull-back of a Higgs module over $X_0$ via the
natural map $X_0'\to X_0$ is a Higgs module over $X_0'$. One
identifies the two categories of Higgs modules over $X_0$ and over
$X_0'$. There is a convenient $W_2$-lifting $X_1'$ of $X_0'$,
obtained from $X_1$ via the base change. Then, by setting
$(\sX,\sS)=(X_0/k, X_1'/W_2(k))$, the inverse Cartier transform
$C_{\sX/\sS}^{-1}$ in loc. cit. associates any Higgs subbundle of
$(E,\theta)$ to a flat subbundle of $(M,\nabla)$\footnote{This
statement requires clarification. To avoid interrupting the
main idea, we postpone this task to the appendix.}. For
simplicity, we denote the inverse Cartier transform in our context by $C_0^{-1}$. \\

Our basic observation is that the operator $Gr_{Fil^\cdot}\circ
C_0^{-1}$ acts on the set of Higgs subbundles, and the action is not
trivial in general. Here is an example.
\begin{example}[Section 7 \cite{SZZ}]
Let $F$ be a totally real field and $D$ a quaternion division
algebra over $F$ which is split at one unique real place $\tau$ of
$F$. Let $K$ be an imaginary quadratic field and $L$ a totally
imaginary quadratic extension of $F$ contained in $D$. Put
$\Phi=\Hom(L,\bar \Q)$. Fix an embedding $\bar \Q\to \bar \Q_p$.
Assume $F$ is unramified at $p$ and each prime of $F$ over $p$ stays
prime in $L$. Let $\mathfrak p$ be the prime of $F$ over $p$ given
by $\tau$. One formulates a moduli functor of PEL type over
$\sO_{F_{\mathfrak p}}$ (see \S5.2 \cite{Car}). Under suitable
conditions, it is fine represented by a smooth projective curve
$\mathcal M$ over $\sO_{F_{\mathfrak p}}$, together with a universal
abelian scheme $f: \sX \to \sM$. Let $(M,Fil^\cdot,\nabla)$ be the
relative de Rham bundle of $f$ and $(E,\theta)$ the associated Higgs
bundle. Then one has an eigen-decomposition under the action of the
commutative subalgebra $\sO_{L\otimes K}$ in the endomorphism
algebra of $f$:
$$
(E,\theta)_0=\bigoplus_{\phi\in \Phi}(E_\phi,\theta_\phi)\oplus
(E_{\bar \phi},\theta_{\bar \phi}).
$$
Theorem 7.3 (1) \cite{SZZ} shows that
$$
Gr_{Fil^\cdot}\circ C_0^{-1}(E_\phi,\theta_\phi)=(E_{\sigma
\phi},\theta_{\sigma \phi}),
$$
where $\sigma$ is the Frobenius element in the Galois group. Thus if
the orbit of a $\phi$ under the $\sigma$-action has more than one
element, it holds that
$$
Gr_{Fil^\cdot}\circ C_0^{-1}(E_\phi,\theta_\phi)\neq
(E_\phi,\theta_\phi).
$$
\end{example}
This example leads us to introduce the following
\begin{definition}[Periodic Higgs subbundles over $k$]\label{periodic Higgs subbundles in char p}
Let $(E,\theta)$ be as above. A Higgs subbundle $(G,\theta)$ in
$(E,\theta)$ is said to be periodic if there is a natural number $r$
such that the equality
$$
(Gr_{Fil^{\cdot}}\circ C_0^{-1})^{r}(G,\theta)=(G,\theta)
$$
holds. The least natural number satisfying the equality is called
the period of $(G,\theta)$.
\end{definition}
Clearly, a periodic Higgs subbundle is a subsystem of Hodge bundles, that is, $$G=\oplus_iG^{i,n-i},\quad \textrm{with}\quad G^{i,n-i}=G\cap E^{i,n-i}.$$ An $(E_\phi,\theta_\phi)$ in the above example, as well as its bar counterpart, is a periodic Higgs subbundle. As the algebraic cycles given by elements of $\sO_{L\otimes K}$ are Tate cycles, it
decomposes the \'{e}tale local system $\V\otimes k$ accordingly. It
is not difficult to arrange a natural one to one correspondence
between eigen-components in $\V\otimes k$ and those in
$(E,\theta)_0$. Naturally, one may wonder if one could deduce the
correspondence without reference to the action of algebraic cycles
but rather relying only on the notion of periodic Higgs subbundles.
Our result shows that it is indeed the case and thereby gives an
answer to the original question.
\begin{theorem}\label{char p periodic case}
Notation as above. Then there exists a one to one correspondence
between the set of $\F_{p^r}$-subrepresentations in $\V\otimes
\F_{p^r}$ and the set of periodic Higgs subbundles in $(E,\theta)$
whose periods are divisors of $r$. Moreover, the
$\sigma$-conjugation on the representation side corresponds to the
operator $Gr_{Fil^\cdot}\circ C_0^{-1}$ on the Higgs side.
\end{theorem}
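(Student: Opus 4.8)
The plan is to set up a ``Higgs--de Rham flow'' in characteristic $p$ and to extract from it a dictionary between periodic Higgs subbundles and subrepresentations. Starting from the de Rham bundle $(M,Fil^\cdot,\nabla)$ with associated Higgs bundle $(E,\theta)=Gr_{Fil^\cdot}(M,\nabla)$, I would iterate the two operations: apply $C_0^{-1}$ to a Higgs bundle to get a de Rham bundle, then regrade via $Gr_{Fil^\cdot}$ to get a Higgs bundle again. The composite $\tau:=Gr_{Fil^\cdot}\circ C_0^{-1}$ acts on the set of Higgs subbundles of $(E,\theta)$ (this is the ``basic observation'' recorded before Definition~\ref{periodic Higgs subbundles in char p}), and a Higgs subbundle is periodic of period dividing $r$ precisely when it is a fixed point of $\tau^r$. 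The first step is thus to make the interplay between $\tau$-periodicity and the operator $\tilde\Phi$ from \S\ref{MF category} precise: I would show that for the non-$p$-torsion lift, or rather for the mod $p$ object, the $\tilde\Phi$-stable de Rham subbundles of $(M,\nabla)$ are exactly the de Rham subbundles $(M',\nabla)$ arising as $C_0^{-1}(G,\theta)$ for a $\tau$-periodic Higgs subbundle $(G,\theta)$, with $Fil^\cdot M'$ the induced filtration. The key point here is that $C_0^{-1}$, being an equivalence between Higgs modules with nilpotent $p$-curvature and flat bundles with nilpotent-of-level-$<p$ $p$-curvature (Ogus--Vologodsky), identifies the lattice of Higgs subbundles with the lattice of flat subbundles, and under this identification the operation ``intersect with $Fil^iM$ and apply $\Phi/p^i$'' on the de Rham side matches the grading-and-$C_0^{-1}$ loop on the Higgs side.

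Next, I would descend everything to $\F_{p^r}$. To a Higgs subbundle $(G,\theta)$ of period dividing $r$, the chain $(G,\theta)\to C_0^{-1}(G,\theta)\to \tau(G,\theta)\to\cdots$ has length $r$ and returns to the start; gluing the $r$ de Rham pieces produces a single de Rham subbundle $\tilde M'$ of $M\otimes_{\F_p}\F_{p^r}$ (equivalently, an object over $X_0$ base-changed to the degree-$r$ unramified extension) which is stable under the $r$-th iterate of $\tilde\Phi$, i.e.\ a $\tilde\Phi$-stable de Rham subbundle in the $\F_{p^r}$-linear sense. By (the mod $p$, $\F_{p^r}$-coefficient version of) Theorem~\ref{Faltings theorem}, such subbundles are in bijection with $\F_{p^r}$-subrepresentations of $\mathbf{D}^t$ of the glued object, which is $\V\otimes_{\F_p}\F_{p^r}$. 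Running the correspondence in the other direction: given an $\F_{p^r}$-subrepresentation, Faltings' functor yields a $\tilde\Phi$-stable de Rham subbundle over the degree-$r$ extension; unwinding the $\F_{p^r}$-structure into $r$ successive de Rham pieces over $X_0$ and applying $Gr_{Fil^\cdot}$ at each stage produces a $\tau$-periodic Higgs subbundle whose period divides $r$. The two constructions are mutually inverse because $C_0^{-1}$ and $Gr_{Fil^\cdot}$ are, at each stage, inverse up to the equivalences involved, and the filtered-freeness condition (i) in the definition of $\tilde\Phi$-stability is exactly what guarantees that $Gr_{Fil^\cdot}$ of a de Rham subbundle is again a genuine subbundle (a subsystem of Hodge bundles).

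For the last sentence of the theorem, the $\sigma$-conjugation on $\V\otimes\F_{p^r}=\V\otimes_{\F_p}\F_{p^r}$ is the map $v\otimes\lambda\mapsto v\otimes\lambda^p$; under Faltings' equivalence this corresponds to the Frobenius-semilinear automorphism of the degree-$r$ unramified cover, which on the de Rham side permutes the $r$ pieces of the glued object by one step, and hence on the Higgs side, after grading, is precisely the shift operator $\tau=Gr_{Fil^\cdot}\circ C_0^{-1}$. So I would verify the compatibility $\mathbf{D}^t\circ C_0^{-1}(\text{shift})=\sigma\circ\mathbf{D}^t$ at the level of the constituent pieces and then conclude by functoriality. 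Finally, Theorem~\ref{irreducibility implies stability} drops out by taking $r=1$: a period-one Higgs subbundle is a $\tau$-fixed Higgs subbundle with trivial Chern classes, corresponding to an $\F_p$-subrepresentation of $\V\otimes k$, so $(E,\theta)_0$ is Higgs stable (no nontrivial degree-zero subbundle, using Proposition~\ref{irreducibility implies stability}'s input that all Higgs subbundles are automatically slope zero once $(E,\theta)_0$ is semistable) iff $\V\otimes k$ is irreducible.

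The main obstacle I expect is the bookkeeping needed to show that the operator $\tau=Gr_{Fil^\cdot}\circ C_0^{-1}$ genuinely lands in the set of Higgs \emph{subbundles} and that iterating it matches the $\tilde\Phi$-operator precisely on the nose, including the role of the $W_2$-lifting $X_1'$ and the fact that $C_0^{-1}$ depends on that lift; this is the content hinted at in the footnote about the appendix. In particular, one must check that the filtration produced after one loop is again filtered-free (so that the next application of $C_0^{-1}$ is legitimate) and that the natural comparison between ``do the Faltings functor after gluing $r$ pieces'' and ``glue the $r$ Faltings functors of the pieces'' holds with the right Tate twists --- the twist by $\Z_p(n)$ in the definition of $\mathbf{D}^t$ has to be tracked through the grading operation so that the final bijection is canonical rather than only natural up to a line.
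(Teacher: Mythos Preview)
Your proposal is correct and follows essentially the same route as the paper: the period-one case is settled via $\tilde\Phi$-stability (Proposition \ref{basic property of C_0^{-1}} and Corollary \ref{fixed char p case}), and the general $r$ is reduced to it by embedding the orbit $\{(G,\theta),\tau(G,\theta),\ldots,\tau^{r-1}(G,\theta)\}$ as a period-one subbundle of $(E,\theta)^{\oplus r}=(E,\theta)\otimes_{\F_p}\F_{p^r}$, then using the eigendecomposition of the multiplication-by-$\xi$ endomorphism $s_{MF}$ on $M^{\oplus r}$ (Lemmas \ref{lemma on elementary number theory}--\ref{easy lemma for beta}) to identify the resulting $\tilde\Phi$-stable de Rham subbundles with the $s_{MF}$-invariant ones, hence with $\F_{p^r}$-subrepresentations. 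Your ``gluing the $r$ de Rham pieces'' is exactly this eigendecomposition written in different language, and the $\sigma$-conjugation statement is handled as you sketch, by observing that the shift $\xi\mapsto\xi^\sigma$ cyclically permutes the eigenspaces by one step.
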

\begin{remark}\label{remark on quasiperiodic}
A Higgs subbundle $(G,\theta)$ is said to be \emph{quasi-periodic}
if the following equality for a pair $(r,s)$ of integers with
$r>s\geq 0$ holds:
$$
(Gr_{Fil^{\cdot}}\circ C_0^{-1})^r(G,\theta)=(Gr_{Fil^{\cdot}}\circ
C_0^{-1})^s(G,\theta).
$$
It is shown in the proof of Theorem 4.17 (4) in \cite{OV} that for
any nilpotent Higgs bundle $G$ with level less than or equal to
$p-1$,
$$
[C_0^{-1}(G)]=[F_X^*(G)],
$$
where $[\ ]$ denotes the class of a coherent $\sO_X$-module in
$K_0(X)$. The equality implies that a quasi-periodic Higgs subbundle
has trivial Chern classes. Conversely, a Higgs subbundle
$(G,\theta)$ with trivial Chern classes ought to be quasi-periodic.
The reason for it is given in the proof of the next theorem.
Clearly, a periodic Higgs subbundle is quasi-periodic. However, we
are lack of a good criterion to guarantee the injectivity of the
operator $Gr_{Fil^\cdot}\circ C_0^{-1}$ which is equivalent to that
any quasi-periodic Higgs subbundle is indeed periodic.
\end{remark}
Now we proceed to deduce Theorem \ref{irreducibility implies
stability} from Theorem \ref{char p periodic case}.
\begin{proof}
First of all, we draw a simple property for a periodic Higgs
subbundle.
\begin{proposition}\label{perodic implies Higgs semistable of slope
zero} A periodic Higgs subbundle is Higgs semistable of slope zero.
\end{proposition}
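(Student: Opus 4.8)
The plan is to exploit two facts established (or recalled) earlier: that $(E,\theta)_0$ is Higgs semistable of slope zero (Proposition 0.2 of \cite{SXZ}, cited above), and the $K_0$-identity from Remark \ref{remark on quasiperiodic}, namely $[C_0^{-1}(G)]=[F_X^*(G)]$ for a nilpotent Higgs bundle $G$ of level $\leq p-1$. Let $(G,\theta)\subset (E,\theta)$ be a periodic Higgs subbundle of period $r$, so that $(Gr_{Fil^\cdot}\circ C_0^{-1})^{r}(G,\theta)=(G,\theta)$. First I would observe that every intermediate iterate $(G_j,\theta_j):=(Gr_{Fil^\cdot}\circ C_0^{-1})^{j}(G,\theta)$ is again a Higgs subbundle of $(E,\theta)$ of the same rank as $G$ (the operator preserves rank, being built from a pull-back along a finite flat morphism followed by taking the associated graded of an induced filtration); in particular $G_r=G$.

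Next I would compute the degree. Taking $\mu_{Z_0}$-degrees (equivalently, intersecting with the fixed curve $Z_0$), the $K_0$-identity gives $\deg C_0^{-1}(G_j)=\deg F_X^*(G_j)=p\cdot\deg G_j$, since $F_X$ on $X_0$ has degree $p$ in each coordinate and pulls a divisor class back to $p$ times itself along $Z_0$. Moreover taking $Gr_{Fil^\cdot}$ does not change the degree, as a filtered bundle and its associated graded have the same class in $K_0$. Hence $\deg G_{j+1}=p\deg G_j$ for all $j$, and iterating, $\deg G = \deg G_r = p^{r}\deg G$. Since $p^r>1$ this forces $\deg G=0$, i.e. $\mu_{Z_0}(G)=0$: the Higgs subbundle $(G,\theta)$ has slope zero.

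Finally, semistability of $(G,\theta)$ is inherited from that of the ambient bundle: any Higgs subsheaf $(H,\theta)\subset (G,\theta)$ is also a Higgs subsheaf of $(E,\theta)$, so $\mu_{Z_0}(H)\leq \mu_{Z_0}(E)=0=\mu_{Z_0}(G)$ by the semistability of $(E,\theta)_0$. Thus $(G,\theta)$ is Higgs semistable of slope zero, as claimed.

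The only genuinely delicate point is the degree computation $\deg C_0^{-1}(G_j)=p\,\deg G_j$: it relies on the hypothesis $n\leq p-2$ (so that the Higgs bundle $G_j$, being a sub-system of Hodge bundles inside $(E,\theta)$, is nilpotent of level $\leq n\leq p-1$ and the Ogus--Vologodsky identity of Remark \ref{remark on quasiperiodic} applies), and on the fact that the inverse Cartier transform of a Higgs \emph{subbundle} is again a subbundle — the subtlety flagged in the footnote and deferred to the appendix. Granting those, the argument above is essentially a one-line degree bookkeeping plus the standard fact that subobjects of a semistable slope-zero object with the same slope are semistable.
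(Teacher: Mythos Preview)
Your proposal is correct and follows essentially the same approach as the paper. Both arguments reduce to the slope formula $\mu(Gr_{Fil^{\cdot}}\circ C_0^{-1}(G,\theta))=p\,\mu(G)$ (which you derive from the $K_0$-identity in Remark~\ref{remark on quasiperiodic}, while the paper cites it as Lemma~3.2 of \cite{SXZ}), then use periodicity to force $\mu(G)=0$, and conclude semistability from the ambient semistability of $(E,\theta)_0$.
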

\begin{proof}
For a Higgs subbundle $(G,\theta)\subset (E,\theta)$, it follows
from Lemma 3.2 \cite{SXZ} that
$$
\mu(Gr_{Fil^{\cdot}}\circ C_0^{-1}(G,\theta))=p\mu(G).
$$
Therefore, $\mu(G)=0$ because of the periodicity of $G$. By
Proposition 0.2 loc. cit., $(E,\theta)$ is Higgs semistable of slope
zero. Then the statement follows by noting that a Higgs subbundle of
$(G,\theta)$ is also a Higgs subbundle of $(E,\theta)$.
\end{proof}
Now assume first that $(E,\theta)$ is stable. If $\V\otimes k$ was
not irreducible, then $\V\otimes \F_{p^r}$ is reducible for some
$r\in \N$. It follows from Theorem \ref{char p periodic case} that
there is a nontrivial proper periodic Higgs subbundle in
$(E,\theta)$, which contradicts the assumption by the last proposition. Thus
$\V\otimes k$ is irreducible. Conversely, assume $\V\otimes k$ is
irreducible. If $(E,\theta)$ was not stable, then there is a
nontrivial proper Higgs subbundle $(G,\theta)\subset (E,\theta)$ of
slope zero. Note that the operator $Gr_{Fil^\cdot}\circ C_0^{-1}$
does not change the slope, rank and definition field of
$(G,\theta)$. Since there are only finitely many Higgs subbundles of
$(E,\theta)$ with the same slope, rank and definition field as
$(G,\theta)$, $(G,\theta)$ ought to be quasi-periodic. Take a pair
$(r,s)$ for $(G,\theta)$. Then the Higgs subbundle
$(Gr_{Fil^\cdot}\circ C_0^{-1})^s(G,\theta)$ is periodic. By Theorem
\ref{char p periodic case}, it follows that $\V\otimes \F_{p^r}$ is
reducible which contradicts the assumption. This completes the
proof.
\end{proof}
In \S\ref{further applications} one finds more generalizations of
the above result obtained as consequences of Theorem \ref{char p
periodic case}. In the remaining paragraph we shall concentrate on
the proof of Theorem \ref{char p periodic case}. The key is to
notice a basic property of the inverse Cartier transform in the
current setup.
\begin{proposition}\label{basic property of C_0^{-1}}
Let $(G,\theta)$ be a Higgs subbundle of $(E,\theta)$. If
$$
Gr_{Fil^{\cdot}}\circ C_0^{-1}(G,\theta)=(G,\theta)
$$
is satisfied, then $C_0^{-1}(G,\theta)$ is $\tilde \Phi$-stable, and hence corresponds to a subrepresentation of $\V$ by Theorem \ref{Faltings theorem}.
\end{proposition}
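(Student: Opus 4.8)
The plan is to verify directly that $M':=C_0^{-1}(G,\theta)$ satisfies the two conditions defining a $\tilde\Phi$-stable de Rham subbundle of $(M,\nabla)$, whereupon Theorem~\ref{Faltings theorem} supplies the corresponding subrepresentation of $\V$. The starting point is the clarification promised in the footnote (to be carried out in the appendix): the inverse Cartier transform identifies $C_0^{-1}(E,\theta)$ with $(M,\nabla)$, functorially in $\theta$-stable subbundles, so $M'$ is a $\nabla$-stable subbundle of $M$ with locally free quotient $M/M'\cong C_0^{-1}(E/G)$. One also needs the local shape of this identification. Since $pM=0$, the Fontaine--Laffaille datum on $M$ is given by the partial Frobenii $\phi_i\colon F_{\hat U}^*Fil^iM\to M$, and the relation $\phi_i|_{F_{\hat U}^*Fil^{i+1}M}=p\,\phi_{i+1}$, which vanishes as $pM=0$, shows each $\phi_i$ factors through $F_{\hat U}^*E^{i,n-i}$; by the strong $p$-divisibility axiom their sum $\bar\Phi_{F_{\hat U}}:=\oplus_i\phi_i\colon F_{\hat U}^*E\to M$ is surjective, hence an isomorphism for rank reasons, and it is precisely the local trivialization of $C_0^{-1}(E,\theta)\cong(M,\nabla)$ over $(U,F_{\hat U})$. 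In particular, for every $\theta$-stable subbundle $\bar G=\oplus_i\bar G^{i,n-i}\subseteq E$ one has $C_0^{-1}(\bar G)=\bar\Phi_{F_{\hat U}}(F_{\hat U}^*\bar G)$ as a subbundle of $M$.

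Granting this, condition (i) is immediate: by hypothesis $Gr^iM'=G\cap E^{i,n-i}=G^{i,n-i}$ is locally free for each $i$, so the induced filtration $Fil^\cdot M'$ (with $Fil^iM'=Fil^iM\cap M'$) has locally free graded pieces and is therefore locally filtered free. For condition (ii), fix $U$ and abbreviate $F=F_{\hat U}$. Since $\phi_i$ vanishes on $F^*Fil^{i+1}M\supseteq F^*Fil^{i+1}M'$, we get $\phi_i(F^*Fil^iM')=\phi_i(F^*Gr^iM')=\phi_i(F^*G^{i,n-i})$, the last step being the periodicity hypothesis. Summing over $i$ and inserting the local description of $C_0^{-1}$,
$$
\tilde\Phi_{F}(M')=\sum_{i=0}^{n}\phi_i(F^*Gr^iM')=\bar\Phi_{F}\bigl(F^*\bar G\bigr)=C_0^{-1}(\bar G),\qquad \bar G:=(Gr_{Fil^\cdot}\circ C_0^{-1})(G,\theta),
$$
where $\bar G=Gr_{Fil^\cdot}(M',\nabla)$ is $\theta$-stable because $M'$ is $\nabla$-stable. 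Applying the periodicity hypothesis once more gives $\bar G=(G,\theta)$, hence $\tilde\Phi_{F}(M')=C_0^{-1}(G,\theta)=M'$. By the Taylor formula recorded in \S\ref{MF category} this equality is independent of the chosen Frobenius lift, so $M'$ is a $\tilde\Phi$-stable de Rham subbundle of $(M,\nabla)$, i.e.\ a subobject of $M$ in $\mathcal{MF}^\nabla$, and Theorem~\ref{Faltings theorem} produces the desired subrepresentation of $\V$.

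The step I expect to be the real obstacle is the local identification used above: that the trivialization $\bar\Phi_{F_{\hat U}}\colon F_{\hat U}^*E\to M$ coming from the Fontaine--Laffaille $\Phi$-structure coincides with the one coming from Ogus--Vologodsky's local formula for $C_0^{-1}$ over the $W_2$-lift $X_1'$ (a twist of $F_{\hat U}^*(-)$ whose connection is the canonical one corrected by $\theta$ and by $dF_{\hat U}/p$), and that these glue along the Taylor cocycle as $F_{\hat U}$ varies, so that $C_0^{-1}(\bar G)=\bar\Phi_{F_{\hat U}}(F_{\hat U}^*\bar G)$ for all $\theta$-stable $\bar G$. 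This is exactly what the footnote defers to the appendix; once it is in hand the computation above is formal. Finally, I would remark that the chain of equalities in the display is reversible, so it in fact yields the equivalence of the $\tilde\Phi$-stability of $C_0^{-1}(G,\theta)$ with the one-periodicity $(Gr_{Fil^\cdot}\circ C_0^{-1})(G,\theta)=(G,\theta)$, which is the form needed for the full correspondence in Theorem~\ref{char p periodic case}.
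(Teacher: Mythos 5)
Your proof is correct and follows essentially the same route as the paper's: both rest on the local expression $C_0^{-1}(G,\theta)|_{U_0}=\Span[\frac{\Phi_{F_{\hat U}}}{p^i}(F_{\hat U}^*\tilde g^{i,n-i})]$ for lifts $\tilde g^{i,n-i}\in Fil^iM$ of a basis of $G^{i,n-i}$ (the content deferred to the appendix), together with the observation that $\frac{\Phi}{p^i}$ kills $F^*Fil^{i+1}M$ when $pM=0$, so that the period-one hypothesis $Gr_{Fil^\cdot}(M')=(G,\theta)$ turns the defining formula for $\tilde\Phi(M')$ into that local expression and yields $\tilde\Phi(M')=M'$. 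The paper phrases this by choosing the lifts inside $C_0^{-1}(G,\theta)$ so that they form an adapted basis, while you compute $\tilde\Phi(M')$ directly via the partial Frobenii, but the content is identical.
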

\begin{proof}
The question is local. For each small affine $U\subset X$, one can
express $C_0^{-1}(G,\theta)$ locally as follows: take a local basis
$\{g^{i,n-i}\}$ of $G^{i,n-i}$ over $U_0$, and then a set of
elements $\{\tilde g^{i,n-i}\}$ in $M$ with $\tilde g^{i,n-i}\in
Fil^iH$ satisfying the condition
$$
\tilde g^{i,n-i} \mod Fil^{i+1}M=g^{i,n-i}, \ 0\leq i\leq n.
$$
Then it holds that
$$
C_0^{-1}(G,\theta)_{U_0}=\Span[\frac{\Phi_{F_{\hat U}}}{p^i}(F_{\hat
U}^*\tilde g^{i,n-i}), \ 0\leq i\leq n].
$$
The period one property for $(G,\theta)$ implies that we can take
$\{\tilde g^{i,n-i}\}$ to be a local basis of $C_0^{-1}(G,\theta)$.
Thus the $\tilde \Phi$-stability of $C_0^{-1}(G,\theta)$ is just the
local expression of $C_0^{-1}$ as above.
\end{proof}
A direct consequence of the previous proposition is the special case
$r=1$ of Theorem \ref{char p periodic case}.
\begin{corollary}\label{fixed char p case}
Let $\sV$, $\sH$ and $\sE$ be the set of subrepresentations of $\V$,
$\tilde \Phi$-stable de Rham subbundles of $M$ and periodic Higgs
subbundles of $(E,\theta)$ of period one respectively. Then there
are one to one correspondences between the sets:
$$
\sV\xleftrightharpoons[\ \mathbf E^t\ ]{\ \mathbf D^t\ }
 \sH
\xleftrightharpoons[Gr_{Fil^\cdot}]{C_0^{-1}}\sE.
$$
\end{corollary}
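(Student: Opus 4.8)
The first correspondence $\sV \xleftrightharpoons{} \sH$ is immediate: by Theorem \ref{Faltings theorem} the functor $\mathbf{D}^t$ (equivalently its quasi-inverse $\mathbf{E}^t$ on crystalline sheaves) is fully faithful with image closed under subobjects and quotients, and a subobject of $M$ in $\mathcal{MF}^\nabla$ is by definition precisely a $\tilde\Phi$-stable de Rham subbundle of $(M,\nabla)$; so sub-representations of $\V$ match $\tilde\Phi$-stable de Rham subbundles bijectively. Thus the real content is the second correspondence $\sH \xleftrightharpoons[Gr_{Fil^\cdot}]{C_0^{-1}} \sE$, i.e. that the assignments $(M',\nabla)\mapsto Gr_{Fil^\cdot}(M',\nabla)$ and $(G,\theta)\mapsto C_0^{-1}(G,\theta)$ are mutually inverse bijections between $\sH$ and $\sE$.

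I would set this up in two directions. First, given a $\tilde\Phi$-stable de Rham subbundle $(M',\nabla)\subset (M,\nabla)$, I need to check that $(G,\theta):=Gr_{Fil^\cdot}(M',\nabla)$ is a periodic Higgs subbundle of period one, i.e. that $Gr_{Fil^\cdot}\circ C_0^{-1}(G,\theta)=(G,\theta)$; the point is that $C_0^{-1}$ applied to $Gr_{Fil^\cdot}(M',\nabla)$ recovers exactly $(M',\nabla)$ on the nose (not merely up to filtered isomorphism), which follows from the explicit local description of $C_0^{-1}$: on a small affine $U$ with Frobenius lift $F_{\hat U}$, one has $C_0^{-1}(G,\theta)_{U_0}=\Span[\,p^{-i}\Phi_{F_{\hat U}}(F_{\hat U}^*\tilde g^{i,n-i})\,]$ where the $\tilde g^{i,n-i}$ are lifts of a basis of $G^{i,n-i}$ inside $Fil^iM'$, and $\tilde\Phi$-stability of $M'$ says exactly that these spans fill out $M'$; taking $Gr_{Fil^\cdot}$ then returns $(G,\theta)$. (This is the computation behind Proposition \ref{basic property of C_0^{-1}}, read in reverse.) Conversely, given a periodic Higgs subbundle $(G,\theta)$ of period one, Proposition \ref{basic property of C_0^{-1}} shows $C_0^{-1}(G,\theta)$ is $\tilde\Phi$-stable, hence lies in $\sH$, and applying $Gr_{Fil^\cdot}$ recovers $(G,\theta)$ by the definition of periodicity of period one. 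So the two maps are well-defined between $\sH$ and $\sE$ and compose to the identity in both orders.

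The one subtlety I would be careful about — and which I expect to be the main obstacle — is the compatibility of the two uses of the inverse Cartier transform: $C_0^{-1}$ produces a priori a flat bundle (an object with connection), and I must know that this flat bundle, when equipped with the filtration coming from the Hodge/conjugate filtration and the induced Frobenius structure, is genuinely a subobject in $\mathcal{MF}^\nabla$ and not merely a de Rham sub-bundle — in other words that the "inverse Cartier transform" on Higgs sub-bundles of $(E,\theta)$ and the $\tilde\Phi$-operator formalism of Faltings's category are the \emph{same} construction in this $pM=0$, period-one situation. The footnote in the text flags that the statement "$C_{\sX/\sS}^{-1}$ associates a Higgs subbundle to a flat subbundle" needs clarification and is deferred to an appendix; I would invoke that appendix result here, so that the local formula for $C_0^{-1}(G,\theta)$ used above is legitimate. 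Once that identification is in hand, the filtered-freeness clause (i) in the definition of $\tilde\Phi$-stability follows because $(G,\theta)$ being a Higgs sub-bundle forces $G=\oplus_i G\cap E^{i,n-i}$, so the induced filtration on the lift $C_0^{-1}(G,\theta)$ is automatically split; and the degree/slope bookkeeping needed for consistency is Lemma 3.2 of \cite{SXZ} ($\mu(Gr_{Fil^\cdot}\circ C_0^{-1}(G,\theta))=p\,\mu(G)$, forcing $\mu(G)=0$), already used in Proposition \ref{perodic implies Higgs semistable of slope zero}. Assembling these pieces gives the two bijections, hence the corollary.
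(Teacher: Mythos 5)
Your proposal is correct and follows essentially the same route as the paper: the $\sV\leftrightarrow\sH$ bijection is Theorem \ref{Faltings theorem} plus the definition of a subobject, and the $\sH\leftrightarrow\sE$ bijection comes from reading the $\tilde\Phi$-stability condition as the local formula for $C_0^{-1}$ (so that taking $Gr_{Fil^\cdot}$ of it gives the period-one condition) in one direction, and from Proposition \ref{basic property of C_0^{-1}} in the other. Your extra care about identifying the Ogus--Vologodsky transform with the local $\Phi/p^i$-span formula is exactly what the paper defers to its appendix via the footnote, so nothing is missing.
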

\begin{proof}
Theorem \ref{Faltings theorem} settles the correspondence between
$\sV$ and $\sH$. It is to show the correspondence between $\sH$ and
$\sE$. Note first that the locally filtered freeness of $M'\subset
M$ is equivalent to the locally freeness of the grading
$Gr_{Fil^{\cdot}}M'$. If a de Rham subbundle $M'$ is further $\tilde
\Phi$-stable, its grading $Gr_{Fil^{\cdot}}M'$ satisfies the period
one condition:
$$
Gr_{Fil^{\cdot}}\circ
C_0^{-1}(Gr_{Fil^{\cdot}}M')=Gr_{Fil^{\cdot}}(M'),
$$
which follows by taking the grading of the $\tilde \Phi$-stability condition. The converse direction is just Proposition \ref{basic property of C_0^{-1}}.
\end{proof}
The proof of Theorem \ref{char p periodic case} for a general $r$ will be reduced to the above case. The main idea is as follows: for a periodic Higgs subbundle $(G,\theta)$ of period $r$, we embed the Higgs subbundles $\{(Gr_{Fil^\cdot}\circ C_0^{-1})^i(G,\theta)\}_{0\leq i\leq r-1}$ into $(E,\theta)^{\oplus r}$ in a suitable way such that the image is periodic of period one. The above corollary gives us then an $\F_p$-subrepresentation $\W \subset \V^{\oplus r}$. Considering $\F_{p^r}$ as trivial representation and forgetting its $\F_p$-algebra structure, one obtains a natural identification of $\F_p$-representations $$
\V^{\oplus r}=\V\otimes_{\F_p}\F_{p^r}.
$$
With this identification in hand, we show further that $\W\subset \V\otimes_{\F_p}\F_{p^r}$ is indeed stable under the multiplication of elements in $\F_{p^r}$ and hence naturally a $\F_{p^r}$-subrepresentation of $\V\otimes_{\F_p}\F_{p^r}$. We give first several preparatory lemmas.\\

For $r\in \N$, we fix a generator $\xi$ of $\F_{p^r}$ as
$\F_p$-algebra with its minimal polynomial $P(t)$. Recall that
$$\mathbf{D}^t(M,\nabla,Fil^{\cdot},\Phi)=\V.$$ Regarding $\F_{p^r}$ as
trivial representation, it holds clearly that
$$
\mathbf{D}^t((M,\nabla,Fil^{\cdot},\Phi)^{\oplus r})=\V\otimes
_{\F_p}\F_{p^r}.
$$
By using the $\F_p$-basis $\{1,\xi,\cdots,\xi^{r-1}\}$ of
$\F_{p^r}$, we label the $r$ copies of $(M,\nabla,Fil^{\cdot},\Phi)$
as
$$\{(^iH,^i\nabla,^iFil,^i\Phi)\}_{0\leq i\leq r-1}.$$
We observe that the map $s$ on $\V\otimes \F_{p^r}$, induced by
multiplication with $\xi$ on the tensor factor $\F_{p^r}$, is an
endomorphism in the category of crystalline sheaves, and hence by
the equivalence of categories corresponds to an endomorphism
$s_{MF}$ on $M^{\oplus r}$ in the category $\mathcal{MF}^\nabla$ with the
minimal polynomial $P(t)$. As $\F_{p^r}\subset k\subset \sO_{X_0}$,
the endomorphism $s_{MF}$ decomposes $\oplus_{i=0}^{r-1}(^iH)$ into
a direct sum of eigenspaces. We need to describe the
eigen-decomposition in an explicit way which will be applied in our
reduction step. For sake of completeness, the proof of the following
simple lemma in linear algebra is supplied.
\begin{lemma}\label{lemma on elementary number theory}
Let $A\in \GL_r(\F_p)$ be the representation matrix of the
$\F_p$-linear map $m_{\xi}: \F_{p^r}\to \F_{p^r}$ under the basis
$\{1,\xi,\cdots, \xi^{r-1}\}$. Then there is an invertible matrix
$$S=(S_1,S_1^{\sigma},\cdots,S_1^{\sigma^{r-1}})\in \GL_r(\F_{p^r})$$
such that
$$
S^{-1}AS=\mathrm{diag}\{\xi,\xi^{\sigma},
\cdots,\xi^{\sigma^{r-1}}\},
$$
where $S_1$ is the first column vector of $S$ and $\sigma\in
Gal(\Q_{p^r}|\Q_p)$ is the Frobenius element.
\end{lemma}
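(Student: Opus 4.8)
The plan is to recognize that $A$ is the companion matrix of $P(t)$ and to exploit two facts: the spectrum of $A$ over $\F_{p^r}$ is simple, and $A$ has entries in $\F_p$, hence commutes with the coordinatewise action of $\sigma$. First, since $\xi$ generates $\F_{p^r}$ over $\F_p$, its minimal polynomial $P(t)\in\F_p[t]$ has degree exactly $r$ and is the characteristic polynomial of the companion matrix $A$ of $m_\xi$ in the power basis $\{1,\xi,\dots,\xi^{r-1}\}$. The roots of $P$ in an algebraic closure are precisely the Galois conjugates $\xi,\xi^{\sigma}=\xi^{p},\dots,\xi^{\sigma^{r-1}}=\xi^{p^{r-1}}$, which are pairwise distinct because $\xi$ has exact degree $r$, and all lie in $\F_{p^r}$. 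Hence $A$ is diagonalizable over $\F_{p^r}$ with simple spectrum $\{\xi^{\sigma^{j}}\}_{0\le j\le r-1}$, every eigenspace being a line.

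Next, because the entries of $A$ lie in $\F_p$, applying $\sigma$ coordinatewise to vectors over $\F_{p^r}$ gives $Av^{\sigma}=(Av)^{\sigma}$; therefore, if $v$ is an eigenvector of $A$ with eigenvalue $\lambda$, then $v^{\sigma}$ is an eigenvector with eigenvalue $\lambda^{\sigma}$. Now pick any eigenvector $S_1$ of $A$ for the eigenvalue $\xi$ (concretely, solving $Aw=\xi w$ by back-substitution produces one whose coordinates are polynomials in $\xi$ with $\F_p$-coefficients). Then for each $j$ the vector $S_1^{\sigma^{j}}$ is an eigenvector for $\xi^{\sigma^{j}}$, and since that eigenspace is one–dimensional it is spanned by $S_1^{\sigma^{j}}$. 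Setting $S:=(S_1,S_1^{\sigma},\dots,S_1^{\sigma^{r-1}})$, its columns are eigenvectors of $A$ attached to the pairwise distinct eigenvalues $\xi,\xi^{\sigma},\dots,\xi^{\sigma^{r-1}}$, hence $\F_{p^r}$-linearly independent; thus $S\in\GL_r(\F_{p^r})$ and $S^{-1}AS=\mathrm{diag}\{\xi,\xi^{\sigma},\dots,\xi^{\sigma^{r-1}}\}$, as asserted.

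I do not anticipate a genuine obstacle. The only place the hypothesis that $\xi$ generates $\F_{p^r}/\F_p$ is used is the simplicity of the spectrum, and this is exactly what forces each conjugated eigenvector $S_1^{\sigma^{j}}$ to land back in the $\xi^{\sigma^{j}}$-eigenline, so that a single eigenvector $S_1$ for $\xi$ together with its $\sigma$-powers already gives a full diagonalizing basis of the required shape $(S_1,S_1^{\sigma},\dots,S_1^{\sigma^{r-1}})$. One could equally phrase the argument via the classical diagonalization of a companion matrix by a Vandermonde-type matrix, the new ingredient being merely the observation that the standard eigenvectors depend polynomially over $\F_p$ on the eigenvalue, which is what makes the $\sigma$-columns of $S$ Galois-conjugate to the first.
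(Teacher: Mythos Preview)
Your proof is correct and follows essentially the same approach as the paper: pick an eigenvector $S_1$ for $\xi$, use that $A$ has $\F_p$-entries so that applying $\sigma$ coordinatewise to the eigenvalue equation produces eigenvectors $S_1^{\sigma^j}$ for the conjugate eigenvalues $\xi^{\sigma^j}$, and conclude invertibility of $S$ from the distinctness of the eigenvalues. Your version is slightly more explicit about why the spectrum is simple and why this forces linear independence, but the argument is the same.
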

\begin{proof}
As $P(t)\in \F_p[t]$ splits over $\F_{p^r}$ into the product
$\prod_{i=0}^{r-1}(t-\xi^{\sigma^i})$ of linear factors, one has a
basis of eigenvectors of $$m_{\xi}\otimes id:
\F_{p^r}\otimes_{\F_p}\F_{p^r}\to \F_{p^r}\otimes_{\F_p}\F_{p^r}.$$
Pick an eigenvector $S_1$ to the eigenvalue $\xi$, which namely
satisfies the equality $AS_1=\xi S_1$ holds. Applying $\sigma^i$ on
both sides, one obtains then
$$AS_1^{\sigma^i}=\xi^{\sigma^{i}}S_1^{\sigma^i}.$$ So the matrix
$S=(S_1,\cdots,S_1^{\sigma^{r-1}})$ satisfies
$$
AS=S\mathrm{diag}\{\xi,\xi^{\sigma}, \cdots,\xi^{\sigma^{r-1}}\}.
$$
Note that $\{S_1^{\sigma^i}\}_{0\leq i\leq r-1}$ makes a basis of
eigenvectors of $m_{\xi}\otimes 1$ and hence $S$ is invertible.
\end{proof}

\begin{lemma} \label{lemma on eigendecomposition of s_MF}
The endomorphism $s_{MF}$ decomposes $\oplus_{i=0}^{r-1}\ ^iH$ into
direct sum $\oplus_{i=0}^{r-1}M^i$ of eigenspaces such that
\begin{itemize}
    \item [(i)] one has an explicit isomorphism
$$
(M,\nabla,Fil^{\cdot})\stackrel{\alpha_i}{\cong}
(M^i,\nabla^i:=\nabla^{\oplus r}|_{M^i},Fil^i:=Fil^{\oplus
r}|_{M^i}),
$$
\item [(ii)]$\phi:=\Phi^{\oplus r}$ permutes $\{M^i\}_{0\leq i\leq
r-1}$ cyclically.
\end{itemize}
\end{lemma}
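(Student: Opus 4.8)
The plan is to make the eigendecomposition of $s_{MF}$ completely explicit by transporting, through the functor $\mathbf{E}^{t}$, the linear algebra of Lemma~\ref{lemma on elementary number theory}, and then to read off (i) and (ii) from the single identity $AS=S\,\mathrm{diag}\{\xi,\xi^{\sigma},\dots,\xi^{\sigma^{r-1}}\}$. First I would observe that, under the identification $\bigoplus_{i=0}^{r-1}{}^{i}H=M^{\oplus r}$ attached to the $\F_{p}$-basis $\{1,\xi,\dots,\xi^{r-1}\}$ of $\F_{p^{r}}$, the endomorphism $s_{MF}$ is exactly multiplication by the matrix $A\in\GL_{r}(\F_{p})$ of that lemma: since $\mathbf{E}^{t}$ is additive and sends $\V$ to $M$ and $\id_{\V}$ to $\id_{M}$, it carries multiplication by $A$ on $\V^{\oplus r}=\V\otimes_{\F_{p}}\F_{p^{r}}$ to multiplication by $A$ on $M^{\oplus r}$. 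As the entries of $A$ lie in $\F_{p}$ they are fixed by the Frobenius $F_{X_{0}}$ of $X_{0}$ underlying $\Phi$ (the reduction mod $p$ of the liftings $F_{\hat U}$), so $F_{X_{0}}^{*}(s_{MF})$ is again ``multiplication by $A$'', now on $F_{X_{0}}^{*}M^{\oplus r}$, and the $\mathcal{MF}^{\nabla}$-compatibility of $s_{MF}$ takes the form $\Phi^{\oplus r}\circ F_{X_{0}}^{*}(s_{MF})=s_{MF}\circ\Phi^{\oplus r}$ with both sides ``multiplication by $A$''; moreover $s_{MF}$ is $\sO_{X_{0}}$-linear, horizontal, and filtered for $\nabla^{\oplus r}$ and $Fil^{\oplus r}$.

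Next I would write $S_{1}^{\sigma^{i}}$, $0\le i\le r-1$, for the columns of $S$, let $\alpha_{i}\colon M\to M^{\oplus r}$ be multiplication by the constant column vector $S_{1}^{\sigma^{i}}$, and set $M^{i}:=\alpha_{i}(M)$. The entries of $S_{1}^{\sigma^{i}}$ lie in $\F_{p^{r}}\subset k\subset\sO_{X_{0}}$ and are not all zero, so $\alpha_{i}$ is $\sO_{X_{0}}$-linear and injective; it is horizontal since constants are killed by the relative differential; and $\alpha_{i}(Fil^{j}M)=Fil^{j}(M^{\oplus r})\cap M^{i}$ since $Fil^{j}M$ is an $\sO_{X_{0}}$-submodule of $M$. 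From $AS_{1}^{\sigma^{i}}=\xi^{\sigma^{i}}S_{1}^{\sigma^{i}}$ one gets $s_{MF}\circ\alpha_{i}=\xi^{\sigma^{i}}\alpha_{i}$, so $M^{i}$ lies in the $\xi^{\sigma^{i}}$-eigenspace of $s_{MF}$; and since $S\in\GL_{r}(\F_{p^{r}})$, the map $\bigoplus_{i}\alpha_{i}\colon M^{\oplus r}\to M^{\oplus r}$ is the isomorphism ``multiplication by $S$'', so $M^{\oplus r}=\bigoplus_{i}M^{i}$. The eigenvalues $\xi^{\sigma^{i}}$ being pairwise distinct (as $\xi$ generates $\F_{p^{r}}$), each $M^{i}$ is precisely the $\xi^{\sigma^{i}}$-eigenspace; being eigenspaces of the $\mathcal{MF}^{\nabla}$-morphism $s_{MF}$ (whose eigenprojectors are polynomials in $s_{MF}$ with coefficients in $\F_{p^{r}}\subset\sO_{X_{0}}$), the $M^{i}$ are horizontal subbundles compatible with $Fil^{\oplus r}$, whence $\nabla^{i}=\nabla^{\oplus r}|_{M^{i}}$ and $Fil^{i}=Fil^{\oplus r}|_{M^{i}}$ are well defined and $\alpha_{i}$ is an isomorphism of filtered de Rham bundles $(M,\nabla,Fil^{\cdot})\xrightarrow{\ \sim\ }(M^{i},\nabla^{i},Fil^{i})$. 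This gives (i).

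For (ii) I would pull the eigenspaces back along $F_{X_{0}}$. Since $F_{X_{0}}^{*}$ raises the scalar $\xi^{\sigma^{i}}$ to its $p$-th power $\xi^{\sigma^{i+1}}$ (equivalently, carries the column $S_{1}^{\sigma^{i}}$ to $S_{1}^{\sigma^{i+1}}$), the submodule $F_{X_{0}}^{*}M^{i}\subset F_{X_{0}}^{*}M^{\oplus r}$ is the $\xi^{\sigma^{i+1}}$-eigenspace of $F_{X_{0}}^{*}(s_{MF})$. Because $\Phi^{\oplus r}$ is $\sO_{X_{0}}$-linear and, by the previous paragraph, intertwines the two copies of ``multiplication by $A$'', it carries that eigenspace into the $\xi^{\sigma^{i+1}}$-eigenspace of $s_{MF}$ on $M^{\oplus r}$, which is $M^{i+1}$ (indices modulo $r$, using $\sigma^{r}=\id$). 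Hence $\phi(F_{X_{0}}^{*}M^{i})\subseteq M^{i+1}$ for every $i$; since the strong $p$-divisibility of $\Phi^{\oplus r}$ distributes over the decomposition $M^{\oplus r}=\bigoplus_{i}M^{i}$, this forces $\phi$ to permute $\{M^{i}\}_{0\le i\le r-1}$ cyclically, which is (ii).

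The main obstacle will not be any individual computation --- all of them are elementary once Lemma~\ref{lemma on elementary number theory} is in hand --- but the index bookkeeping in the last step: one must keep in mind that the Frobenius built into the $\mathcal{MF}^{\nabla}$-structure acts on the constants $\F_{p^{r}}\subset\sO_{X_{0}}$ through $\sigma$, and so shifts the eigenvalue index by one. This is precisely what turns the commutation of $\Phi^{\oplus r}$ with $s_{MF}$ into a \emph{cyclic}, rather than a diagonal, action on the eigenspaces, and it is the mechanism that will allow one to reduce the period-$r$ case of Theorem~\ref{char p periodic case} to the period-one case of Corollary~\ref{fixed char p case} in the steps that follow.
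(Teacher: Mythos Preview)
Your proposal is correct and follows essentially the same approach as the paper: both define $\alpha_i$ as ``multiplication by the column vector $S_1^{\sigma^i}$'' (the paper writes this as $\alpha_i=\sum_j a_j^{\sigma^i}\triangle^j$ with $\triangle^j$ the $j$-th diagonal component), obtain $M^i=\alpha_i(M)$ as the $\xi^{\sigma^i}$-eigenspace of $s_{MF}$ from $AS_1^{\sigma^i}=\xi^{\sigma^i}S_1^{\sigma^i}$, and deduce (ii) from the Frobenius-semilinearity of $\phi$. Your treatment of (ii) is more explicit than the paper's one-line ``immediate because of the semilinearity of $\phi$'', but the underlying mechanism---that pulling back by Frobenius shifts the eigenvalue $\xi^{\sigma^i}$ to $\xi^{\sigma^{i+1}}$---is identical.
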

\begin{proof}
Let $\triangle: M\to \oplus_{i=0}^{r-1}\ ^iH$ be the diagonal
embedding with $i$-th component $\triangle^i$, and
$$
S_1=\left(%
\begin{array}{c}
  a_0 \\
  \vdots \\
  a_{r-1} \\
\end{array}%
\right).
$$
It follows from the last lemma that for $0\leq i\leq r-1$,
$$\sum_{j=0}^{r-1}a_j^{\sigma^i}\triangle^j(M)$$ is the
eigenspace of $s_{MF}$ with eigenvalue $\xi^{\sigma^i}$. Clearly,
the isomorphism of vector bundles
$$
\alpha_i=\sum_{j=0}^{r-1}a_j^{\sigma^i}\triangle^j: M\to M^i
$$
respects also the connection and the filtration. Hence (i) follows.
We have (ii) immediately because of the semilinearity of $\phi$.
\end{proof}
Set $(E^i,\theta^i)=Gr_{Fil^i}(M^i,\nabla^i)$. Then $\alpha_i$ in
the above lemma induces the isomorphism of Higgs bundles
$$
\beta_i: (E,\theta)\cong (E^i,\theta^i).
$$
The following easy lemma follows immediately from the semilinearity
of $C_0^{-1}$ and will be applied below.
\begin{lemma}\label{easy lemma for beta}
It holds for a Higgs subbundle $(G,\theta)\subset (E,\theta)$,
$$
Gr_{Fil^{\cdot}}\circ C_0^{-1}(\beta_i(G,\theta))=\beta_{i+1\mod
r}(Gr_{Fil^{\cdot}}\circ C_0^{-1}(G,\theta)), \ 0\leq i\leq r-1
$$
\end{lemma}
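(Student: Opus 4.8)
The plan is to reduce the identity to the explicit local description of the inverse Cartier transform recorded in the proof of Proposition \ref{basic property of C_0^{-1}}, applied simultaneously to $M$ and to the ambient object $M^{\oplus r}\in\mathcal{MF}^\nabla$ with relative Frobenius $\phi=\Phi^{\oplus r}$; this is legitimate because $\beta_i(G,\theta)\subset(E^i,\theta^i)$ is a Higgs subbundle of $(E,\theta)^{\oplus r}=Gr_{Fil^\cdot}(M^{\oplus r})$. Concretely, I would fix a small affine $U\in\sU$ with its Frobenius lifting $F_{\hat U}$, pick over $U_0$ local bases $\{g^{j,n-j}\}$ of the $G^{j,n-j}$ and lifts $\tilde g^{j,n-j}\in Fil^jM$ reducing to $g^{j,n-j}$ modulo $Fil^{j+1}M$. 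Since $\alpha_i$ respects the filtration (Lemma \ref{lemma on eigendecomposition of s_MF}(i)) and induces $\beta_i$ on gradings, $\alpha_i(\tilde g^{j,n-j})\in Fil^jM^{\oplus r}$ is an admissible lift of a local basis of $\beta_i(G)^{j,n-j}$, so the cited local formula reads
$$
C_0^{-1}(\beta_i(G,\theta))_{U_0}=\Span\Bigl[\tfrac{\phi_{F_{\hat U}}}{p^j}\bigl(F_{\hat U}^*\alpha_i(\tilde g^{j,n-j})\bigr),\ 0\le j\le n\Bigr],\qquad C_0^{-1}(G,\theta)_{U_0}=\Span\Bigl[\tfrac{\Phi_{F_{\hat U}}}{p^j}\bigl(F_{\hat U}^*\tilde g^{j,n-j}\bigr),\ 0\le j\le n\Bigr].
$$

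The crux is the commutation identity
$$
\tfrac{\phi_{F_{\hat U}}}{p^j}\circ F_{\hat U}^*\circ\alpha_i=\alpha_{i+1\bmod r}\circ\tfrac{\Phi_{F_{\hat U}}}{p^j}\circ F_{\hat U}^*\quad\text{on }Fil^jM,
$$
which is exactly the semilinearity of $C_0^{-1}$ referred to before the statement. To establish it I would use the formula $\alpha_i=\sum_{l=0}^{r-1}a_l^{\sigma^i}\triangle^l$ of Lemma \ref{lemma on eigendecomposition of s_MF}, where $\triangle^l$ is the $l$-th component of the diagonal embedding $M\to M^{\oplus r}$ and $a_0,\dots,a_{r-1}\in\F_{p^r}$ are the entries of the eigenvector $S_1$: each $\triangle^l$ is $\sO$-linear, hence commutes with $F_{\hat U}^*$, while $F_{\hat U}^*$ raises the constants $a_l^{\sigma^i}\in\F_{p^r}\subset k$ to their $p$-th powers, and $(a_l^{\sigma^i})^p=a_l^{\sigma^{i+1}}$ since $\sigma$ acts on $\F_{p^r}$ by $x\mapsto x^p$; as $\phi_{F_{\hat U}}=\Phi_{F_{\hat U}}^{\oplus r}$ is $\sO_{\hat U}$-linear and acts summand by summand, one gets $\phi_{F_{\hat U}}(F_{\hat U}^*\alpha_i(m))=\sum_l a_l^{\sigma^{i+1}}\triangle^l(\Phi_{F_{\hat U}}(F_{\hat U}^*m))=\alpha_{i+1\bmod r}(\Phi_{F_{\hat U}}(F_{\hat U}^*m))$, and restricting to $Fil^jM$, where division by $p^j$ is defined through strong $p$-divisibility, gives the identity. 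In particular this re-proves part (ii) of Lemma \ref{lemma on eigendecomposition of s_MF}, that $\phi_{F_{\hat U}}$ carries $F_{\hat U}^*M^i$ into $M^{i+1\bmod r}$.

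Feeding the commutation identity into the two local descriptions above and using the $\sO_{U_0}$-linearity of $\alpha_{i+1\bmod r}$ yields $C_0^{-1}(\beta_i(G,\theta))_{U_0}=\alpha_{i+1\bmod r}\bigl(C_0^{-1}(G,\theta)_{U_0}\bigr)$; letting $U$ range over $\sU$ this patches to the global equality $C_0^{-1}(\beta_i(G,\theta))=\alpha_{i+1\bmod r}\bigl(C_0^{-1}(G,\theta)\bigr)$ of de Rham subbundles. Finally, $\alpha_{i+1\bmod r}$ is an isomorphism of filtered de Rham bundles inducing $\beta_{i+1\bmod r}$ on $Gr_{Fil^\cdot}$, so applying $Gr_{Fil^\cdot}$ gives
$$
Gr_{Fil^\cdot}\circ C_0^{-1}(\beta_i(G,\theta))=\beta_{i+1\bmod r}\bigl(Gr_{Fil^\cdot}\circ C_0^{-1}(G,\theta)\bigr),
$$
which is the assertion. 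The only step I expect to demand care is the semilinearity identity of the second paragraph, namely tracking how the Frobenius pullback twists the eigen-coefficients $a_l$ by $\sigma$ and thereby shifts the index from $i$ to $i+1\bmod r$; everything else is functoriality of $Gr_{Fil^\cdot}$ and $\sO$-linearity of the divided Frobenius.
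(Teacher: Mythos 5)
Your proposal is correct and follows exactly the route the paper intends: the paper offers no written proof, stating only that the lemma ``follows immediately from the semilinearity of $C_0^{-1}$,'' and your argument is precisely that semilinearity made explicit --- the Frobenius pullback raises the eigen-coefficients $a_l^{\sigma^i}\in\F_{p^r}$ to their $p$-th powers, so the divided Frobenius carries $\alpha_i$ to $\alpha_{i+1\bmod r}$, and the local span description of $C_0^{-1}$ from Proposition \ref{basic property of C_0^{-1}} then yields the identity after applying $Gr_{Fil^\cdot}$. Nothing further is needed.
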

Now we proceed to the proof of Theorem \ref{char p periodic case}.
\begin{proof}
Let $\sE^{r}$ be the set of periodic Higgs subbundles of
$(E,\theta)$ whose periods divide $r$, and $\sV^r$ the set of
$\F_{p^r}$-subrepresentations of $\V\otimes \F_{p^r}$. Now we
consider $M^{\oplus r}\in \mathcal{MF}^\nabla$ together with the
endomorphism $s_{MF}$ described as above. Let $\sH^r$ be the set of
$s_{MF}$-invariant $\tilde \phi$-stable de Rham subbundles of
$(M,\nabla,Fil^{\cdot})^{\oplus r}$. We shall show the
correspondences of Corollary \ref{fixed char p case} for $M^{\oplus
r}$ (instead of $M$) induce the claimed correspondence between
$\sV^r$ and $\sE^r$, using $\sH^r$ as a bridge. First of all, an
$\F_{p^r}$-subrepresentation of $\V\otimes \F_{p^r}$ is nothing but
a $\F_p$-subrepresentation of
$$\V\otimes \F_p\{1\}\oplus \cdots\oplus \V\otimes
\F_p\{\xi^{r-1}\}=\V\otimes \F_{p^r}
$$
which is invariant under the endomorphism $s$. Thus the functors
$\D^*$ and $\E^*$ restricts to a one to one correspondence between
$\sV^r$ and $\sH^r$. By Lemma \ref{lemma on eigendecomposition of
s_MF} on the eigen-decomposition of $s_{MF}$, an element of $\sH^r$
is given by a direct sum $\oplus_{i=0}^{r-1}(M^{i'})$ which is
$\tilde \phi$-stable, where $M^{i'}\subset M^i$ is
$\nabla^i$-invariant for each $i$, and since $\tilde \phi$ permutes
the factors $\{M^i\}$s cyclically, $M^{i'}$ is just $\tilde
\phi^{i}(M^{0'})$ for $1\leq i\leq r-1$. Thus an element of
$\sH^{r}$ can be represented by
$$
M^{0'}\oplus \tilde \phi(M^{0'})\oplus \cdots\oplus \tilde
\phi^{r-1}(M^{0'}),
$$
where $M^{0'}\subset M^0$ is a de Rham subbundle satisfying $\tilde
\phi^{r}(M^{0'})=M^{0'}$. It is clear that the functors
$Gr_{Fil^{\cdot}}$ and $C^{-1}_0$ induce a one to one correspondence
between $\sH^r$ and a set of Higgs subbundles of
$(E,\theta)^{\oplus r}=\oplus_{i=0}^{r-1}(E^i,\theta^i)$ of the following
form:
$$
(G,\theta)\oplus Gr_{Fil^\cdot}\circ C_0^{-1}(G,\theta)\oplus
\cdots\oplus (Gr_{Fil^\cdot}\circ C_0^{-1})^{r-1}(G,\theta)
$$
for a Higgs subbundle $(G,\theta)\subset (E^0,\theta^0)$ with the
property
$$
(Gr_{Fil^\cdot}\circ C_0^{-1})^{r}(G,\theta)=(G,\theta).
$$
By Lemma \ref{easy lemma for beta}, the isomorphism $\beta_0^{-1}:
(E^0,\theta^0)\cong (E,\theta)$ induces an identification between
the previous set of Higgs subbundles and $\sE_0^{r}$. Therefore, we
have established a one to one correspondence between $\sV^r$ and
$\sE^r$. Finally, let $\W\subset \V\otimes \F_{p^r}$ be an element
of $\sV^r$. Its $\sigma$-conjugation
$\W^\sigma:=\W\otimes_{\F_{p^r},\sigma}\F_{p^r}$ is considered as an
$\F_{p^r}$-subrepresentation of $\V\otimes \F_{p^r}$ via the natural
isomorphism
$$\V\otimes_{\F_p}\F_{p^r}\otimes_{\F_{p^r},\sigma}\F_{p^r}\cong
\V\otimes_{\F_p}\F_{p^r}, v\otimes \lambda\otimes \mu\mapsto
v\otimes \lambda\mu^\sigma.
$$
So one considers the endomorphism $s^{\sigma}$ on $\V\otimes
\F_{p^r}$ induced by multiplication with $\xi^\sigma$ and its
corresponding endomorphism $s_{MF}^\sigma$ on
$(M,\nabla,Fil^{\cdot},\Phi)^{\oplus r}$. Chasing the proof of Lemma
\ref{lemma on eigendecomposition of s_MF}, one sees that the $i$-th
eigenspace of $s_{MF}^\sigma$ is the $i+1 \mod r$-th eigenspace of
$s_{MF}$. This means that under the above correspondence between
$\sV^{r}$ and $\sH^r$, if $M^{0'}\oplus \cdots \oplus M^{r-1'}$
corresponds to $\W$, then $M^{1'}\oplus \cdots \oplus M^{r-1'}\oplus
M^{0'}$ corresponds to $\W^\sigma$. Therefore if $(G,\theta)\in
\sE^r$ corresponds to $\W$, then $Gr_{Fil^\cdot}\circ
C_0^{-1}(G,\theta)$ corresponds to $\W^\sigma$.
\end{proof}

\section{Periodic Higgs subbundles in mixed characteristic}\label{periodic Higgs subbundles in mixed
char}
We would like to extend our previous results in char $p$ to mixed
characteristic. For a non $p$-torsion object $M\in
\mathcal{MF}^\nabla$ we ask the following question, which is
parallel to Question \ref{motivating question in char p}.
\begin{question}
Is there a one to one correspondence between the set of
subrepresentations of $\V\otimes \hat{\bar{\Z_p}}$ and the set of
Higgs subbundles of $(E,\theta)\otimes \sO_{\bar X}$ with trivial
Chern classes, where $\bar X:=X\times_W \Spec \ \hat{\bar{\Z_p}}$?
\end{question}
This question seems to be much more difficult than the question in
char $p$ case. What we have obtained in this section is a partial
result. The following theorem is the lifted version of Theorem
\ref{char p periodic case}.
\begin{theorem}\label{theorem on one to one correspondence for periodic
points} For each $r\in \N$, there is a one to one correspondence
between the set of $\Z_{p^r}$-subrepresentations of
$\V\otimes_{\Z_p}\Z_{p^r}$ and the set of periodic Higgs subbundles
of $(E,\theta)$ whose periods divide $r$.
\end{theorem}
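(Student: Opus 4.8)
The plan is to imitate the proof of Theorem~\ref{char p periodic case} almost verbatim, making two substitutions: replace the inverse Cartier transform $C_0^{-1}$ of positive characteristic by its mixed characteristic lift $C^{-1}$ (constructed earlier in this section), and replace the subring $\F_{p^r}\subset k$ by the unramified extension $\Z_{p^r}=W(\F_{p^r})\subset W(k)$. As in \S\ref{section on periodic Higgs subbundles in char p}, the whole argument is reduced to the period one case, which I would settle first as the lift of Corollary~\ref{fixed char p case}. Let $\sV,\sH,\sE$ denote respectively the set of subrepresentations of $\V$, of $\tilde\Phi$-stable de Rham subbundles of $M$, and of periodic Higgs subbundles of $(E,\theta)$ of period one. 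The functors $\DD^t$, $\EE^t$ together with the description of subobjects at the end of \S\ref{MF category} give $\sV\leftrightarrow\sH$; for $\sH\leftrightarrow\sE$ I would prove the lift of Proposition~\ref{basic property of C_0^{-1}}. That statement is local: over a small affine $U$ with Frobenius lifting $F_{\hat U}$, if $(G,\theta)\subset(E,\theta)$ has period one, pick local bases $\{g^{i,n-i}\}$ of the $G^{i,n-i}$ and lifts $\tilde g^{i,n-i}\in Fil^iM_U$ with $\tilde g^{i,n-i}\equiv g^{i,n-i}\bmod Fil^{i+1}M$; then $C^{-1}(G,\theta)_U=\Span[\frac{\Phi_{F_{\hat U}}}{p^i}(F_{\hat U}^*\tilde g^{i,n-i}),\,0\le i\le n]$, and period one lets one take $\{\tilde g^{i,n-i}\}$ as a local basis of $C^{-1}(G,\theta)$, so that $C^{-1}(G,\theta)$ is $\tilde\Phi$-stable by inspection of the definition in \S\ref{MF category}. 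Conversely, taking $Gr_{Fil^\cdot}$ of the $\tilde\Phi$-stability identity exhibits the grading of a $\tilde\Phi$-stable de Rham subbundle as periodic of period one, using that local filtered-freeness of $M'\subset M$ is equivalent to local freeness of $Gr_{Fil^\cdot}M'$. This is the case $r=1$.

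For general $r$ I would run the base change device of the char $p$ proof over $W(k)$ in place of $k$. Since $k$ is algebraically closed, $\F_{p^r}\subset k$, hence $\Z_{p^r}=W(\F_{p^r})\subset W(k)$; fix a generator $\xi$ of the $\Z_p$-algebra $\Z_{p^r}$ with monic minimal polynomial $P(t)\in\Z_p[t]$ of degree $r$, noting $\mathrm{disc}(P)\in\Z_p^\times$ because $\Z_{p^r}/\Z_p$ is \'etale. As $\xi\in W(k)$ acts $\sO_X$-linearly, multiplication by $\xi$ on $M^{\oplus r}=M\otimes_{\Z_p}\Z_{p^r}$ is an endomorphism $s_{MF}$ of $M^{\oplus r}$ in $\mathcal{MF}^\nabla$ with $\DD^t(s_{MF})$ equal to multiplication by $\xi$ on $\V\otimes_{\Z_p}\Z_{p^r}$; thus $\Z_{p^r}$-subrepresentations of $\V\otimes_{\Z_p}\Z_{p^r}$ are exactly the $s_{MF}$-invariant subrepresentations, which by the case $r=1$ match the set $\sH^r$ of $s_{MF}$-invariant $\tilde\Phi$-stable de Rham subbundles of $M^{\oplus r}$. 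I would then reprove Lemmas~\ref{lemma on elementary number theory}--\ref{lemma on eigendecomposition of s_MF} over $W(k)$: the roots $\xi,\xi^\sigma,\dots,\xi^{\sigma^{r-1}}$ of $P$, where $\sigma\in\Gal(\Q_{p^r}|\Q_p)$ is the Frobenius and restricts from $\sigma\colon W(k)\to W(k)$, are pairwise incongruent modulo $p$ since $\bar\xi$ generates $\F_{p^r}$ over $\F_p$, so $\mathrm{disc}(P)$ being a unit yields an \emph{integral} eigen-decomposition $M^{\oplus r}=\bigoplus_{i=0}^{r-1}M^i$ with explicit isomorphisms $\alpha_i\colon(M,\nabla,Fil^\cdot)\cong(M^i,\nabla^i,Fil^i)$; and because $F_{\hat U}$ restricts to $\sigma$ on $W(k)$ and so twists the eigenvector coefficients $a_j\in\Z_{p^r}$ by $\sigma$, the Frobenius structure $\Phi^{\oplus r}$ permutes $\{M^i\}$ cyclically. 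Hence each element of $\sH^r$ has the shape $M^{0'}\oplus\tilde\Phi(M^{0'})\oplus\cdots\oplus\tilde\Phi^{r-1}(M^{0'})$ with $\tilde\Phi^r(M^{0'})=M^{0'}$; applying $Gr_{Fil^\cdot}\circ C^{-1}$, the induced isomorphisms $\beta_i\colon(E,\theta)\cong(E^i,\theta^i)$, and the lift of Lemma~\ref{easy lemma for beta} (valid by the $\sigma$-semilinearity of $C^{-1}$), one identifies $\sH^r$ with the set $\sE^r$ of periodic Higgs subbundles of $(E,\theta)$ whose period divides $r$. Composing $\sV^r\leftrightarrow\sH^r\leftrightarrow\sE^r$ proves the theorem; tracing $\sigma$-conjugation exactly as in the char $p$ proof shows in addition that it corresponds to $Gr_{Fil^\cdot}\circ C^{-1}$ on the Higgs side.

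Everything above is a transcription of \S\ref{section on periodic Higgs subbundles in char p}; the genuinely new content, and what I expect to be the main obstacle, is already packaged in the case $r=1$, namely having $C^{-1}$ at one's disposal with the displayed local formula and its compatibility with the operator $\tilde\Phi$. Concretely, one must verify that in mixed characteristic the local spans $\Span[\frac{\Phi_{F_{\hat U}}}{p^i}(F_{\hat U}^*\tilde g^{i,n-i})]$ --- where division by $p^i$ is legitimate by the strong $p$-divisibility in iii) of the definition of $\mathcal{MF}^\nabla$ --- are independent, modulo the span of the lower terms, of the chosen lifts $\tilde g^{i,n-i}$, are free $\sO_X$-modules of the expected rank carrying a locally split induced filtration and stable under $\nabla$ (by the horizontality iv)), and glue across small affines via the Taylor formula of \S\ref{MF category}; that is, that $C^{-1}(G,\theta)$ is an honest de Rham subbundle of $(M,\nabla)$. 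A secondary point absent in characteristic $p$ is that the eigen-decomposition of $s_{MF}$ must be performed integrally over $W(k)$, not merely after inverting $p$; this is precisely what the unramifiedness of $\Z_{p^r}/\Z_p$, i.e.\ $\mathrm{disc}(P)\in\Z_p^\times$, guarantees.
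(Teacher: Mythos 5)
Your proposal follows essentially the same route as the paper: reduce to the period-one case via the lifted analogue of Proposition \ref{basic property of C_0^{-1}} (which the paper packages into Theorem \ref{C_1^{-1}} and its extension to general $m$), then handle general $r$ by rerunning the eigen-decomposition of $s_{MF}$ from Lemmas \ref{lemma on elementary number theory}--\ref{easy lemma for beta} over $\Z_{p^r}$ instead of $\F_{p^r}$. The only differences are presentational --- the paper works inductively with the tower $C_m^{-1}$ rather than a single operator $C^{-1}$, and you helpfully make explicit the unramifiedness/unit-discriminant reason why the eigen-decomposition is integral, which the paper merely asserts.
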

The meaning of a periodic Higgs subbundle in mixed characteristic
will be explained below. The key to the above theorem is the
construction of a lifting of the inverse Cartier transform to mixed
characteristic. The construction is done in an inductive
way. So we shall consider first the lifting of the inverse Cartier
transform to $W_2$. While the inverse Cartier transform associates to any Higgs subbundle of $(E,\theta)_0$ a de Rham subbundle, the objects of
our lifted inverse Cartier transform over $W_2$ are \emph{not} all Higgs
subbundles of $(E,\theta)_1$, rather those subject to the periodic condition in char $p$. See Proposition \ref{extension of inverse cartier transform to the larger set} for the precise statement. \\

Let $(G,\theta)\subset (E,\theta)_1$ be a subsystem of Hodge bundles. By abuse
of notation, we denote the image of $(G,\theta)_0$ in $(E,\theta)_0$
again by $(G,\theta)_0$. A similar abuse applies to the modulo $p$
reduction of a de Rham subbundle in $(M,\nabla)_1$.
\begin{theorem}\label{C_1^{-1}}
If $(G,\theta)_0\subset (E,\theta)_0$ is a periodic Higgs subbundle
of period one, then one constructs a de Rham subbundle
$C_1^{-1}(G,\theta)\subset (M,\nabla)_1$ with the same rank as $G$
satisfying the equality
$$
(C_1^{-1}(G,\theta))_0=C_0^{-1}(G,\theta)_0.
$$
Furthermore, if the equality
$$
Gr_{Fil^{\cdot}}\circ C_1^{-1}(G,\theta)=(G,\theta)
$$
is satisfied, then $C_1^{-1}(G,\theta)\subset M_1$ is $\tilde
\Phi$-stable, hence corresponds to a subrepresentation of $\V\otimes \Z_p/p^2$ by Theorem \ref{Faltings theorem}.
\end{theorem}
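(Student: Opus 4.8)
The plan is to mimic the char $p$ construction of the inverse Cartier transform, but work modulo $p^2$ using the explicit local formula for $C_0^{-1}$ coming from the Faltings $\tilde\Phi$-operator. Recall from the proof of Proposition \ref{basic property of C_0^{-1}} that, locally on a small affine $U$ with Frobenius lifting $F_{\hat U}$, the bundle $C_0^{-1}(G,\theta)_0$ is spanned by the elements $\frac{\Phi_{F_{\hat U}}}{p^i}(F_{\hat U}^*\tilde g^{i,n-i})$, where $\{g^{i,n-i}\}$ is a local basis of $G^{i,n-i}_0$ and $\tilde g^{i,n-i}\in Fil^iM_0$ is any lift. The first step is to run the identical recipe one level up: choose local bases of the $G^{i,n-i}$ over $U_1$, choose lifts $\tilde g^{i,n-i}\in Fil^iM_1$, and \emph{define}
$$
C_1^{-1}(G,\theta)_{U_1}=\Span\Big[\frac{\Phi_{F_{\hat U}}}{p^i}(F_{\hat U}^*\tilde g^{i,n-i}),\ 0\leq i\leq n\Big]\subset M_1.
$$
By the strong $p$-divisibility in iii) each term lies in $M_1$, and the reduction mod $p$ is by construction $C_0^{-1}(G,\theta)_0$; since that reduction is locally free of rank equal to $\rank G$, Nakayama shows the mod $p^2$ span is locally free of the same rank. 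One then checks this subsheaf is horizontal for $\nabla$ exactly as in char $p$, using horizontality iv) of $\Phi_{F_{\hat U}}$ together with the fact that $\nabla(Fil^i)\subset Fil^{i-1}\otimes\Omega$ and the Taylor/Leibniz bookkeeping; so $C_1^{-1}(G,\theta)\subset(M,\nabla)_1$ is a de Rham subbundle.

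The delicate point is \textbf{independence of all the choices} — both the choice of $F_{\hat U}$ and the choice of lifts $\tilde g^{i,n-i}$ — so that the local pieces glue to a global de Rham subbundle of $(M,\nabla)_1$. For the Frobenius lifting this is governed by the Taylor formula relating $\Phi_{F}$ and $\Phi_{F'}$ via the isomorphism $\alpha(e\otimes 1)=\sum_{\underline i}\nabla_\partial^{\underline i}(e)\otimes \frac{z^{\underline i}}{\underline i!}$ with $z_i\equiv 0\bmod p$; the $\bmod\ p^2$ truncation of this series has only the $|\underline i|\leq 1$ terms contributing, and one has to verify that the resulting correction keeps the span invariant — here is precisely where the period-one hypothesis enters, because only then can the $\tilde g^{i,n-i}$ themselves be taken as a basis of $C_0^{-1}(G,\theta)_0$ (Proposition \ref{basic property of C_0^{-1}}), so that the correction terms, which a priori land in $M_1$, are visibly combinations of the chosen generators modulo $p$ and hence modulo $p^2$ after rescaling. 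For the choice of lifts $\tilde g^{i,n-i}$: changing $\tilde g^{i,n-i}$ by an element of $pFil^i M_1 + Fil^{i+1}M_1$ changes $\frac{\Phi_{F_{\hat U}}}{p^i}(F_{\hat U}^*\tilde g^{i,n-i})$ by an element of $pM_1$ plus $\frac{\Phi_{F_{\hat U}}}{p^i}(F_{\hat U}^* Fil^{i+1}M_1)$; the latter lies in $p M_1$ too by strong $p$-divisibility, and the former contributes, modulo $p^2$, an element of $p\cdot C_0^{-1}(G,\theta)_0$ by the period-one assumption again. So the span is well-defined mod $p^2$. I expect this verification — keeping careful track of which terms survive mod $p^2$ and invoking periodicity at each turn — to be the main obstacle, and the bulk of the write-up.

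Finally, for the last assertion: suppose $Gr_{Fil^\cdot}\circ C_1^{-1}(G,\theta)=(G,\theta)$. As in the proof of Proposition \ref{basic property of C_0^{-1}}, this period-one condition at the $W_2$-level lets us take the $\tilde g^{i,n-i}$ to be a local basis of $M':=C_1^{-1}(G,\theta)$ adapted to the induced filtration, i.e. $\tilde g^{i,n-i}\in Fil^i M'$ with $Fil^iM'=Fil^iM\cap M'$ and the images generating $Gr^i$. Then the defining formula $M'_{U_1}=\Span[\frac{\Phi_{F_{\hat U}}}{p^i}F_{\hat U}^*\tilde g^{i,n-i}]$ reads exactly as
$$
\sum_{i=0}^{n}\frac{\Phi_{F_{\hat U}}}{p^i}F_{\hat U}^*Fil^iM'=M',
$$
which is condition (ii) of $\tilde\Phi$-stability; condition (i), filtered-freeness of $Fil^\cdot M'$, follows since $Gr_{Fil^\cdot}M'=(G,\theta)$ is locally free and the filtration of the flat bundle $M'$ is locally split (the lifts $\tilde g^{i,n-i}$ provide a splitting). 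Independence of condition (ii) from $F_{\hat U}$ is the Taylor-formula remark already recorded after the definition of $\tilde\Phi$-stability. Hence $C_1^{-1}(G,\theta)\subset M_1$ is $\tilde\Phi$-stable, and Theorem \ref{Faltings theorem} produces the corresponding subrepresentation of $\V\otimes\Z_p/p^2$.
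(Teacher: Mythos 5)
Your overall blueprint --- defining $C_1^{-1}(G,\theta)$ locally as $\Span[\frac{\Phi_{F_{\hat U}}}{p^i}(F_{\hat U}^*\tilde g^{i,n-i})]$, checking the reduction mod $p$ is $C_0^{-1}(G,\theta)_0$, and deducing $\tilde\Phi$-stability from $Gr_{Fil^\cdot}\circ C_1^{-1}(G,\theta)=(G,\theta)$ by taking the $\tilde g^{i,n-i}$ to be a basis of $C_1^{-1}(G,\theta)$ --- matches the paper, and your final paragraph is essentially the paper's own proof of the second assertion. But there is a genuine gap in the well-definedness step, and it is precisely where the paper has to do real work. With \emph{arbitrary} lifts $\tilde g^{i,n-i}\in Fil^iM_1$ of a basis of $G^{i,n-i}$, the span is \emph{not} independent of the choices. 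Changing a lift by $\eta\in Fil^{i+1}M_1$ changes the corresponding generator by $\frac{\Phi_{F_{\hat U}}}{p^i}(F_{\hat U}^*\eta)=p\cdot\frac{\Phi_{F_{\hat U}}}{p^{i+1}}(F_{\hat U}^*\eta)$, whose image under the isomorphism $\frac{1}{p}\colon pM_1\cong M_0$ is $\frac{\Phi_{F_{\hat U}}}{p^{i+1}}(F_{\hat U}^*\eta_0)$ for an arbitrary $\eta_0\in Fil^{i+1}M_0$. These elements sweep out $\frac{\Phi_{F_{\hat U}}}{p^{i+1}}(F_{\hat U}^*Fil^{i+1}M_0)$, which by strong $p$-divisibility is a large piece of $M_0$ and is in general \emph{not} contained in $C_0^{-1}(G,\theta)_0$; Proposition \ref{basic property of C_0^{-1}} only controls $\frac{\Phi}{p^j}F^*$ applied to $Fil^jM_0\cap C_0^{-1}(G,\theta)_0$, not to all of $Fil^jM_0$. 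So your assertion that the correction term ``contributes an element of $p\cdot C_0^{-1}(G,\theta)_0$ by the period-one assumption'' is unjustified and false in general, and the object you define depends on the lifts.

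The missing idea is the paper's Lemma \ref{existence of liftings}: one must normalize the lifts by the \emph{additional} condition $\tilde g^{i,n-i}\bmod p\in C_0^{-1}(G,\theta)_0$, and the period-one hypothesis $Gr_{Fil^\cdot}\circ C_0^{-1}(G,\theta)_0=(G,\theta)_0$ is used exactly to prove that such normalized lifts exist (it guarantees that $Fil^{i}M_0\cap C_0^{-1}(G,\theta)_0$ surjects onto $G_0^{i,n-i}$, so an arbitrary lift can be corrected by an element of $Fil^{i+1}M$). With this normalization the difference of two admissible lifts reduces mod $p$ into $Fil^{i+1}C_0^{-1}(G,\theta)_0=\Span[\tilde g_0^{j,n-j},\ j\geq i+1]$, hence lies in $pFil^{i+1}M+\Span[\tilde g^{j,n-j},\ j\geq i+1]$, and a descending induction on $i$ (using $\frac{\Phi}{p^i}(pF_{\hat U}^*Fil^{i+1}M)=0$) shows the span is unchanged. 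The same normalization is also what drives the $\nabla$-invariance and the Frobenius-independence/gluing arguments that you compress into ``Taylor/Leibniz bookkeeping'': every correction term lands in $pM_1$, and one identifies its $\frac{1}{p}$-image as an element of $C_0^{-1}(G,\theta)_0$ only because the $\tilde g_0^{i,n-i}$ were arranged to lie in that $\nabla$- and $\tilde\Phi$-stable submodule. Without condition (ii) of Lemma \ref{existence of liftings}, none of these verifications closes.
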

We shall call $C_1^{-1}$ in the theorem an inverse Cartier transform over $W_2$. Its construction is based on our previous work \cite{SXZ} aiming at
a 'physical' understanding of the inverse Cartier transform of Ogus
and Vologodsky. Recently, we have generalized the construction to a
certain category of nilpotent Higgs bundles which have no bearing with the category $\mathcal{MF}^\nabla$. For clarity, we shall carry
out only the local version in this section and complete the global
construction in \S\ref{globalization of lifting of inverse cartier transform}.\\

{\itshape A local inverse Cartier transform over $W_2$.} In the
following paragraph, $X$ is assumed to be affine with a Frobenius
lifting $F_{\hat X}: \hat X\to \hat X$. Let $M\in
\mathcal{MF}^\nabla$ with $pM\neq 0$ and $p^2M=0$. Let
$(G,\theta)\subset (E,\theta)$ be a Higgs subbundle satisfying
$$
Gr_{Fil^{\cdot}}\circ C_0^{-1}(G,\theta)_0=(G,\theta)_0.
$$
Our starting point of the construction is to observe the existence
of special liftings for a basis of $G$.
\begin{lemma}\label{existence of liftings}
Let $\{g^{i,n-i}\}_{0\leq i\leq n}$ be a basis of
$G=\oplus_{i=0}^{n} G^{i,n-i}$. Then there exists a set of elements
$\{\tilde g^{i,n-i}\}_{0\le i\leq n}$ in $M$ with $\tilde
g^{i,n-i}\subset Fil^{i}M$ satisfying two conditions:
\begin{itemize}
    \item [(i)] $\tilde g^{i,n-i}\mod Fil^{i+1}M=g^{i,n-i}$,
    \item [(ii)] $\tilde g^{i,n-i}\mod pM \in
    C_0^{-1}(G,\theta)_0$.
\end{itemize}
\end{lemma}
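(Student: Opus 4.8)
The plan is to first choose arbitrary liftings of the $g^{i,n-i}$ to $Fil^iM$ satisfying (i), and then to correct them by sections of $Fil^{i+1}M$, which do not affect (i), so that (ii) holds as well; all the substance lies in extracting the right consequence of the char $p$ hypothesis. Write $N:=C_0^{-1}(G,\theta)_0\subset (M,\nabla)_0$ and give it the induced filtration $Fil^jN:=N\cap Fil^jM_0$. By the definition of $Gr_{Fil^\cdot}$ applied to a de Rham subbundle, the period-one assumption $Gr_{Fil^\cdot}\circ C_0^{-1}(G,\theta)_0=(G,\theta)_0$ says precisely that the natural injection $Fil^iN/Fil^{i+1}N\hookrightarrow E^{i,n-i}_0=Fil^iM_0/Fil^{i+1}M_0$ identifies $Fil^iN/Fil^{i+1}N$ with $G^{i,n-i}_0$ for every $i$; equivalently, the local description of $C_0^{-1}$ recalled in the proof of Proposition \ref{basic property of C_0^{-1}} shows that, under this hypothesis, $N$ admits a local basis $\{e^{i,n-i}\}$ with $e^{i,n-i}\in N\cap Fil^iM_0$ whose image in $E^{i,n-i}_0$ is $g^{i,n-i}_0$. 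In particular such $e^{i,n-i}$ exist.

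Next I would lift each $e^{i,n-i}$ to $M$ while retaining the prescribed image $g^{i,n-i}\in E^{i,n-i}$. The two prescriptions are compatible, since the image of $g^{i,n-i}$ in $E^{i,n-i}_0$ is $g^{i,n-i}_0$, which is also the image of $e^{i,n-i}$. As $M$ is locally filtered free and $p^2M=0$, reduction modulo $p$ is compatible with the filtration, so $pM\cap Fil^iM=pFil^iM$ and the natural map $Fil^iM\to E^{i,n-i}\times_{E^{i,n-i}_0}Fil^iM_0$ is surjective. Concretely: choose $a\in Fil^iM$ reducing to $g^{i,n-i}$ in $E^{i,n-i}$; then $(a\bmod p)-e^{i,n-i}$ lies in $Fil^{i+1}M_0$; lift it to some $c\in Fil^{i+1}M$ and set $\tilde g^{i,n-i}:=a-c$. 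Then $\tilde g^{i,n-i}\in Fil^iM$; its class modulo $Fil^{i+1}M$ is $g^{i,n-i}$, giving (i); and its class modulo $pM$ is $e^{i,n-i}\in C_0^{-1}(G,\theta)_0$, giving (ii).

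The only step that needs real care, and the one carrying the actual content (there is no serious obstacle beyond it), is the first: deducing from the period-one condition that the $i$-th graded piece of the de Rham subbundle $C_0^{-1}(G,\theta)_0$, for the induced filtration, is exactly $G^{i,n-i}_0$ inside $E^{i,n-i}_0$, so that the $e^{i,n-i}$ can be found; this is a statement purely about the Ogus--Vologodsky correspondence over $k$ of \S\ref{section on periodic Higgs subbundles in char p}, and once it is recorded the subsequent lifting to $W_2$ is formal. Finally, if one wants the $\{\tilde g^{i,n-i}\}$ to form a local basis of a subbundle of $M$ of rank equal to $\rank G$ (this is how they will enter the construction of $C_1^{-1}$), that follows from Nakayama's lemma applied to their reductions $\{e^{i,n-i}\}$, which form a local basis of $C_0^{-1}(G,\theta)_0$.
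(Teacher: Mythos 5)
Your proposal is correct and follows essentially the same route as the paper: take an arbitrary lift in $Fil^iM$ satisfying (i), use the period-one hypothesis $Gr_{Fil^\cdot}\circ C_0^{-1}(G,\theta)_0=(G,\theta)_0$ to produce a representative of $g_0^{i,n-i}$ lying in $Fil^iC_0^{-1}(G,\theta)_0$ (your $e^{i,n-i}$ is the paper's $\breve g_0^{i,n-i}$), and correct the lift by an element of $Fil^{i+1}M$ lifting the discrepancy. The only cosmetic difference is that the paper subtracts the correction from the chosen lift rather than phrasing it as a fibre-product surjectivity statement; the content is identical.
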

\begin{proof}
Without loss of generality we can assume in the argument that each
set $g^{i,n-i}$ consists of only one element if nonempty. Put
$$
\tilde g^{n,0}=g^{n,0}.
$$
For each $0\leq i\leq n-1$, we take any $\tilde g^{'i,n-i}\in M$
satisfying (i). We shall modify it as follows: consider its modulo
$p$ reduction $\tilde g_0^{'i,n-i}\in M_0$, which satisfies
$$
\tilde g_0^{'i,n-i} \mod Fil^{i+1}M_0=g^{i,n-i}_0\in G^{i,n-i}_0.
$$
Since
$$
Gr_{Fil^{\cdot}}[C_0^{-1}(G,\theta)_0]=(G,\theta)_0
$$
holds by assumption, there exists a $\breve{g}_0^{i,n-i}\in
Fil^{i}C_0^{-1}(G,\theta)_0$ satisfying
$$
\breve{g}_0^{i,n-i}\mod Fil^{i+1}M_0=g^{i,n-i}_0.
$$
In other words, it holds that
$$
\omega_0^{i+1,n-i-1}:=\tilde g^{'i,n-i}_0-\breve{g}_0^{i,n-i}\in
Fil^{i+1}M_0.
$$
Now we pick an $\omega^{i+1,n-i-1}\in Fil^{i+1}M$ lifting
$\omega_0^{i+1,n-i-1}$ and set
$$
\tilde g^{i,n-i}=\tilde g^{'i,n-i}-\omega^{i+1,n-i-1}.
$$
Then this modified element satisfies both conditions.
\end{proof}
\begin{proposition and definition}\label{local cartier}
Notation as above. Then the $\sO_{X_1}$-submodule
$$
\Span[\frac{\Phi_{F_{\hat X}}}{p^{i}}(F_{\hat X}^*\tilde
g^{i,n-i}),\ 0\leq i\leq n]
$$
is a well-defined de Rham submodule of $(M,\nabla)$. More precisely,
the span is independent of the choice of basis elements
$\{g^{i,n-i}\}$ of $G$ and the choice of liftings $\{\tilde
g^{i,n-i}\}$ in Lemma \ref{existence of liftings}. We call it
$C_1^{-1}(G,\theta)$.
\end{proposition and definition}
\begin{remark}
In \S\ref{globalization of lifting of inverse cartier transform}, we
show further that $C_1^{-1}(G,\theta)$ is also independent of the
choice of Frobenius lifting $F_{\hat X}$.
\end{remark}
\begin{proof}
For simplicity, we omit the subscript of $\Phi_{F_{\hat X}}$ and
denote $e\otimes 1$ for the pullback of an element $e\in M$ via
$F_{\hat X}$.
\begin{claim}\label{independece of choice of liftings}
The span is independent of the choice of elements $\{\tilde g^{i,
n-i}\}$ in Lemma \ref{existence of liftings} as well as the choice
of basis elements $\{g^{i,n-i}\}$ of $G$. Hence
$C_1^{-1}(G,\theta)\subset M$ is well defined.
\end{claim}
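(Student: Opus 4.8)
The plan is to separate the assertion into (a) independence under a change of the graded basis $\{g^{i,n-i}\}$ of $G$, and (b), for a fixed such basis, independence under a change of the liftings $\{\tilde g^{i,n-i}\}$ produced by Lemma \ref{existence of liftings}; then (a) reduces quickly to (b). For the reduction I would observe that two graded bases of $G=\oplus_iG^{i,n-i}$ differ by an invertible matrix $B$ over $\sO_{X_1}$ which is block diagonal for the Hodge grading, and that $\{\sum_kB_{ik}\tilde g^{k,n-k}\}$ is then a legitimate system of liftings for the new basis: conditions (i) and (ii) of Lemma \ref{existence of liftings} persist because $Fil^{i+1}M$ is an $\sO_{X_1}$-submodule and $C_0^{-1}(G,\theta)_0$ is an $\sO_{X_0}$-module. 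Since $\Phi_{F_{\hat X}}/p^i\circ F_{\hat X}^*$ transforms the spanning vectors by the entrywise Frobenius pullback of $B$, which is still invertible, the span is unchanged; so it suffices to fix $\{g^{i,n-i}\}$ and vary the liftings.

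The preliminary point I would establish first is that the reductions $\{\tilde g^{j,n-j}\bmod p\}_j$ form a basis of $C_0^{-1}(G,\theta)_0$ adapted to its induced filtration. By (ii) they lie in $C_0^{-1}(G,\theta)_0$; by construction $\tilde g^{j,n-j}\bmod p\in Fil^jM_0\cap C_0^{-1}(G,\theta)_0$; and they project onto the basis $\{g^{j,n-j}_0\}$ of $Gr_{Fil^\cdot}C_0^{-1}(G,\theta)_0=(G,\theta)_0$. Granting (cf. the appendix) that $C_0^{-1}(G,\theta)_0$ is a de Rham subbundle of $(M,\nabla)_0$ whose induced filtration is locally split, any family of elements lying in the successive filtration steps and projecting to a basis of the associated graded is automatically an adapted basis; hence
$$
Fil^{i+1}M_0\cap C_0^{-1}(G,\theta)_0=\sum_{j\geq i+1}\sO_{X_0}\cdot(\tilde g^{j,n-j}\bmod p).
$$

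Next, given a second system of liftings $\{\tilde g^{'i,n-i}\}$, I would set $\delta^i=\tilde g^{i,n-i}-\tilde g^{'i,n-i}$. From (i), $\delta^i\in Fil^{i+1}M$; from (ii) and the displayed identity, $\delta^i\bmod p=\sum_{j\geq i+1}a_j(\tilde g^{j,n-j}\bmod p)$ for suitable $a_j\in\sO_{X_0}$. Lifting the $a_j$ to $\tilde a_j\in\sO_{X_1}$, the element $\delta^i-\sum_{j\geq i+1}\tilde a_j\tilde g^{j,n-j}$ lies in $pM\cap Fil^{i+1}M$, and local filtered-freeness of $M$ gives $pM\cap Fil^{i+1}M=p\cdot Fil^{i+1}M$; so it can be written $p\mu$ with $\mu\in Fil^{i+1}M$. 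Writing $\Phi_i=\Phi_{F_{\hat X}}/p^i$ for the divided Frobenius on $F_{\hat X}^*Fil^iM$, so that $\Phi_i|_{F_{\hat X}^*Fil^{i+1}M}=p\,\Phi_{i+1}$, the relation $p^2M=0$ forces $\Phi_i(F_{\hat X}^*(p\mu))=0$, while for $j\geq i+1$ one has $\Phi_i(F_{\hat X}^*\tilde g^{j,n-j})=p^{\,j-i}\Phi_j(F_{\hat X}^*\tilde g^{j,n-j})\in p\cdot\Span[\Phi_j(F_{\hat X}^*\tilde g^{j,n-j})]$. Absorbing the $\tilde a_j$ by $F_{\hat X}$-semilinearity, this shows $\Phi_i(F_{\hat X}^*\delta^i)\in\Span[\Phi_j(F_{\hat X}^*\tilde g^{j,n-j}):0\leq j\leq n]$, hence $\Phi_i(F_{\hat X}^*\tilde g^{'i,n-i})=\Phi_i(F_{\hat X}^*\tilde g^{i,n-i})-\Phi_i(F_{\hat X}^*\delta^i)$ lies in that span; by symmetry the two spans coincide, which is the claim.

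I expect the $F_{\hat X}$-semilinear bookkeeping and the manipulations with the divided Frobenii $\Phi_i$ (which are part of the $\mathcal{MF}^\nabla$-datum in the $p$-torsion formulation indicated in \S\ref{MF category}) to be entirely routine. The one load-bearing input is the statement invoked in the second paragraph, that $C_0^{-1}(G,\theta)_0$ is a de Rham subbundle of $(M,\nabla)_0$ with locally split induced filtration: this is precisely the clarification deferred to the appendix, and it is exactly what legitimizes passing to an adapted basis of $C_0^{-1}(G,\theta)_0$, on which the whole independence argument rests.
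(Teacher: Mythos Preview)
Your proposal is correct and follows essentially the same route as the paper: both arguments hinge on the identity $Fil^{i+1}C_0^{-1}(G,\theta)_0=\Span[\tilde g_0^{j,n-j}:j\geq i+1]$, write the difference of two liftings as an element of $pFil^{i+1}M+\Span[\tilde g^{j,n-j}:j\geq i+1]$, and kill the $pFil^{i+1}M$-part via $\frac{\Phi}{p^i}(pFil^{i+1}M)=0$. The only cosmetic difference is that the paper packages the comparison as a descending induction on $i$ (showing equality of the truncated spans at each level), whereas you do it in one stroke and invoke symmetry at the end; your explicit appeal to $pM\cap Fil^{i+1}M=p\,Fil^{i+1}M$ and to the adapted-basis fact is exactly what the paper uses implicitly.
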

\begin{proof}
Let $\{\tilde g^{'i,n-i}\}$ be another set of elements in Lemma
\ref{existence of liftings}. Then by condition (i) we can write
$$\tilde g^{'i,n-i}=\tilde g^{i,n-i}+\omega^{i+1,n-i-1},$$ for an
$\omega^{i+1,n-i-1}\in Fil^{i+1}M$. By condition (ii),
$$
\omega^{i+1,n-i-1}_0\in Fil^{i+1}C_0^{-1}(G,\theta)_0.
$$
Note that the two conditions of Lemma \ref{existence of liftings} imply the equality:
$$
Fil^{i+1}C_0^{-1}(G,\theta)_0=\Span[\tilde g^{n,0}_0,\cdots,\tilde g_0^{i+1,n-i-1}].
$$
It follows that
$$
\omega^{i+1,n-i-1}\in pFil^{i+1}M+\Span[\tilde
g^{n,0},\cdots,\tilde g^{i+1,n-i-1}].
$$
Now we use the induction on $i$ to show the claim: for $i=n$, there is nothing to prove. The induction hypothesis for $i+1$ means that
$$
\Span[\frac{\Phi}{p^j}(\tilde g^{j,n-j}\otimes
1),\ i+1\leq j\leq n] =\Span[\frac{\Phi}{p^j}(\tilde
g^{'j,n-j}\otimes 1),\ i+1\leq j\leq n].
$$
It follows from the above discussion that
\begin{eqnarray*}
  \frac{\Phi}{p^i}(\tilde g^{'i,n-i}\otimes 1)-\frac{\Phi}{p^i}(\tilde g^{i,n-i}\otimes 1) &=&  \frac{\Phi}{p^i}(\omega^{i+1,n-i-1}\otimes 1) \\
  \in\frac{\Phi}{p^i}F_{\hat X}^*[pFil^{i+1}M  & +&  \Span[\tilde
g^{n,0},\cdots,\tilde
g^{i+1,n-i-1}]] \\
    &=& \frac{\Phi}{p^i}F_{\hat X}^*[\Span[\tilde g^{n,0},\cdots,\tilde
g^{i+1,n-i-1}]].
\end{eqnarray*}
The last equality follows from the fact $\frac{\Phi}{p^{i}}(pFil^{i+1}M)=0$. Since clearly
$$
\frac{\Phi}{p^i}F_{\hat X}^*[\Span[\tilde g^{n,0},\cdots,\tilde
g^{i+1,n-i-1}]]\subset \Span[\frac{\Phi}{p^j}(\tilde
g^{j,n-j}\otimes 1),\ i+1\leq j\leq n],
$$
the case for $i$ then follows. The $i=0$ case is the first statement
of the claim. Note that two different choice of bases of $G$ are
interrelated through an invertible matrix. It relates also special
liftings in Lemma \ref{existence of liftings} for these two bases.
Clearly, they define the same span. Thus $C_1^{-1}(G,\theta)$ is a
well defined submodule of $M$.
\end{proof}
It remains to show the following
\begin{claim}
The $\sO_{X_1}$-submodule $C_1^{-1}(G,\theta)\subset M$ is invariant
under the action of $\nabla$. Hence $C_1^{-1}(G,\theta)$, equipped
with the induced connection, is a de Rham submodule of $(M,\nabla)$.
\end{claim}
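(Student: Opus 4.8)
The plan is to check directly that $\nabla$ carries each generator $h_i:=\frac{\Phi_{F_{\hat X}}}{p^i}(F_{\hat X}^*\tilde g^{i,n-i})$ of $C_1^{-1}(G,\theta)$ into $C_1^{-1}(G,\theta)\otimes\Omega_{\hat X}$; since $C_1^{-1}(G,\theta)$ is generated over $\sO_{X_1}$ by these elements, the Leibniz rule then gives the $\nabla$-invariance of the whole span. To compute $\nabla(h_i)$ I would invoke the horizontality axiom iv) for $\Phi_{F_{\hat X}}$, which gives $\nabla(h_i)=\frac{1}{p^i}(\Phi_{F_{\hat X}}\otimes\id)\big(F_{\hat X}^*\nabla(F_{\hat X}^*\tilde g^{i,n-i})\big)$; writing $\nabla=\sum_j\nabla_{\partial_{t_j}}(\cdot)\otimes dt_j$ in the chosen \'etale local coordinates one has $F_{\hat X}^*\nabla(F_{\hat X}^*\tilde g^{i,n-i})=\sum_j F_{\hat X}^*(\nabla_{\partial_{t_j}}\tilde g^{i,n-i})\otimes dF_{\hat X}(t_j)$. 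Because $F_{\hat X}$ is a Frobenius lifting, $dF_{\hat X}(t_j)=p\,\eta_j$ with $\eta_j\in\Omega_{\hat X}$, and this extra $p$ absorbs one power of $p^{-1}$:
$$
\nabla(h_i)=\frac{1}{p^{i-1}}\sum_j \Phi_{F_{\hat X}}\!\big(F_{\hat X}^*\nabla_{\partial_{t_j}}\tilde g^{i,n-i}\big)\otimes\eta_j ,
$$
which is legitimate since $\nabla_{\partial_{t_j}}\tilde g^{i,n-i}\in Fil^{i-1}M$ by Griffiths transversality, so $\Phi_{F_{\hat X}}(F_{\hat X}^*(\cdot))\in p^{i-1}M$.

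The crucial point is to expand $\nabla_{\partial_{t_j}}\tilde g^{i,n-i}$ along the $\tilde g^{k,n-k}$ with error lying in $Fil^iM$, one level deeper than Griffiths transversality alone provides. First, since $(G,\theta)$ is a sub-Higgs bundle, $\theta(g^{i,n-i})\in G^{i-1,n-i+1}\otimes\Omega$, so modulo $Fil^iM$ one has $\nabla_{\partial_{t_j}}\tilde g^{i,n-i}\equiv \tilde c_{j,i-1}\tilde g^{i-1,n-i+1}$ for suitable $\tilde c_{j,i-1}\in\sO_{\hat X}$, whence the remainder $r_j:=\nabla_{\partial_{t_j}}\tilde g^{i,n-i}-\tilde c_{j,i-1}\tilde g^{i-1,n-i+1}$ lies in $Fil^iM$. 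Next I reduce $r_j$ mod $p$: by condition (ii) of Lemma \ref{existence of liftings} the reductions $\tilde g^{k,n-k}_0$ lie in $C_0^{-1}(G,\theta)_0$, and $C_0^{-1}(G,\theta)_0$ is a flat, hence $\nabla_0$-stable, subbundle of $(M,\nabla)_0$; therefore $r_{j,0}\in C_0^{-1}(G,\theta)_0\cap Fil^iM_0=Fil^iC_0^{-1}(G,\theta)_0$, which by the description established in the proof of Claim \ref{independece of choice of liftings} is $\Span[\tilde g^{k,n-k}_0:k\ge i]$. Lifting this relation, and using that the filtration on $M$ is locally split to upgrade ``$p\,\omega_j\in Fil^iM$'' to ``$\omega_j\in Fil^iM$'', I get
$$
\nabla_{\partial_{t_j}}\tilde g^{i,n-i}=\sum_{k\ge i-1}\tilde c_{jk}\,\tilde g^{k,n-k}+p\,\omega_j ,\qquad \omega_j\in Fil^iM .
$$

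Substituting and using $\sO_{\hat X}$-linearity of $\Phi_{F_{\hat X}}$ then finishes the computation. For each $k\ge i-1$ one has $\frac{1}{p^{i-1}}\Phi_{F_{\hat X}}(F_{\hat X}^*(\tilde c_{jk}\tilde g^{k,n-k}))=F_{\hat X}(\tilde c_{jk})\,p^{\,k-i+1}h_k\in C_1^{-1}(G,\theta)$, and for $k\ge i+1$ this vanishes because $p^2M=0$, so only $k=i-1$ and $k=i$ contribute, giving $F_{\hat X}(\tilde c_{j,i-1})h_{i-1}+F_{\hat X}(\tilde c_{j,i})\,p\,h_i$. The error term $\frac{1}{p^{i-1}}\Phi_{F_{\hat X}}(F_{\hat X}^*(p\,\omega_j))=p\cdot\frac{1}{p^{i-1}}\Phi_{F_{\hat X}}(F_{\hat X}^*\omega_j)$ vanishes outright: $\omega_j\in Fil^iM$ forces $\Phi_{F_{\hat X}}(F_{\hat X}^*\omega_j)\in p^iM$, hence $\frac{1}{p^{i-1}}\Phi_{F_{\hat X}}(F_{\hat X}^*\omega_j)\in pM$, and the remaining factor of $p$ kills it. Thus $\nabla(h_i)=\sum_j\big(F_{\hat X}(\tilde c_{j,i-1})h_{i-1}+F_{\hat X}(\tilde c_{j,i})\,p\,h_i\big)\otimes\eta_j\in C_1^{-1}(G,\theta)\otimes\Omega_{\hat X}$, as required; the induced connection makes $C_1^{-1}(G,\theta)$ a de Rham submodule. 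I expect the main obstacle to be precisely the middle step: bringing the remainder down to $Fil^iM$ rather than merely $Fil^{i-1}M$. If it only sat in $Fil^{i-1}M$, the error term would land in $pM$, which need not be contained in $C_1^{-1}(G,\theta)$; getting the sharper statement forces one to use simultaneously the sub-Higgs property on the top graded piece $E^{i-1,n-i+1}$ and the $\nabla_0$-invariance of $C_0^{-1}(G,\theta)_0$ together with the explicit shape of its induced filtration on the lower pieces.
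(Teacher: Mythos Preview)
Your argument is correct and follows essentially the same route as the paper: use horizontality of $\Phi$ to bring $\nabla$ inside, use the sub-Higgs property to split off the leading term $b\,\tilde g^{i-1,n-i+1}$, and then control the remainder $r_j\in Fil^iM$ via its mod $p$ reduction landing in $C_0^{-1}(G,\theta)_0$. The only cosmetic difference is in the last step: the paper leaves the remainder intact, observes $\frac{\Phi}{p^{i-1}}(r_j\otimes 1)\in pM$, and reduces (via Lemma~\ref{simple lineare algebra lemma}) to the char $p$ statement $\frac{\Phi}{p^{i}}(r_{j,0}\otimes 1)\in C_0^{-1}(G,\theta)_0$, which is exactly Proposition~\ref{basic property of C_0^{-1}}; you instead lift the expansion of $r_{j,0}$ in the basis $\{\tilde g_0^{k,n-k}\}_{k\ge i}$ to $W_2$ and kill the residual $p\omega_j$ by $p^2M=0$, which amounts to unwinding that same reduction by hand.
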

\begin{proof}
Without loss of generality, we assume $X$ to be a curve. Take a local coordinate $t$ of $X$, i.e. $\Omega_{X|M}=\sO_X\{dt\}$, and set $\partial=\frac{d}{dt}$. It suffices to show that for each $0\leq i\leq n$,
$$
\nabla_{\partial}(\frac{\Phi}{p^{i}}(\tilde
g^{i,n-i}\otimes 1))\in C_{1}^{-1}(G,\theta).
$$
The horizontal property of $\Phi$ can be explicitly expressed by
$$
\nabla_{\partial}[\frac{\Phi}{p^i}(\tilde
g^{i,n-i}\otimes 1)]=\frac{\Phi}{p^{i-1}}[\nabla_{\partial}(\tilde g^{i,n-i})\otimes a],
$$
for an $a\in \sO_{X_1}$. We shall show that
$$
\frac{\Phi}{p^{i-1}}[\nabla_{\partial}(\tilde g^{i,n-i})\otimes 1]\in
C_{1}^{-1}(G,\theta).
$$
The assumption that $G\subset E$ is a Higgs subbundle means that $G$
is invariant under the action of
$\theta_{\partial}=Gr_{Fil^{\cdot}}\nabla_{\partial}$. Thus one
finds a unique $b\in \sO_{X_1}$ such that
$$
\nabla_{\partial}(\tilde g^{i,n-i}) \mod Fil^iM=bg^{i-1,n-i+1}.
$$
It follows then
$$
\omega^{i,n-i}:=\nabla_{\partial}(\tilde g^{i,n-i})-b\tilde g^{i-1,n-i+1}\in Fil^{i}M.
$$
As clearly
$$
\frac{\Phi}{p^{i-1}}(b\tilde g^{i-1,n-i+1}\otimes 1)\in C_1^{-1}(G,\theta),
$$
it suffices to show
$$
\frac{\Phi}{p^{i-1}}(\omega^{i,n-i}\otimes 1)\in C_1^{-1}(G,\theta).
$$
As $\frac{\Phi}{p^{i-1}}(\omega^{i,n-i}\otimes 1)\in pM$, it is equivalent to show
$$
\frac{\Phi}{p^{i}}(\omega_0^{i,n-i}\otimes 1)\in [C_1^{-1}(G,\theta)]_0=C_0^{-1}(G,\theta)_0.
$$
The equivalence is clear from Lemma \ref{simple lineare algebra lemma}, which is elementary. Now that
$$
\omega_0^{i,n-i}=\nabla_{\partial}(\tilde g_0^{i,n-i})-b_0\tilde g_0^{i-1,n-i+1},
$$
with $\tilde g_0^{i,n-i}\in C_0^{-1}(G,\theta)_0$ and $C_0^{-1}(G,\theta)_0$ is $\nabla$-invariant, one has
$$
\omega_0^{i,n-i}\in C_0^{-1}(G,\theta)_0.
$$
Finally, because $(G,\theta)_0$ is periodic of period one,
$$
\frac{\Phi}{p^{i}}(\omega_0^{i,n-i}\otimes 1)\in C_0^{-1}(G,\theta)_0
$$
by Proposition \ref{basic property of C_0^{-1}}. This shows the claim.
\end{proof}
\end{proof}
Now we are going to show Theorem \ref{C_1^{-1}} by assuming the global existence of $C_1^{-1}$.
\begin{proof}
It follows from the strong $p$-divisibility of $\Phi$ that
$C_1^{-1}(G,\theta)$ is free $\sO_{X_1}$-module of the same rank as
$G$. Note that the set of elements $\{g_0^{i,n-i}\}$ is a basis of
$G_0$. Then we have
$$
(C_1^{-1}(G,\theta))_0= \Span_{\sO_{X_0}}[\frac{\Phi_{F_{\hat
X}}}{p^{i}}(\tilde g_0^{i,n-i}\otimes 1),\ 0\leq i\leq n],
$$
which is exactly $C_{0}^{-1}(G,\theta)_0$ by its very construction.
Now assume furthermore
$$
Gr_{Fil^{\cdot}}\circ C_1^{-1}(G,\theta)=(G,\theta).
$$
Then we can take $\{\tilde g^{i,n-i}\}$ of Lemma \ref{existence of
liftings} to be a basis of $C_1^{-1}(G,\theta)$, and the $\tilde
\Phi$-stability of $C_1^{-1}(G,\theta)$ is just the definition of
$C_1^{-1}$.
\end{proof}
The assumption of $(G,\theta)$ for the existence of
$C_1^{-1}(G,\theta)$ can be relaxed. In fact, using the same
technique in the reduction step from a general period to the period
one case, we can show the following
\begin{proposition}\label{extension of inverse cartier transform to the larger set}
Let $(G,\theta)\subset (E,\theta)_1$ be a Higgs subbundle whose
modulo $p$ reduction is a periodic Higgs subbundle in
$(E,\theta)_0$. Then there exists $C_1^{-1}(G,\theta)\subset
(M,\nabla)_1$ with the same rank as $G$ satisfying the equality
$$
(C_1^{-1}(G,\theta))_0=C_0^{-1}(G,\theta)_0.
$$
\end{proposition}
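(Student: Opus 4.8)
The plan is to reprise, essentially verbatim, the local construction of Proposition and Definition~\ref{local cartier}, replacing throughout the de Rham subbundle $C_0^{-1}(G,\theta)_0$ by a suitable other member of the $C_0^{-1}$-orbit of $(G,\theta)_0$. Write $r$ for the period of $(G,\theta)_0$ and set, for $0\le j\le r$,
$$
(G^{(j)},\theta)_0:=(Gr_{Fil^{\cdot}}\circ C_0^{-1})^j(G,\theta)_0 ,
$$
so that $(G^{(r)},\theta)_0=(G,\theta)_0$. The one observation that makes the whole argument go through is that, although $Gr_{Fil^{\cdot}}\circ C_0^{-1}(G,\theta)_0=(G^{(1)},\theta)_0\neq(G,\theta)_0$ once $r>1$, the de Rham subbundle $C_0^{-1}(G^{(r-1)},\theta)_0\subset(M,\nabla)_0$ does have grading $(G^{(r)},\theta)_0=(G,\theta)_0$. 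This subbundle is precisely the datum furnished by the reduction-to-period-one device in the proof of Theorem~\ref{char p periodic case} (it is, up to the isomorphisms $\alpha_i$, the $0$th eigencomponent of $C_0^{-1}$ applied to the period-one cycle $\bigoplus_i(G^{(i)},\theta)_0$ inside $(E,\theta)_0^{\oplus r}$), so replacing $C_0^{-1}(G,\theta)_0$ by it is exactly "the same technique in the reduction step".

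Concretely, on a small affine $U$ with Frobenius lifting $F_{\hat U}$ and a basis $\{g^{i,n-i}\}$ of $G$, I would run the proof of Lemma~\ref{existence of liftings} with $C_0^{-1}(G,\theta)_0$ replaced by $C_0^{-1}(G^{(r-1)},\theta)_0$ to produce liftings $\tilde g^{i,n-i}\in Fil^iM$ with $\tilde g^{i,n-i}\equiv g^{i,n-i}\pmod{Fil^{i+1}M}$ and $\tilde g^{i,n-i}\bmod pM\in C_0^{-1}(G^{(r-1)},\theta)_0$ — the step producing the second condition works precisely because $Gr_{Fil^{\cdot}}\circ C_0^{-1}(G^{(r-1)},\theta)_0=(G,\theta)_0$ — and then set
$$
C_1^{-1}(G,\theta)|_U:=\Span_{\sO_{X_1}}\Big[\tfrac{\Phi_{F_{\hat U}}}{p^i}(F_{\hat U}^*\tilde g^{i,n-i}),\ 0\le i\le n\Big].
$$
Strong $p$-divisibility of $\Phi$ makes this $\sO_{X_1}$-free of rank $\rank G$, and, since $\{\tilde g^{i,n-i}\bmod p\}$ are liftings of the basis $\{g^{i,n-i}_0\}$ of $G_0$ modulo $Fil^{i+1}M_0$, the local formula for the inverse Cartier transform (\cite{SXZ}) gives $(C_1^{-1}(G,\theta)|_U)_0=C_0^{-1}(G,\theta)_0|_{U_0}$; note the extra membership condition on $\tilde g^{i,n-i}\bmod p$ plays no role in this identity.

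It then remains to check independence of the basis and the liftings — this reruns the induction of Claim~\ref{independece of choice of liftings}, once one notes that the two conditions force $\{\tilde g^{i,n-i}_0\}$ to be a filtered basis of $C_0^{-1}(G^{(r-1)},\theta)_0$, so that $Fil^{i+1}C_0^{-1}(G^{(r-1)},\theta)_0=\Span[\tilde g^{n,0}_0,\dots,\tilde g^{i+1,n-i-1}_0]$ — together with independence of $F_{\hat U}$, which is the Taylor-formula argument of \S\ref{globalization of lifting of inverse cartier transform}; these two make the local pieces glue to a global $C_1^{-1}(G,\theta)$ with the asserted rank and reduction. The substantive point is $\nabla$-invariance: as in Proposition and Definition~\ref{local cartier} one writes, using $\theta$-stability of $G\subset E$, $\omega^{i,n-i}=\nabla_{\partial}(\tilde g^{i,n-i})-b\,\tilde g^{i-1,n-i+1}\in Fil^iM$, reduces to showing $\tfrac{\Phi}{p^i}(\omega^{i,n-i}_0\otimes1)\in C_0^{-1}(G,\theta)_0$, and observes that $\tilde g^{i,n-i}_0,\tilde g^{i-1,n-i+1}_0$ lie in the $\nabla$-invariant bundle $C_0^{-1}(G^{(r-1)},\theta)_0$, whence $\omega^{i,n-i}_0\in Fil^iC_0^{-1}(G^{(r-1)},\theta)_0$ and $\tfrac{\Phi}{p^i}(\omega^{i,n-i}_0\otimes1)\in\tilde\Phi_{F_{\hat U}}(C_0^{-1}(G^{(r-1)},\theta)_0)$. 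The one ingredient I expect to need proper justification is the identity $\tilde\Phi_{F_{\hat U}}(C_0^{-1}(H))=C_0^{-1}(Gr_{Fil^{\cdot}}\circ C_0^{-1}(H))$ for a subsystem of Hodge bundles $H$ — it turns the last membership into $\tfrac{\Phi}{p^i}(\omega^{i,n-i}_0\otimes1)\in C_0^{-1}(G^{(r)},\theta)_0=C_0^{-1}(G,\theta)_0$, and it is exactly the extension of Proposition~\ref{basic property of C_0^{-1}} beyond period one, which should follow from the local formula for $C_0^{-1}$ recalled above. Specializing to $r=1$ recovers Theorem~\ref{C_1^{-1}}.
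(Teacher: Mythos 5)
Your proposal is correct, and the decisive local modification is exactly the paper's: both replace condition (ii) of Lemma \ref{existence of liftings} by the requirement $\tilde g^{i,n-i}\bmod p\in C_0^{-1}\bigl[(Gr_{Fil^{\cdot}}\circ C_0^{-1})^{r-1}(G,\theta)_0\bigr]$, which is possible precisely because that de Rham subbundle has grading $(G,\theta)_0$, and both then take the same span $\Span[\frac{\Phi_{F_{\hat U}}}{p^i}(F_{\hat U}^*\tilde g^{i,n-i})]$. Where you diverge is in the verification of well-definedness, $\nabla$-invariance, independence of $F_{\hat U}$ and gluing: the paper (working out $r=2$) embeds everything into $(M,\nabla)^{\oplus r}$ via the lifted eigen-decomposition of Lemma \ref{lemma on eigendecomposition of s_MF}, observes that the relevant mod-$p$ object $\bigoplus_i\beta_i[(Gr_{Fil^{\cdot}}\circ C_0^{-1})^i(G,\theta)_0]$ is periodic of period one there, and lets the old proofs for Theorem \ref{C_1^{-1}} run verbatim on the direct sum; you instead rerun those proofs directly on $(M,\nabla)_1$ with $C_0^{-1}(G,\theta)_0$ replaced by $C_0^{-1}(G^{(r-1)},\theta)_0$ throughout, and correctly isolate the single new ingredient this requires, namely the orbit-level identity $\tilde\Phi_{F_{\hat U}}(C_0^{-1}(H))=C_0^{-1}(Gr_{Fil^{\cdot}}\circ C_0^{-1}(H))$, which indeed follows from the local formula for $C_0^{-1}$ exactly as in Proposition \ref{basic property of C_0^{-1}} (and which the paper also uses, in the guise of its direct check that $[\tilde\Phi(C_1^{-1}(G,\theta)|_{U_1})]_0=[C_0^{-1}(Gr_{Fil^{\cdot}}\circ C_0^{-1}(G,\theta)_0)]|_{U_0}$). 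Your route is more self-contained and avoids the $r$-fold direct-sum bookkeeping for this proposition; the paper's route buys literal reuse of the period-one proofs and keeps the argument aligned with the reduction device already set up for Theorem \ref{char p periodic case}. Either way the conclusion $(C_1^{-1}(G,\theta))_0=C_0^{-1}(G,\theta)_0$ and the rank statement come out the same.
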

\begin{remark}
In the above proposition as well as Theorem \ref{C_1^{-1}},
we have assumed that $(G,\theta)\subset (E,\theta)_1$ to be a subsystem of Hodge bundles. However, this assumption is not necessary. In fact, $C_1^{-1}(G,\theta)$ exists for \emph{any} Higgs subbundle of $(E,\theta)_1$ with the periodic condition in char $p$.
\end{remark}
The detail of the proof is postponed to \S\ref{globalization
of lifting of inverse cartier transform}, because it is
more urgent to note that we are already in an inductive situation: \\

For a non $p$-torsion $M\in \mathcal{MF}^\nabla$, we define
inductively the set of periodic Higgs subbundles of $(E,\theta)_m$
and a lifting of the inverse Cartier transform $C_{m+1}^{-1}$ over
$W_{m+1}:=W_{m+1}(k)$, where $m$ runs from zero to infinity. Based on the
inverse Cartier transform $C_0^{-1}$ of Ogus and Vologodsky, we have
defined previously the set of periodic Higgs subbundles in
$(E,\theta)_0$. The last proposition asserts that the inverse
Cartier transform lifts to an operator over $W_2$ for those Higgs
subbundles of $(E,\theta)_1$ whose modulo $p$ reduction are
periodic. Thus we make the following
\begin{definition}[Periodic Higgs subbundles over $W_2$]
Notation as above. A Higgs subbundle $(G,\theta)$ of $(E,\theta)_1$
is periodic if there two natural numbers $r_0,r_1$ such that
\begin{itemize}
    \item [(1)] $(Gr_{Fil^{\cdot}}\circ
    C_0^{-1})^{r_0}(G,\theta)_0=(G,\theta)_0$ and then,
    \item [(2)] $(Gr_{Fil^{\cdot}}\circ
    C_1^{-1})^{r_1}(G,\theta)=(G,\theta)$ hold.
\end{itemize}
\end{definition}
Via a direct generalization of the construction of $C_1^{-1}$, one
defines a further lifting $C_2^{-1}$ over $W_3$ for the set of Higgs
subbundles of $(E,\theta)_2$ whose modulo $p^2$ reduction are
periodic in the above sense, and then the set of periodic Higgs
subbundles of $(E,\theta)_2$, and so on. This process culminates
with the following
\begin{definition}[Periodic Higgs subbundles over $W_{m+1}$ and $W$]\label{definition of periodic Higgs subbundle at
formal level} A Higgs subbundle $(G,\theta)\subset (E,\theta)_m$ is
periodic if there are a sequence of natural numbers $\{r_i\}_{0\leq
i\leq m}$ such that inductively from $i=0$ to $i=m$ the equality
$$
(Gr_{Fil^{\cdot}}\circ C_{i}^{-1})^{r_i}(G,\theta)_i=(G,\theta)_i
$$
holds. A Higgs subbundle of $(E,\theta)$ is periodic if its
reduction in $(E,\theta)_m$ is periodic for all $m\geq 0$.
\end{definition}
For a periodic Higgs subbundle $(G,\theta)$, we list the periods of
$(G,\theta)_m$ into a sequence of natural numbers $r_0,r_1,\cdots$.
Clearly, $r_i$ divides $r_j$ for $i>j$. Since the numbers of Higgs
subbundles in $(E,\theta)_m$ are bounded by a constant independent
of $m$, the above sequence is stable after finitely many terms. Thus
a Higgs subbundle $(G,\theta)$ is periodic iff there exists a
natural number $r$ such that
$$
(Gr_{Fil^{\cdot}}\circ C_{m}^{-1})^{r}(G,\theta)_m=(G,\theta)_m, \
\forall m\geq 0.
$$
Now we come to the proof of Theorem \ref{theorem on one to one
correspondence for periodic points}.
\begin{proof}
Our lifting $C_{m+1}^{-1}$ of the inverse Cartier transform lifts
the basic property of $C_0^{-1}$ as given in Proposition \ref{basic
property of C_0^{-1}}. For $m=0$, this is a part of statements in
Theorem \ref{C_1^{-1}}, and its proof generalizes directly to a
general $m$. Using the same argument as in Corollary \ref{fixed char
p case}, one shows the one to one correspondence between the set of
subrepresentations of $\V$ and the set of periodic Higgs subbundles
of $(E,\theta)$ of period one by identifying both with the set of
$\tilde \Phi$-stable de Rham subbundles of $M$. To show the general
case, we note first that Lemma \ref{lemma on elementary number
theory} and its consequent lemmas hold over $\Z_{p^r}$. So the
reduction step to the period one case as carried in the proof of
Theorem \ref{char p periodic case} can be applied to the mixed
characteristic situation as well. This shows the theorem.
\end{proof}

\section{Further applications}\label{further applications}
In this section, $X$ is assumed to be smooth projective over $W$ throughout. Notations as before. We start with a direct consequence of Theorem \ref{char p periodic case}.
\begin{proposition}\label{semisimple implies polystability}
If $\V\otimes k$ is semisimple, then $(E,\theta)_0$ is polystable.
\end{proposition}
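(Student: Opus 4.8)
The plan is to deduce polystability from the one-to-one correspondence of Theorem \ref{char p periodic case} together with the semisimplicity of $\V\otimes k$, by running an induction on $\rank E$. Since $(E,\theta)_0$ is already known to be Higgs semistable of slope zero by Proposition 0.2 \cite{SXZ}, the only point is to produce, whenever $(E,\theta)_0$ is not stable, a direct-sum decomposition into Higgs stable pieces of slope zero.

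First I would observe that if $(E,\theta)_0$ is stable there is nothing to prove, so assume it is not. By Theorem \ref{irreducibility implies stability} this forces $\V\otimes k$ to be reducible, hence $\V\otimes \F_{p^r}$ is reducible for some $r\in\N$; since $\V\otimes k$ is semisimple, so is $\V\otimes\F_{p^r}$, and we may pick a nonzero proper $\F_{p^r}$-subrepresentation $\W\subset\V\otimes\F_{p^r}$ that is irreducible and admits an $\F_{p^r}$-complement $\W'$. By Theorem \ref{char p periodic case}, $\W$ corresponds to a periodic Higgs subbundle $(G,\theta)\subset(E,\theta)$ whose period divides $r$, and likewise $\W'$ corresponds to a periodic Higgs subbundle $(G',\theta)$. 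The content of the correspondence is that it is compatible with the functors of Corollary \ref{fixed char p case}; in particular, taking $Gr_{Fil^\cdot}\circ\D^t$ of the internal direct sum $\W\oplus\W'=\V\otimes\F_{p^r}$, and using that the grading functor commutes with finite direct sums, one gets $(G,\theta)_0\oplus(G',\theta)_0=(E,\theta)_0$ as Higgs bundles over $X_0$ (the passage from the $\F_{p^r}$-picture back to $X_0$ is exactly the reduction step used in the proof of Theorem \ref{char p periodic case}, where a period-$r$ subbundle over $X_0$ is encoded by a period-one subbundle of $M^{\oplus r}$ and conversely). By Proposition \ref{perodic implies Higgs semistable of slope zero}, both $(G,\theta)_0$ and $(G',\theta)_0$ are Higgs semistable of slope zero.

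Next I would push the induction through on $(G,\theta)_0$ and $(G',\theta)_0$ separately. The subtlety is that these need not themselves be ``of the form $(E',\theta')$ associated to some $M'\in\mathcal{MF}^\nabla$'' a priori — but they are: $(G,\theta)$, being periodic of period one after replacing $r$ by a multiple, corresponds via Corollary \ref{fixed char p case} (applied to $M^{\oplus r}$ and the eigen-decomposition of $s_{MF}$, exactly as in the proof of Theorem \ref{char p periodic case}) to a $\tilde\Phi$-stable de Rham subbundle, hence to a genuine subobject $M_\W$ of an object in $\mathcal{MF}^\nabla$ with $\D^t(M_\W)=\W$ and $Gr_{Fil^\cdot}(M_\W)=(G,\theta)$; and $\W$ was chosen irreducible. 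So by Theorem \ref{irreducibility implies stability} applied to $M_\W$, the Higgs bundle $(G,\theta)_0$ is Higgs stable. The same argument applied to each irreducible summand of $\W'$ (again using semisimplicity of $\V\otimes k$, so that $\W'$ further decomposes into irreducibles) shows that $(G',\theta)_0$ decomposes as a direct sum of Higgs stable bundles of slope zero. Assembling these gives $(E,\theta)_0=\bigoplus_j (E_j,\theta_j)$ with each $(E_j,\theta_j)$ Higgs stable of slope zero, which is the definition of Higgs polystability.

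The main obstacle I expect is bookkeeping around base fields: the decomposition $\W\oplus\W'$ lives over $\F_{p^r}$, not over $k$, so I must make sure the corresponding Higgs decomposition descends to $(E,\theta)_0$ over $X_0$ and not merely over $X_0\otimes_{\F_p}\F_{p^r}$ or some Frobenius twist. The resolution is precisely the identification $\V^{\oplus r}=\V\otimes_{\F_p}\F_{p^r}$ and the $\beta_i$-bookkeeping from Lemmas \ref{lemma on eigendecomposition of s_MF}--\ref{easy lemma for beta}: a periodic Higgs subbundle over $X_0$ of period dividing $r$ is, by construction in the proof of Theorem \ref{char p periodic case}, genuinely a subbundle of $(E,\theta)$ over $X_0$, and taking the full collection of $\sigma$-translates recovers an honest $\F_{p^r}$-structure; applying this to a direct-sum decomposition of $\V\otimes\F_{p^r}$ therefore yields a direct-sum decomposition of $(E,\theta)_0$ over $X_0$. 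One should also note that stability, slope, and rank are preserved under the operator $Gr_{Fil^\cdot}\circ C_0^{-1}$ (as already remarked in the proof deducing Theorem \ref{irreducibility implies stability}), so the stable summands are genuinely $\theta$-invariant Higgs subbundles of $(E,\theta)_0$ and not merely of some twist.
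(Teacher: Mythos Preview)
Your overall strategy---decompose $\V\otimes\F_{p^r}$ as a direct sum, push the decomposition through the correspondence of Theorem~\ref{char p periodic case} (which respects direct sums), and then argue each resulting Higgs factor is stable---is exactly the paper's approach, and your attention to why the Higgs decomposition lives over $X_0$ rather than a twist is well-placed. The paper simply decomposes $\V\otimes\F_{p^r}$ all at once into \emph{absolutely} irreducible summands and dispenses with the induction; your inductive scaffolding is harmless but unnecessary.

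There is, however, a genuine gap in the step where you deduce that $(G,\theta)_0$ is stable by ``Theorem~\ref{irreducibility implies stability} applied to $M_\W$''. The subobject $M_\W\subset M^{\oplus r}$ you construct has $Gr_{Fil^\cdot}(M_\W)=\bigoplus_{i=0}^{r-1}\beta_i[(Gr_{Fil^\cdot}\circ C_0^{-1})^i(G,\theta)_0]\subset(E,\theta)_0^{\oplus r}$, which has rank $r\cdot\rank G$, not $(G,\theta)_0$ itself; and $\D^t(M_\W)\otimes_{\F_p}k\cong\bigoplus_{i=0}^{r-1}\W^{\sigma^i}\otimes_{\F_{p^r}}k$ is never irreducible for $r>1$. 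So Theorem~\ref{irreducibility implies stability} applied to $M_\W$ correctly says $Gr_{Fil^\cdot}(M_\W)$ is \emph{not} stable, and tells you nothing directly about $(G,\theta)_0$. The fix (implicit in the paper's ``by a similar argument'') is to take $\W$ absolutely irreducible and rerun the \emph{argument} rather than the statement: if $(G,\theta)_0$ were not stable, pick a proper slope-zero Higgs subbundle $(H,\theta)\subsetneq(G,\theta)_0$; its iterates $(Gr_{Fil^\cdot}\circ C_0^{-1})^{rj}(H,\theta)$ remain inside $(G,\theta)_0$ since the operator preserves inclusions and $(G,\theta)_0$ has period dividing $r$; by finiteness one iterate is periodic, and the correspondence then produces a proper subrepresentation of $\W\otimes\F_{p^{rr'}}$, contradicting absolute irreducibility.
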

\begin{proof}
The assumption implies the decomposition
$$
\V\otimes \F_{p^r}=\oplus_i\V_i
$$
into direct sum of $\F_{p^r}$-representations whose direct factors are all absolutely irreducible.
It is clear that the correspondence in Theorem \ref{char p periodic case} respects direct sum. So we obtain a corresponding decomposition
$$
(E,\theta)_0=\oplus_i(G_i,\theta_i)
$$
into direct sum of periodic Higgs subbundles. Each factor ought to be stable, since the corresponding factor is
otherwise not absolutely irreducible by a similar argument in Theorem \ref{irreducibility implies stability}.
\end{proof}
We would like to have the converse statement of the above proposition. What we have obtained is a conditional result.
The proof of the following lemma is identical to that for a semistable bundle of degree
zero, which is standard.
\begin{lemma}\label{JH filtration}
The following statements hold for $(E,\theta)_0$:
\begin{itemize}
    \item [(i)] there is a filtration $0=(G_0,\theta_0)\subset (G_1, \theta_1)\subset
\cdots\subset (G_r,\theta_r)=(E,\theta)_0$ by Higgs subbundles such
that the quotient $(G_i,\theta_i)/(G_{i-1},\theta_{i-1})$ is Higgs
stable and $\deg (G_i)=0$ for $1\leq i\leq r$,
    \item [(ii)] if $
0=(G'_0,\theta'_0)\subset (G'_1, \theta'_1)\subset \cdots\subset
(G'_s,\theta'_s)=(E,\theta)_0$ is another filtration enjoying the
properties of (i), then $r=s$ and there exists a permutation $\tau$
of $\{1,\cdots,r\}$ such that
$(G_i,\theta_i)/(G_{i-1},\theta_{i-1})$ is isomorphic to
$(G'_{\tau(i)},\theta'_{\tau(i)})/(G'_{\tau(i)-1},\theta_{\tau(i)-1})$.
\end{itemize}
\end{lemma}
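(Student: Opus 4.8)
The plan is to run the classical Jordan--H\"older argument inside the abelian category of Higgs-semistable sheaves of slope zero on $X_0$, with slopes taken with respect to the fixed polarization $Z_0$. Write $\mathcal{C}$ for the full subcategory of Higgs sheaves on $X_0$ whose objects are Higgs-semistable of $\mu_{Z_0}$-slope zero; recalling that $(E,\theta)_0$ is Higgs semistable of slope zero (Proposition~0.2 of \cite{SXZ}), it is an object of $\mathcal{C}$. The first step is to verify that $\mathcal{C}$ is abelian. If $f\colon(A,\theta_A)\to(B,\theta_B)$ is a morphism of Higgs sheaves with $A,B\in\mathcal{C}$, then $\mathrm{im}(f)$ is a $\theta_B$-invariant subsheaf of $B$, so $\mu(\mathrm{im} f)\leq 0$, and it is a Higgs quotient of $A$, so $\mu(\mathrm{im} f)\geq 0$; hence $\mathrm{im}(f)$ has slope zero and, being sandwiched in this way, is semistable. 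The standard slope inequalities in short exact sequences then force $\ker f$ and $\mathrm{coker}f$ to be semistable of slope zero as well, so $\mathcal{C}$ is closed under kernels, cokernels and extensions. The Higgs field enters only through the requirement that all subobjects in sight be $\theta$-invariant, which does not disturb any slope inequality, so this is word for word the argument in the vector-bundle case.

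The second step records the two facts that make $\mathcal{C}$ a finite-length category whose simple objects are exactly the Higgs-stable objects. First, a nonzero Higgs morphism between two Higgs-stable objects of slope zero in $\mathcal{C}$ is an isomorphism: its image is a nonzero Higgs subsheaf of slope zero of the target, hence the whole target by stability, and a nonzero Higgs quotient of slope zero of the source, hence the source maps in isomorphically --- this is the Schur property. Second, every object of $\mathcal{C}$ has rank at most $\rank(E)$, hence finite length; in particular so does $(E,\theta)_0$.

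With these in hand, part (i) is the existence of a composition series: take $(G_1,\theta_1)\subset(E,\theta)_0$ a Higgs subbundle of slope zero of least positive rank, which is then automatically Higgs stable (any proper nonzero Higgs subsheaf of slope zero would have smaller rank, contradicting minimality after saturating inside $E$), and repeat on $(E,\theta)_0/(G_1,\theta_1)\in\mathcal{C}$; ranks strictly drop so the process terminates, each successive quotient is Higgs stable of slope zero, and since degrees are additive in short exact sequences and $\deg(G_1)=0$ one gets $\deg(G_i)=0$ for all $i$ by induction. Part (ii) is then the Jordan--H\"older theorem for the abelian length category $\mathcal{C}$: given two such filtrations of $(E,\theta)_0$, the usual induction on the length --- using the Schur property of the second step to detect when a simple subquotient of one filtration is isomorphic to one of the other, together with the Noether (Zassenhaus) refinement argument --- produces the permutation $\tau$ matching the subquotients up to isomorphism.

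The only point requiring any real care is the first step, that semistability of slope zero is inherited by kernels and cokernels of Higgs morphisms; everything afterwards is the formal machinery of Jordan--H\"older in abelian categories of finite length. I expect no difficulty beyond the classical case, which is precisely why the lemma is stated as being proved by the standard argument.
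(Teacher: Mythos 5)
Your proposal is correct and is exactly the argument the paper has in mind: the paper simply states that the proof is identical to the standard Jordan--H\"older argument for semistable bundles of degree zero, and you have spelled out that standard argument (the abelian, finite-length category of slope-zero semistable Higgs sheaves, Schur's lemma for stable objects, and the usual refinement induction). No discrepancy to report.
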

A filtration in (i) is called a Jordan-H\"{o}lder (abbreviated as JH) filtration of $(E,\theta)_0$. Put
$$
gr(E,\theta)_0=\oplus_{i=1}^{r}(G_i,\theta_i)/(G_{i-1},\theta_{i-1}),
$$
which is independent of the choice of a JH filtration. The number $r$ in the above expression is said to be the length of a JH filtration on $(E,\theta)_0$.
\begin{assumption}\label{nonisomorphic components in the grading}
Assume the factors in $gr(E,\theta)_0$ are non-isomorphic to each other.
\end{assumption}
\begin{proposition}\label{partial converse}
Assume \ref{nonisomorphic components in the grading}. If $(E,\theta)_0$ is polystable, then $\V\otimes k$ is semisimple.
\end{proposition}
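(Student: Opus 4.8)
The plan is to prove that the operator $\Psi:=Gr_{Fil^\cdot}\circ C_0^{-1}$ permutes the stable summands of the polystable decomposition of $(E,\theta)_0$, to deduce that each summand is a periodic Higgs subbundle, and then to apply Theorem \ref{char p periodic case} to split $\V\otimes k$ into irreducibles. First I would fix notation: since $(E,\theta)_0$ is Higgs semistable of slope zero (Proposition 0.2 of \cite{SXZ}) and is assumed polystable, we may write $(E,\theta)_0=\bigoplus_{i=1}^r(G_i,\theta_i)$ with each $(G_i,\theta_i)$ Higgs stable of slope zero; by Lemma \ref{JH filtration}, $r$ is the JH length, and by Assumption \ref{nonisomorphic components in the grading} the $(G_i,\theta_i)$ are pairwise non-isomorphic. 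Inside the abelian category of semistable Higgs sheaves of slope zero on $X_0$ the object $(E,\theta)_0$ is therefore semisimple and multiplicity-free, so its subobjects form a Boolean lattice: every semistable slope-zero Higgs subsheaf equals $\bigoplus_{i\in S}(G_i,\theta_i)$ for a unique $S\subseteq\{1,\dots,r\}$, the atoms being exactly the $(G_i,\theta_i)$.

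Next I would check that $\Psi$ carries a semistable slope-zero Higgs subbundle of $(E,\theta)_0$ to another one of the same rank. Rank is preserved by the identity $[C_0^{-1}(G)]=[F_{X_0}^*G]$ in $K_0(X_0)$ from Remark \ref{remark on quasiperiodic}, and slope zero because $\mu(\Psi(G,\theta))=p\,\mu(G,\theta)$ by Lemma 3.2 of \cite{SXZ}. Semistability of $\Psi(G,\theta)$ then follows from the ambient semistability of $(E,\theta)_0$: every Higgs subsheaf of $\Psi(G,\theta)$ is a Higgs subsheaf of $(E,\theta)_0$, hence of slope $\le 0$, so $\mu_{\max}(\Psi(G,\theta))\le 0=\mu(\Psi(G,\theta))$, which forces the Harder--Narasimhan filtration of $\Psi(G,\theta)$ to be trivial. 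Granting (i) that $C_0^{-1}$ carries the flat direct sum structure of $(M,\nabla)_0=\bigoplus_iC_0^{-1}(G_i,\theta_i)$ compatibly with the Hodge filtration, so that $\Psi$ is additive and $\Psi((E,\theta)_0)=(E,\theta)_0$, and writing $\Psi(G_i,\theta_i)=\bigoplus_{j\in S_i}(G_j,\theta_j)$, one gets that the $S_i$ are pairwise disjoint with $\bigsqcup_iS_i=\{1,\dots,r\}$; comparing ranks forces $|S_i|=1$ for every $i$. Hence $\Psi$ permutes the set $\{(G_i,\theta_i)\}$ by a permutation $\pi$, and taking $N$ a multiple of $\ord(\pi)$ gives $\Psi^N(G_i,\theta_i)=(G_i,\theta_i)$; thus each $(G_i,\theta_i)$ is a periodic Higgs subbundle of period dividing $N$.

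Finally, Theorem \ref{char p periodic case} provides an inclusion-preserving, direct-sum-respecting bijection between periodic Higgs subbundles of $(E,\theta)$ of period dividing $N$ and $\F_{p^N}$-subrepresentations of $\V\otimes\F_{p^N}$. Under it the decomposition $(E,\theta)_0=\bigoplus_{i=1}^r(G_i,\theta_i)$ corresponds to a decomposition $\V\otimes\F_{p^N}=\bigoplus_{i=1}^r\V_i$, and each $\V_i$ is irreducible over $\F_{p^N}$: a proper nonzero subrepresentation of $\V_i$ would correspond to a proper nonzero periodic Higgs subbundle of $(G_i,\theta_i)$, which by Proposition \ref{perodic implies Higgs semistable of slope zero} has slope zero, contradicting the stability of $(G_i,\theta_i)$. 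Hence $\V\otimes\F_{p^N}$ is semisimple, and since $k$ is perfect the extension $k/\F_{p^N}$ is separable, so $\V\otimes k=(\V\otimes\F_{p^N})\otimes_{\F_{p^N}}k$ is semisimple as well.

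The step I expect to be the real obstacle is (i) in the second paragraph: that $\Psi$ respects the direct sum $(E,\theta)_0=\bigoplus_i(G_i,\theta_i)$, equivalently that after applying $C_0^{-1}$ the induced Hodge filtration on $(M,\nabla)_0$ still splits along $\bigoplus_iC_0^{-1}(G_i,\theta_i)$. This is exactly where the careful construction of the inverse Cartier transform on Higgs subbundles --- the point deferred to the appendix --- must be invoked, and one should probably also observe first that the $(G_i,\theta_i)$ may be taken to be subsystems of Hodge bundles so that the explicit local description of $C_0^{-1}$ applies. Once this additivity is available, the rank bookkeeping forcing $|S_i|=1$, and the ascent of semisimplicity along the separable extension $k/\F_{p^N}$, are routine; and it is worth recording that Assumption \ref{nonisomorphic components in the grading} is used precisely to make the subobject lattice Boolean, so that $\Psi$ permuting isomorphism classes pins down the individual summands needed to invoke Theorem \ref{char p periodic case}.
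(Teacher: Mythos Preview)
Your plan reaches the right endgame --- show each $(G_i,\theta_i)$ is periodic, then apply Theorem \ref{char p periodic case} --- but the route through additivity of $\Psi=Gr_{Fil^\cdot}\circ C_0^{-1}$ has a genuine gap at the step you label (i), and the appendix does not close it. The paper states explicitly (in the sentence introducing Proposition \ref{the operator preserves the irreducibles}) that $\Psi$ does \emph{not} commute with direct sums in general: while $C_0^{-1}$ is additive, the ambient Hodge filtration on $(M,\nabla)_0=\bigoplus_iC_0^{-1}(G_i,\theta_i)$ need not split along this de Rham decomposition, so taking $Gr_{Fil^\cdot}$ can mix the summands. The appendix only identifies $C_0^{-1}$ on subobjects with the Ogus--Vologodsky functor; it says nothing about compatibility of the Hodge filtration with such a decomposition. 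In fact, establishing that splitting is exactly what the extra ordinarity hypothesis accomplishes in the proof of Proposition \ref{geometric case in char p}; under Assumption \ref{nonisomorphic components in the grading} alone it is not available, so your hoped-for reduction to the appendix does not go through.

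The paper sidesteps this via Proposition \ref{the operator preserves the irreducibles}: rather than deducing stability of $\Psi(G_i,\theta_i)$ from additivity and the Boolean lattice, one proves directly (Lemma \ref{the operator preserves JH filtration}) that $\Psi$ sends any JH filtration of $(E,\theta)_0$ to another JH filtration, so the image of a minimal step is already stable. With $\Psi(G_i,\theta_i)$ stable of slope zero, the composite $\Psi(G_i,\theta_i)\hookrightarrow (E,\theta)_0\twoheadrightarrow(G_j,\theta_j)$ is zero or an isomorphism, and Assumption \ref{nonisomorphic components in the grading} then pins down a unique $j$ with $\Psi(G_i,\theta_i)=(G_j,\theta_j)$ as an actual subbundle. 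Injectivity of $i\mapsto j$ is handled by a short filtration argument (if $\Psi(G_1)=\Psi(G_2)$ with $C_0^{-1}(G_1)\cap C_0^{-1}(G_2)=0$, one descends the induced Hodge filtration to force $C_0^{-1}(G_1)=C_0^{-1}(G_2)=0$), after which periodicity and Theorem \ref{char p periodic case} finish exactly as in your proposal.
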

Note that the operator $Gr_{Fil^{\cdot}}\circ C_0^{-1}$ does not
commute with direct sum in general, although $C_0^{-1}$ does. This
makes the problem subtle. We observe however the following property
of the operator, which implies the result under the assumption. If
we know that the operator is injective, then we can even remove the
assumption.
\begin{proposition}\label{the operator preserves the irreducibles}
Let $(G,\theta)\subset (E,\theta)_0$ be a Higgs subbundle of
degree zero. Then $Gr_{Fil^{\cdot}}\circ C_0^{-1}(G,\theta)$ is Higgs stable iff $(G,\theta)$ is Higgs stable.
\end{proposition}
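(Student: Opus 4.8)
The plan is the following. Write $T:=Gr_{Fil^{\cdot}}\circ C_0^{-1}$ for the operator on the set of degree zero Higgs subbundles of $(E,\theta)_0$; recall $\mu(T(G,\theta))=p\,\mu(G)$ by Lemma 3.2 \cite{SXZ}, so $T$ preserves the degree zero condition as well as the rank and the field of definition of a Higgs subbundle, and recall that $(E,\theta)_0$ is Higgs semistable of slope zero (\cite{SXZ}). The two implications will be treated separately; the first is soft, the second uses Theorem \ref{char p periodic case}.

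\emph{Step 1 (the easy implication).} First I would show that $T$ is monotone: if $(G',\theta')\subset(G,\theta)$ is a degree zero Higgs subbundle then so is $T(G',\theta')\subset T(G,\theta)$. Since $C_0^{-1}$ is exact (it is one half of the Ogus--Vologodsky equivalence on the relevant nilpotent subcategories, and it does carry Higgs subbundles to de Rham subbundles, cf.\ the appendix), $C_0^{-1}(G',\theta')$ is a de Rham subbundle of $C_0^{-1}(G,\theta)$; equipping all these with the filtrations induced from $M_0$ (which is transitive), the natural map $Gr^i(C_0^{-1}(G',\theta'))\to Gr^i(C_0^{-1}(G,\theta))$ is injective for every $i$ by a one-line check, so $T(G',\theta')$ is a Higgs submodule of $T(G,\theta)$; it has slope zero by the slope formula, hence is saturated in the semistable slope zero bundle $T(G,\theta)$, hence a genuine Higgs subbundle. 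Thus $T$ maps the poset of degree zero Higgs subbundles of $(G,\theta)$ into that of $T(G,\theta)$, preserving inclusions and ranks, and in particular $(G,\theta)$ not Higgs stable $\Rightarrow$ $T(G,\theta)$ not Higgs stable, i.e.\ $T(G,\theta)$ Higgs stable $\Rightarrow (G,\theta)$ Higgs stable.

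\emph{Step 2 (the converse).} Conversely, assume $(G,\theta)$ is Higgs stable of degree zero; here I would first reduce to the periodic case. Exactly as in the proof of Theorem \ref{irreducibility implies stability}, the finiteness of the set of Higgs subbundles of $(E,\theta)_0$ of a given rank, slope and field of definition makes the $T$-orbit of $(G,\theta)$ finite, so $(G,\theta)$ is quasi-periodic and some $T^{c}(G,\theta)$ is periodic. For a \emph{periodic} Higgs subbundle $(H,\eta)$, Theorem \ref{char p periodic case} --- applied to all $r$ divisible by its period --- together with the quasi-periodicity of its arbitrary degree zero Higgs subbundles, yields the clean dictionary: $(H,\eta)$ is Higgs stable if and only if the associated $\F_{p^r}$-representation $\W$ is absolutely irreducible over $k$; and, by Theorem \ref{char p periodic case} again, $T$ on periodic Higgs subbundles corresponds to $\sigma$-conjugation on $\W$, which preserves absolute irreducibility. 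Hence $(H,\eta)$ periodic and Higgs stable $\Rightarrow T(H,\eta)$ Higgs stable, and I would then propagate this back along the finitely many steps of the $T$-orbit joining $(G,\theta)$ to the periodic part, using Step 1 to control the intermediate terms.

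\emph{Main obstacle.} The delicate point is precisely that last transport in Step 2: since $Gr_{Fil^{\cdot}}$ is not injective on de Rham subbundles, $T$ need not be injective (as is noted around the statement of this Proposition), so one cannot simply ``run the orbit backwards'' from a periodic term to $(G,\theta)$. I expect the right remedy is to confine the whole discussion to the socle: the socle of $\V\otimes k$ is defined over a finite field and, by Theorem \ref{char p periodic case}, corresponds to the largest polystable --- hence periodic --- Higgs subbundle $\mathrm{Soc}\subset(E,\theta)_0$; since every Higgs stable degree zero Higgs subbundle of $(E,\theta)_0$ is contained in $\mathrm{Soc}$ and $\mathrm{Soc}$ is $T$-invariant, the argument never leaves the realm of periodic Higgs subbundles, where Theorem \ref{char p periodic case} applies directly. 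Verifying that this $\mathrm{Soc}$ is indeed the Higgs socle (the sum of all stable degree zero Higgs subbundles of $(E,\theta)_0$) is, I believe, where the real work lies; alternatively, a proof of the injectivity of $T$ on the pertinent finite set of degree zero Higgs subbundles would remove the difficulty altogether (and, as the text remarks, also strengthen Proposition \ref{partial converse}).
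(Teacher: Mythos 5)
Your Step 1 is fine and corresponds to the direction the paper dismisses as clear. The problem is the converse, and there your argument is not a proof: the reduction to the periodic case via Theorem \ref{char p periodic case} cannot be closed precisely because $T=Gr_{Fil^{\cdot}}\circ C_0^{-1}$ is not known to be injective, as you yourself point out, so knowing that $T^{c}(G,\theta)$ is periodic and that $T$ preserves stability \emph{of periodic} subbundles says nothing about the first application $T(G,\theta)$. The socle remedy you sketch is left unverified (you say so), and even if one grants that every stable degree-zero Higgs subbundle lies in a $T$-invariant periodic polystable $\mathrm{Soc}$, the conclusion does not follow: a degree-zero Higgs subbundle of a polystable degree-zero Higgs bundle need not be stable, so $T(G,\theta)\subset\mathrm{Soc}$ does not yield stability of $T(G,\theta)$. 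There is also a circularity risk: the dictionary ``stable $\Leftrightarrow$ absolutely irreducible'' for subbundles of $(E,\theta)_0$ is essentially what this proposition, together with Propositions \ref{semisimple implies polystability} and \ref{partial converse}, is in the business of establishing, so it cannot be invoked wholesale inside the proof.

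The paper's argument is entirely different and stays on the Higgs side, with no periodicity and no representations. One refines $(G,\theta)\subset(E,\theta)_0$ to a Jordan--H\"older filtration with $(G,\theta)$ as first step, and proves (Lemma \ref{the operator preserves JH filtration}) that $T$ carries a JH filtration of $(E,\theta)_0$ to another JH filtration: each graded piece of the image filtration is Higgs semistable of degree zero, and if one of them failed to be stable one could refine it and produce a JH filtration of $(E,\theta)_0$ of strictly greater length, contradicting the invariance of JH length in Lemma \ref{JH filtration}(ii). In particular the first graded piece $T(G,\theta)$ is Higgs stable. You should replace your Step 2 by this length-counting argument; your Step 1 monotonicity observation is exactly what makes the image filtration a filtration by Higgs subbundles, so it is not wasted.
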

We derive Proposition \ref{partial converse} first.
\begin{proof}
As $(E,\theta)_0$ is polystable, we write
$$
(E,\theta)_0=\oplus_i(G_i,\theta_i)
$$
into a direct sum of stable factors. Because of Proposition \ref{the
operator preserves the irreducibles}, $Gr_{Fil^{\cdot}}\circ
C_0^{-1}(G_i,\theta_i)$ is again stable of degree zero for each $i$.
For a chosen $i_0$, we consider the composite
$$
Gr_{Fil^{\cdot}}\circ C_0^{-1}(G_{i_0},\theta_{i_0})\subset
(E,\theta)_0\twoheadrightarrow (G_i,\theta_i).
$$
It is either zero or an isomorphism because of stability. By the assumption, there is a unique $j_0$ such that the composite onto $(G_{j_0},\theta_{j_0})$ is an isomorphism. It follows that
$$
Gr_{Fil^{\cdot}}\circ
C_0^{-1}(G_{i_0},\theta_{i_0})=(G_{j_0},\theta_{j_0}),
$$
and that the operator induces a map on the set of indices of direct factors. This map must be injective and hence bijective: assume the contrary and say
$$
Gr_{Fil^{\cdot}}\circ
C_0^{-1}(G_{1},\theta_{1})=Gr_{Fil^{\cdot}}\circ
C_0^{-1}(G_{2},\theta_{2}).
$$
For
$$
M_i:=C_0^{-1}(G_i,\theta_i), \quad Fil^{\cdot}_i:=Fil^{\cdot}M_i, \
i=1,2,
$$
it holds that $M_1\cap M_2=0$, and then the previous equality implies inductively
$$
Fil_1^n=Fil_2^n=0,\ Fil_1^{n-1}=Fil_2^{n-1}=0,\cdots, Fil^{0}_1=Fil^0_2=0,
$$
which is absurd. Therefore, each direct stable factor is periodic and by Theorem \ref{char p periodic case}, $\V\otimes k$ is a direct sum of irreducible representations, i.e, semisimple.
\end{proof}
One direction of Proposition \ref{the operator preserves the
irreducibles} is clear. Namely, if $Gr_{Fil^{\cdot}}\circ
C_0^{-1}(G,\theta)$ is stable, then $(G,\theta)$ must be stable. To
show the converse direction, we need a lemma.
\begin{lemma}\label{the operator preserves JH filtration}
Let $0=(G_0,\theta_0)\subset (G_1, \theta_1)\subset \cdots\subset
(G_r,\theta_r)=(E,\theta)_0$ be a JH filtration of
$(E,\theta)_0$. Then $$0=Gr_{Fil^{\cdot}}\circ
C_0^{-1}(G_0,\theta_0)\subset Gr_{Fil^{\cdot}}\circ C_0^{-1}(G_1,
\theta_1)\subset \cdots\subset Gr_{Fil^{\cdot}}\circ
C_0^{-1}(G_r,\theta_r)$$ is again a JH filtration of $(E,\theta)_0$.
\end{lemma}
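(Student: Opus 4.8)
The plan is to run the given Jordan--H\"older filtration through the exact functor $C_0^{-1}$, take Hodge gradings, and then recognize the resulting chain as a Jordan--H\"older filtration by a length count against Lemma \ref{JH filtration}. The length count is the right device here precisely because it avoids Proposition \ref{the operator preserves the irreducibles}, which is itself proved \emph{using} the present lemma.

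Set $\bar G_i=Gr_{Fil^\cdot}\circ C_0^{-1}(G_i,\theta_i)$ for $0\le i\le r$. Since $C_0^{-1}$ is an equivalence of categories (Ogus and Vologodsky), it is exact and rank-preserving, and it carries Higgs subbundles of $(E,\theta)_0$ to de Rham subbundles of $(M,\nabla)_0$ (the appendix); as the successive Higgs quotients $(G_i,\theta_i)/(G_{i-1},\theta_{i-1})$ are nonzero Higgs bundles, one gets a strictly increasing chain $0\subset C_0^{-1}(G_1,\theta_1)\subset\cdots\subset C_0^{-1}(G_r,\theta_r)$ of de Rham subbundles of $(M,\nabla)_0$. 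Equip each $C_0^{-1}(G_i,\theta_i)$ with the filtration induced from $Fil^\cdot M_0$; these induced filtrations are mutually compatible, so passing to gradings produces a chain of Higgs subsheaves $0=\bar G_0\subset\bar G_1\subset\cdots\subset\bar G_r$ of $(E,\theta)_0$ with $\rank\bar G_i=\rank C_0^{-1}(G_i,\theta_i)=\rank G_i$, hence strictly increasing of length $r$. By Lemma 3.2 of \cite{SXZ} one has $\mu(\bar G_i)=p\,\mu(G_i)=0$, so $\deg\bar G_i=0$ for every $i$. Using that $(E,\theta)_0$ is Higgs semistable of slope zero (Proposition 0.2 of \cite{SXZ}), a degree-zero Higgs subsheaf of it is saturated; hence each $\bar G_i$ is an honest Higgs subbundle, $\bar G_r$ (of full rank and degree zero) equals $(E,\theta)_0$, and each quotient $(\bar G_i,\theta)/(\bar G_{i-1},\theta)$ is a nonzero Higgs bundle of degree zero.

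It remains to see these quotients are Higgs \emph{stable}. Each $(\bar G_i,\theta)/(\bar G_{i-1},\theta)$, being a degree-zero semistable Higgs bundle, admits a Jordan--H\"older filtration by the standard argument underlying Lemma \ref{JH filtration}(i); inserting these refines $\{\bar G_i\}$ to a Jordan--H\"older filtration of $(E,\theta)_0$, which by Lemma \ref{JH filtration}(ii) has length exactly $r$. But $\{\bar G_i\}$ is already a strictly increasing chain of length $r$ refined by this length-$r$ Jordan--H\"older filtration, so no terms can have been inserted: each $(\bar G_i,\theta)/(\bar G_{i-1},\theta)$ is already Higgs stable, and $\{\bar G_i\}$ is a Jordan--H\"older filtration of $(E,\theta)_0$.

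I expect the real work to lie in the second paragraph — in showing that the Hodge grading $\bar G_i$ of the de Rham subbundle $C_0^{-1}(G_i,\theta_i)$ is itself a degree-zero Higgs subbundle of $(E,\theta)_0$ (local freeness of the graded pieces, the degree computation, saturation inside $(E,\theta)_0$) — which rests on the slope identity $\mu(Gr_{Fil^\cdot}\circ C_0^{-1}(G))=p\,\mu(G)$, on the semistability of $(E,\theta)_0$, and on the behaviour of the inverse Cartier transform on subobjects recorded in the appendix. Granting that, the concluding length count is purely formal and, by design, invokes only Lemma \ref{JH filtration}.
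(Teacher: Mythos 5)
Your proof is correct and follows essentially the same route as the paper: establish that each quotient $\bar G_i/\bar G_{i-1}$ is Higgs semistable of degree zero, then force stability by the length-invariance of Jordan--H\"older filtrations from Lemma \ref{JH filtration}(ii). The paper phrases the final step as a contradiction (a non-stable quotient would refine to a strictly longer JH filtration) while you count directly, but this is the same length argument.
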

\begin{proof}
Note first that $Gr_{Fil^{\cdot}}\circ C_0^{-1}(G_i,\theta_i)$ is
Higgs semistable of degree zero. So the grading
$Gr_{Fil^{\cdot}}\circ C_0^{-1}(G_i,\theta_i)/Gr_{Fil^{\cdot}}\circ
C_0^{-1}(G_{i-1},\theta_{i-1})$ for each $i$ is Higgs semistable of
degree zero. It is to show that each grading is in fact Higgs
stable. For $1\leq i\leq r$ let
$$\pi_i: Gr_{Fil^{\cdot}}\circ C_0^{-1}(G_i,\theta_i)\twoheadrightarrow
Gr_{Fil^{\cdot}}\circ C_0^{-1}(G_i,\theta_i)/Gr_{Fil^{\cdot}}\circ
C_0^{-1}(G_{i-1},\theta_{i-1})$$ be the natural projection. If this
grading was not Higgs stable, then by Lemma \ref{JH filtration} (i),
there is a nontrivial JH filtration of this grading. Then the
preimage of this JH filtration in $Gr_{Fil^{\cdot}}\circ
C_0^{-1}(G_i,\theta_i)$ via $\pi_i$ is a nontrivial refinement of
the inclusion $Gr_{Fil^{\cdot}}\circ
C_0^{-1}(G_{i-1},\theta_{i-1})\subset Gr_{Fil^{\cdot}}\circ
C_0^{-1}(G_i,\theta_i)$ whose gradings are by construction Higgs
stable of degree zero. Therefore we will obtain a new JH filtration
of $(E,\theta)_0$ with strictly greater length, which contradicts
Lemma \ref{JH filtration} (ii).
\end{proof}
Then Proposition \ref{the operator preserves the irreducibles} is shown as follows.
\begin{proof}
One can refine the inclusion $(G,\theta)\subset (E,\theta)_0$ into a
JH filtration $$0\subset (G,\theta)=(G_1,\theta_1)\subset
\cdots\subset (E,\theta)_0.$$ Then the previous lemma
shows that $$0\subset
Gr_{Fil^{\cdot}}\circ C_0^{-1}(G,\theta)\subset \cdots \subset
(E,\theta)_0$$ is again a JH filtration. In particular,
$Gr_{Fil^{\cdot}}\circ C_0^{-1}(G,\theta)$ is Higgs stable of degree
zero.
\end{proof}
A composition series for $\V\otimes k$ is a filtration of subrepresentations whose gradings are irreducible.
The length of a composition series is the number of the irreducibles in its grading. It follows from Schur's lemma
that two composition series have the same length. A natural question is to compare
this length on the representation side with that on this Higgs side. The next result generalizes Theorem \ref{irreducibility implies stability}.
\begin{proposition}\label{existence of JH filtration with periodic components}
The length of a composition series of $\V\otimes k$ is less than or equal to the length of a JH filtration of $(E,\theta)_0$. Assume \ref{nonisomorphic components in the grading}. Then they are equal.
\end{proposition}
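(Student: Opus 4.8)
The plan is to derive both assertions from Theorem~\ref{char p periodic case}, the new ingredient being the finiteness of the set of degree-zero Higgs subbundles of $(E,\theta)_0$ supplied by Assumption~\ref{nonisomorphic components in the grading}.

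For the inequality (which needs no assumption): let $\ell$ be the length of a composition series of $\V\otimes k$. The image of the fundamental group in $\GL(\V\otimes k)$ is a finite group, so there is a finite field $\F_{p^r}$ which is a splitting field for it; then every composition factor of $\V\otimes\F_{p^r}$ is absolutely irreducible and remains irreducible over $k$, so $\V\otimes\F_{p^r}$ admits a composition series $0=\W_0\subset\W_1\subset\dots\subset\W_\ell=\V\otimes\F_{p^r}$ by $\F_{p^r}$-subrepresentations whose base change to $k$ is a composition series of $\V\otimes k$. By Theorem~\ref{char p periodic case} each $\W_i$ corresponds to a periodic Higgs subbundle $(G_i,\theta_i)\subset(E,\theta)_0$ of period dividing $r$, and since that correspondence is assembled from the functors $\mathbf D^t,\mathbf E^t$, the inverse Cartier transform $C_0^{-1}$ and $Gr_{Fil^{\cdot}}$, it preserves inclusions; thus $0=(G_0,\theta_0)\subset\dots\subset(G_\ell,\theta_\ell)=(E,\theta)_0$. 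By Proposition~\ref{perodic implies Higgs semistable of slope zero} each $(G_i,\theta_i)$, hence each graded piece, is Higgs semistable of slope zero. Refining this filtration to a Jordan--H\"older filtration via Lemma~\ref{JH filtration}~(i) produces a JH filtration of length $\ge\ell$, so by Lemma~\ref{JH filtration}~(ii) the length of any JH filtration of $(E,\theta)_0$ is at least $\ell$.

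Now assume~\ref{nonisomorphic components in the grading}. Working in the abelian category of Higgs bundles over $X_0$ that are semistable of slope zero (in which $(E,\theta)_0$ has finite length by \cite{SXZ}), an object whose Jordan--H\"older factors are pairwise non-isomorphic has only finitely many subobjects: this is the standard induction on the length, using that the socle is a direct sum of distinct simple objects, that homomorphisms between objects with disjoint sets of composition factors vanish, and hence that a subobject is determined by its intersection with the socle together with its image in the quotient by the socle. Let $\mathcal S$ be the resulting finite set of degree-zero Higgs subbundles of $(E,\theta)_0$ and put $T:=Gr_{Fil^{\cdot}}\circ C_0^{-1}$. Then $T$ preserves rank and definition field, preserves inclusions, multiplies slopes by $p$ (Lemma~3.2 of \cite{SXZ}) and hence sends degree-zero Higgs subbundles to degree-zero ones, and by Lemma~\ref{the operator preserves JH filtration} carries JH filtrations to JH filtrations. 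So $T$ restricts to a self-map of the finite set $\mathcal S$; there is $b_0\ge0$ with $T^{b_0}(\mathcal S)=T^{b_0+1}(\mathcal S)$, on which $T$ acts bijectively, so every element of $T^{b_0}(\mathcal S)$ is a periodic Higgs subbundle.

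To conclude, fix any JH filtration $0=(G_0,\theta_0)\subset\dots\subset(G_r,\theta_r)=(E,\theta)_0$, $r$ being the JH length. Iterating Lemma~\ref{the operator preserves JH filtration}, $H_\bullet:=T^{b_0}(G_\bullet)$ is a JH filtration of $(E,\theta)_0$ of length $r$, and every term $H_i=T^{b_0}(G_i)$ lies in $T^{b_0}(\mathcal S)$, hence is periodic; let $N$ be the least common multiple of their periods. By Theorem~\ref{char p periodic case} each $H_i$ corresponds to an $\F_{p^N}$-subrepresentation $\W_i\subset\V\otimes\F_{p^N}$, and by order-preservation and bijectivity of the correspondence $0=\W_0\subsetneq\W_1\subsetneq\dots\subsetneq\W_r=\V\otimes\F_{p^N}$ is a strictly increasing chain, so the length of $\V\otimes\F_{p^N}$, a fortiori the length of a composition series of $\V\otimes k$, is at least $r$. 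Together with the first part this forces equality. The step I expect to be the crux is verifying that $T$ is genuinely a well-defined self-map of $\mathcal S$ -- that $Gr_{Fil^{\cdot}}\circ C_0^{-1}$ of a degree-zero Higgs subbundle is again a saturated Higgs subbundle and that $G_\bullet\mapsto T(G_\bullet)$ is legitimate at the level of filtrations -- which rests on the clarification of $C_0^{-1}$ deferred to the appendix together with Lemma~\ref{the operator preserves JH filtration}.
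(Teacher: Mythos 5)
Your argument is correct, and for the second assertion it takes a genuinely different route from the paper. For the inequality, both proofs transport a composition series through Theorem \ref{char p periodic case} to a chain of periodic (hence degree-zero, semistable) Higgs subbundles and then refine; your extra care in first descending the composition series of $\V\otimes k$ to a finite splitting field $\F_{p^r}$ fills in a step the paper leaves implicit. For the equality, the paper produces a JH filtration with periodic constituents by a top-down induction along the filtration: it iterates $T=Gr_{Fil^\cdot}\circ C_0^{-1}$, applies pigeonhole to the isomorphism classes of the graded pieces (which Lemma \ref{the operator preserves JH filtration} and Lemma \ref{JH filtration}(ii) confine to the finite set $\{gr_1,\dots,gr_r\}$), uses Assumption \ref{nonisomorphic components in the grading} to force the relevant maps into the stable quotients to vanish and hence to obtain $T^{n_i}(G'_i)=T^{m_i}(G'_i)$, taking at each lower stage an iteration count that is a multiple of the periods already found. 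You instead observe that Assumption \ref{nonisomorphic components in the grading} makes $(E,\theta)_0$ multiplicity-free in the abelian category of slope-zero semistable Higgs bundles, so that its set of degree-zero subobjects is finite (a standard distributivity-type argument, which you sketch correctly via socles and the vanishing of homomorphisms between objects with disjoint composition factors); then $T$ is a self-map of a finite set, its eventual image consists entirely of periodic subbundles, and a single uniform power $T^{b_0}$ applied to any JH filtration does the job. Your route is cleaner and yields slightly more (a uniform exponent $b_0$, and periodicity of every degree-zero Higgs subbundle in the eventual image), at the cost of importing the finiteness lemma for multiplicity-free objects, which the paper does not use. The well-definedness caveat you flag at the end --- that $T$ of a degree-zero Higgs subbundle is again a genuine (saturated, locally free) Higgs subbundle --- is equally present in the paper's own proof and is not a defect specific to your argument.
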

\begin{proof}
By Theorem \ref{char p periodic case}, a composition series on $\V\otimes k$ gives rise to
a filtration on $(E,\theta)_0$ whose constituents are periodic Higgs subbundles, which are
Higgs semistable of degree zero by Proposition \ref{perodic implies Higgs semistable of slope zero}. Thus the first statement is clear. To get the second statement, it suffices to produce a
JH filtration on $(E,\theta)_0$ consisting of periodic Higgs subbundles. We start with an arbitrary JH filtration $$0=(G'_0,\theta'_0)\subset
(G'_1,\theta'_1)\subset \cdots\subset
(G'_r,\theta'_r)=(E,\theta)_0.$$ For each $l\in \N$ and $1\leq i\leq
r$ we write the grading
$$(Gr_{Fil^{\cdot}}\circ
C_0^{-1})^l(G'_i,\theta'_i)/(Gr_{Fil^{\cdot}}\circ
C_0^{-1})^l(G'_{i-1},\theta'_{i-1})$$ by $gr_i^l$ for short. It
follows from Lemma \ref{JH filtration} (ii) and Lemma \ref{the
operator preserves JH filtration} that $gr_i^l$ has its isomorphism
class in the set $\{gr_1,\cdots,gr_{r}\}$. So for $i=r-1$, there
exist nonnegative integers $r\geq n_{r-1}>m_{r-1}\geq 0$ such that
$gr_{r-1}^{n_{r-1}}\cong gr_{r-1}^{m_{r-1}}$. Thus, because of the assumption, the composite
$$
(Gr_{Fil^{\cdot}}\circ
C_0^{-1})^{n_{r-1}}(G'_{r-1},\theta'_{r-1})\subset
E_0\twoheadrightarrow E_0/(Gr_{Fil^{\cdot}}\circ
C_0^{-1})^{m_{r-1}}(G'_{r-1},\theta'_{r-1})=gr_{r-1}^{m_{r-1}}
$$
is zero. Hence the above inclusion factorizes through
$$(Gr_{Fil^{\cdot}}\circ
C_0^{-1})^{n_{r-1}}(G'_{r-1},\theta'_{r-1})\subset
(Gr_{Fil^{\cdot}}\circ
C_0^{-1})^{m_{r-1}}(G'_{r-1},\theta'_{r-1}).$$ As they have the same rank and both are of degree zero, the inclusion is actually an equality
$$(Gr_{Fil^{\cdot}}\circ
C_0^{-1})^{n_{r-1}}(G'_{r-1},\theta'_{r-1})= (Gr_{Fil^{\cdot}}\circ
C_0^{-1})^{m_{r-1}}(G'_{r-1},\theta'_{r-1}).
$$
Then we replace the starting JH filtration with the one after the $m_{r-1}$-iterated $Gr_{Fil^{\cdot}}\circ C_0^{-1}$-action. Put
$$
(G_{r-1},\theta_{r-1})=(Gr_{Fil^{\cdot}}\circ C_0^{-1})^{m_{r-1}}(G'_{r-1},\theta'_{r-1}),
$$
which is periodic with period $\leq n_{r-1}-m_{r-1}\leq r$, and denote the obtained JH filtration by
$$0=(G'_0,\theta'_0)\subset (G'_1,\theta'_1)\subset
\cdots\subset(G_{r-1},\theta_{r-1})\subset (E,\theta)_0.$$ Note that
$(G'_i,\theta'_i)$ may differ from the original one. Next, we shall
apply the same argument to the filtration $$0=(G'_0,\theta'_0)\subset
(G'_1,\theta_1)\subset \cdots\subset(G'_{r-1},\theta'_{r-1}).$$ But we shall take the number of iterations of the operator $Gr_{Fil^{\cdot}}\circ
C_0^{-1}$ to be a multiple of $(n_{r-1}-m_{r-1})$. It yields
then nonnegative integers $r-1\geq n_{r-2}>m_{r-2}\geq 0$ such that
$$
(Gr_{Fil^{\cdot}}\circ
C_0^{-1})^{(n_{r-1}-m_{r-1})n_{r-2}}(G'_{r-2},\theta'_{r-2})=
(Gr_{Fil^{\cdot}}\circ
C_0^{-1})^{(n_{r-1}-m_{r-1})m_{r-2}}(G'_{r-2},\theta'_{r-2})
$$
holds. Put $$(G_{r-2},\theta_{r-2})=(Gr_{Fil^{\cdot}}\circ
C_0^{-1})^{(n_{r-1}-m_{r-1})m_{r-2}}(G'_{r-2},\theta'_{r-2}).$$ Then
continue the argument. In the end, we will obtain a JH filtration
whose constituents $(G_i,\theta_i), 1\leq i\leq r-1$ are all periodic.
\end{proof}
The following corollary is immediate after the above discussions.
\begin{corollary}\label{direct corollary in char p}
Assume \ref{nonisomorphic components in the grading}. If $(E,\theta)_0$ decomposes
$$
(E,\theta)_0=\bigoplus_{i=1}^{r}(G_i,\theta_i)
$$
into a direct sum of stable factors, then
$$
\V\otimes k=\bigoplus_{i=1}^{r}\V_i
$$
with $\V_i$ is irreducible and $\dim_k\V_i=\rank_{\sO_{X_0}}G_i$.
\end{corollary}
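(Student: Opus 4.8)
The plan is to read the corollary off from Theorem~\ref{char p periodic case} together with the internal argument in the proof of Proposition~\ref{partial converse} and the length count of Proposition~\ref{existence of JH filtration with periodic components}. Write $(E,\theta)_0=\bigoplus_{i=1}^r(G_i,\theta_i)$ with each $(G_i,\theta_i)$ Higgs stable; this is in particular polystable, and under Assumption~\ref{nonisomorphic components in the grading} the factors are pairwise non-isomorphic, so (noting each $(G_i,\theta_i)$ has degree zero) $0\subset(G_1,\theta_1)\subset(G_1,\theta_1)\oplus(G_2,\theta_2)\subset\dots\subset(E,\theta)_0$ is a Jordan--H\"older filtration of length $r$. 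First I would recall from the proof of Proposition~\ref{partial converse} that $Gr_{Fil^{\cdot}}\circ C_0^{-1}$ sends each stable degree-zero factor $(G_i,\theta_i)$ to a stable degree-zero subbundle which, by the non-isomorphism hypothesis and the fact that the $(G_i,\theta_i)$ are direct summands, must again be some $(G_j,\theta_j)$, and that the induced map $\pi\colon\{1,\dots,r\}\to\{1,\dots,r\}$ is a bijection; in particular every $(G_i,\theta_i)$ is periodic. Let $r_0$ be the order of $\pi$, so that all the periods of the $(G_i,\theta_i)$ divide $r_0$.

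Next I would feed the decomposition into Theorem~\ref{char p periodic case} with $r=r_0$, tracing through its proof. There a periodic Higgs subbundle of period dividing $r_0$ is presented as a period-one subbundle of $(E,\theta)^{\oplus r_0}$ and then matched, via the rank-preserving functors of Faltings, with an $\F_p$-subrepresentation of $\V^{\oplus r_0}=\V\otimes_{\F_p}\F_{p^{r_0}}$. Applied to $(G_i,\theta_i)$ this yields an $\F_{p^{r_0}}$-subrepresentation $\W_i\subseteq\V\otimes_{\F_p}\F_{p^{r_0}}$ with $\dim_{\F_{p^{r_0}}}\W_i=\rank_{\sO_{X_0}}G_i$; and, since in the $j$-th copy of $(E,\theta)_0$ the piece attached to $(G_i,\theta_i)$ is $(Gr_{Fil^{\cdot}}\circ C_0^{-1})^j(G_i,\theta_i)=(G_{\pi^j(i)},\theta_{\pi^j(i)})$ and $\pi^j$ is a bijection, the internal direct sum $(E,\theta)_0=\bigoplus_i(G_i,\theta_i)$ is carried to an internal direct sum
$$
\V\otimes_{\F_p}\F_{p^{r_0}}=\bigoplus_{i=1}^r\W_i ,
$$
the dimension identity $\sum_i\dim_{\F_{p^{r_0}}}\W_i=\sum_i\rank G_i=\rank_{\sO_{X_0}}E=\dim_{\F_p}\V$ forcing the sum to be direct.

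Finally, tensoring this decomposition along $\F_{p^{r_0}}\hookrightarrow k$ gives $\V\otimes k=\bigoplus_{i=1}^r\big(\W_i\otimes_{\F_{p^{r_0}}}k\big)$, a direct sum of $r$ nonzero $k$-subrepresentations. By Proposition~\ref{partial converse} the representation $\V\otimes k$ is semisimple, and by Proposition~\ref{existence of JH filtration with periodic components} the length of a composition series of $\V\otimes k$ equals the length $r$ of the Jordan--H\"older filtration above; hence each summand $\W_i\otimes_{\F_{p^{r_0}}}k$ must already be irreducible (a summand with two or more constituents would push the number of constituents past $r$). Setting $\V_i:=\W_i\otimes_{\F_{p^{r_0}}}k$ we obtain $\V\otimes k=\bigoplus_{i=1}^r\V_i$ with $\V_i$ irreducible and $\dim_k\V_i=\dim_{\F_{p^{r_0}}}\W_i=\rank_{\sO_{X_0}}G_i$, as desired.

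The substance of the argument is the bookkeeping through Theorem~\ref{char p periodic case} — verifying $\dim_{\F_{p^{r_0}}}\W_i=\rank G_i$ and that an internal direct sum is carried to an internal direct sum — while the conceptually delicate point, that $Gr_{Fil^{\cdot}}\circ C_0^{-1}$ does not commute with direct sums (so that one cannot naively apply the correspondence summand by summand), is precisely what has already been handled in the proof of Proposition~\ref{partial converse} via Proposition~\ref{the operator preserves the irreducibles}. I therefore do not expect a genuine obstacle: the corollary is, as asserted, immediate from the preceding results.
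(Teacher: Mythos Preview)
Your argument is correct and is precisely the approach the paper has in mind: the corollary is stated there without proof as ``immediate after the above discussions,'' and the discussions in question are exactly Proposition~\ref{partial converse} (whose proof shows each $(G_i,\theta_i)$ is periodic and the operator permutes the factors), Theorem~\ref{char p periodic case} (giving the $\F_{p^{r_0}}$-decomposition with the correct ranks), and Proposition~\ref{existence of JH filtration with periodic components} (forcing irreducibility by the length count). You have simply made explicit what the paper leaves implicit.
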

Our next result replaces the assumption \ref{nonisomorphic components in the grading} with a geometric one.
\begin{proposition}\label{geometric case in char p}
Let $M\in \mathcal{MF}^{\nabla}$ be an object arising from geometry. Suppose that $X_0(k)$ contains an ordinary point (over which the Newton polygon coincides with the Hodge polygon). If
$$
(E,\theta)_0=\bigoplus_{i=1}^r(G_i,\theta_i)^{\oplus
m_i}
$$ decomposes into a direct sum such that each $(G_i,\theta_i)$ is Higgs stable and
$$
(G_i,\theta_i)\ncong(G_j,\theta_j), \ i\neq j,
$$
then accordingly
$$
\V\otimes k=\bigoplus_{i=1}^{r}\V_i
$$
with $\dim_k\V_i=\rank_{\sO_{X_0}}G_i$.
\end{proposition}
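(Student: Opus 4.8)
The plan is to show that each stable factor $(G_i,\theta_i)$ is periodic under the operator $T:=Gr_{Fil^{\cdot}}\circ C_0^{-1}$ and then read off the decomposition of $\V\otimes k$ from Theorem \ref{char p periodic case}, the novelty over Corollary \ref{direct corollary in char p} being the presence of multiplicities $m_i>1$. First I would record, exactly as in the proof of Proposition \ref{partial converse}, that $T$ preserves Higgs stability and slope zero by Proposition \ref{the operator preserves the irreducibles}: thus $T(G_i,\theta_i)$ is a Higgs stable subbundle of $(E,\theta)_0$ of degree zero, and, by stability together with the hypothesis that the $(G_i,\theta_i)$ are pairwise non-isomorphic, it is isomorphic to a unique $(G_{\tau(i)},\theta_{\tau(i)})$, with $\rank G_{\tau(i)}=\rank G_i$ since $C_0^{-1}$ and $Gr_{Fil^{\cdot}}$ preserve ranks. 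This defines a self-map $\tau$ of $\{1,\dots,r\}$.

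Next I would prove that $\tau$ is a permutation with $m_{\tau(i)}=m_i$. Since $C_0^{-1}$ is additive and $(E,\theta)_0$ has period one (it corresponds to $M$ itself, by Corollary \ref{fixed char p case}), the de Rham bundle $(M,\nabla)_0=C_0^{-1}(E,\theta)_0$ decomposes as $\bigoplus_i N_i$ with $N_i:=C_0^{-1}((G_i,\theta_i)^{\oplus m_i})$, an isotypic de Rham subbundle of rank $m_i\rank G_i$. The obstruction is precisely the failure of $T$ to commute with direct sums noted after Proposition \ref{the operator preserves the irreducibles}: the Hodge filtration on $(M,\nabla)_0$ need not be compatible with the decomposition $\bigoplus_i N_i$, so a priori $T$ could mix the various $N_i$. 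Here the geometric hypotheses enter. At an ordinary point $x\in X_0(k)$, where the Newton polygon equals the Hodge polygon, the fibre $(M,Fil^{\cdot},\nabla,\Phi)_x$ is an ordinary Fontaine--Laffaille module and therefore carries a canonical $\Phi$-compatible splitting of the Hodge filtration; propagating this splitting to a neighbourhood of $x$ one checks that there $Fil^{\cdot}$ is compatible with $\bigoplus_i N_i$, so that $T$ carries the isotypic block $(G_i,\theta_i)^{\oplus m_i}$ onto a full isotypic sub-block of type $G_{\tau(i)}$ of multiplicity $m_i$. Applying $T$ to all of $(E,\theta)_0$ and comparing isotypic components, which are canonical, forces $\sum_{j\in\tau^{-1}(i)}m_j=m_i$ for every $i$; since each $m_j\geq 1$ this already yields surjectivity, hence bijectivity, of $\tau$ together with $m_{\tau(i)}=m_i$. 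The same compatibility shows that $T$ is injective on the Higgs subbundles occurring here, which is the point left open in general in Remark \ref{remark on quasiperiodic}.

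Finally I would conclude by a periodicity argument in the style of the proof of Theorem \ref{irreducibility implies stability}. Within each $\tau$-orbit of indices, $T$ permutes the finitely many Higgs subbundles of $(E,\theta)_0$ of the relevant rank, degree zero and definition field, so each $(G_i,\theta_i)$ is quasi-periodic; by the injectivity of $T$ just obtained it is in fact periodic, of some period $r_i$. Taking $r$ to be a common multiple of the $r_i$ and applying Theorem \ref{char p periodic case} to each $(G_i,\theta_i)$ together with the further periodic subbundles of type $G_i$ filling out its isotypic block, one obtains, for each $\tau$-orbit, an $\F_{p^r}$-subrepresentation of $\V\otimes\F_{p^r}$ of $\F_{p^r}$-rank $\sum_{i\ \mathrm{in\ orbit}}m_i\rank G_i$. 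Because $C_0^{-1}$ is rank-preserving one has $\sum_i m_i\rank G_i=\rank E=\dim\V$, so these subrepresentations exhaust $\V\otimes\F_{p^r}$; descending along $\F_{p^r}/\F_p$ by means of the $\sigma$-conjugation part of Theorem \ref{char p periodic case}, which on the Higgs side is $T$, gives the asserted decomposition $\V\otimes k=\bigoplus_{i=1}^{r}\V_i$ with the stated dimension count.

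The main obstacle is the second step: controlling the non-functorial operator $Gr_{Fil^{\cdot}}\circ C_0^{-1}$ on the isotypic blocks when multiplicities are present. Everything rests on extracting, from the existence of a single ordinary point, enough information about the position of the Hodge filtration of $(M,\nabla)_0$ relative to $\bigoplus_i C_0^{-1}((G_i,\theta_i)^{\oplus m_i})$ to force $T$ to respect that decomposition, equivalently to be injective on the subbundles in play; once this is in hand the remainder is a rank computation and an appeal to the correspondences already established in Theorem \ref{char p periodic case} and Corollary \ref{fixed char p case}.
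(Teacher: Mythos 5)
Your overall strategy coincides with the paper's: use the ordinarity hypothesis to force $Gr_{Fil^{\cdot}}\circ C_0^{-1}$ to respect the decomposition of $(E,\theta)_0$, deduce that it permutes the isotypic blocks $(G_i,\theta_i)^{\oplus m_i}$, conclude periodicity of each block, and invoke Theorem \ref{char p periodic case}. You have also correctly isolated the crux, namely the compatibility of the Hodge filtration on $(M,\nabla)_0$ with the flat decomposition $\bigoplus_i N_i$, where $N_i=C_0^{-1}((G_i,\theta_i)^{\oplus m_i})$. But at exactly that point your argument stops being a proof: ``propagating this splitting to a neighbourhood of $x$ one checks that there $Fil^{\cdot}$ is compatible with $\bigoplus_i N_i$'' is an assertion, not an argument, and you concede as much in your closing paragraph. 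Nothing in what you wrote explains why the canonical splitting attached to ordinarity has anything to do with the particular decomposition $\bigoplus_i N_i$.

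The paper closes this gap in two moves. First, the complement to the Hodge filtration $F_{hod}$ furnished by ordinarity is the conjugate filtration $F_{con}$, and $F_{con}$ is determined by the flat bundle $(M,\nabla)_0$ alone; hence, by Cartier--Katz descent (Proposition 2.11 of \cite{SXZ}), it automatically decomposes as $\bigoplus_{i'}(M_{i'}\cap F_{con})$ along any flat decomposition $\bigoplus_{i'}M_{i'}$ --- no choice or ``propagation'' is needed. Second, ordinarity at a single $k$-point is an open condition, so at the generic point of $X_0$ the composite $\pi\colon F_{con}\hookrightarrow M_0\twoheadrightarrow M_0/F_{hod}$ is an isomorphism; combined with the fact (coming from Proposition \ref{the operator preserves the irreducibles}) that the images $M_{i'}/M_{i'}\cap F_{hod}$ pairwise intersect trivially or coincide, this transports the decomposition of $F_{con}$ to $M_0/F_{hod}=\bigoplus_{i'}M_{i'}/(M_{i'}\cap F_{hod})$ and hence yields $F_{hod}=\bigoplus_{i'}(M_{i'}\cap F_{hod})$. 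That is precisely the content your second step needs; without it (or an equivalent substitute) the proof is incomplete. A smaller point: your final bookkeeping groups the resulting subrepresentations by $\tau$-orbits, which produces a coarser decomposition than the one asserted; once each block $(G_i,\theta_i)^{\oplus m_i}$ is known to be periodic, Theorem \ref{char p periodic case} should be applied to each block separately, giving one $\V_i$ per index $i$.
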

\begin{proof}
In the argument we shall use $F_{hod}$ to mean the Hodge filtration
$Fil^\cdot$ and $F_{con}$ the conjugate filtration on $M_0$. We
shall prove only the weight one case, because the higher weight case
is entirely the same. Forgetting the multiplicities $\{m_i\}$, we
rewrite the decomposition of $(E,\theta)_0$ into
$$(E,\theta)_0=\oplus_{i'}(G_{i'},\theta_{i'}).$$ As $C_0^{-1}$
respects direct sum, it yields the decomposition
$$
(M,\nabla)_0=\oplus_{i'}(M_{i'},\nabla_{i'})
$$
with $(M_{i'},\nabla_{i'})=C_0^{-1}(G_{i'},\theta_{i'})$. If follows
from the Cartier-Katz descent (see e.g. Proposition 2.11 \cite{SXZ})
that $F_{con}$ on $M_0$ decomposes accordingly. That is,
$$
F_{con}=\oplus_{i'}M_i'\cap F_{con}.
$$
Now Proposition \ref{the operator preserves the irreducibles}
implies that the images of $(G_{i'},\theta_{i'})$ and
$(G_{j'},\theta_{j'})$ under $Gr_{Fil^\cdot}\circ C_0^{-1}$
intersect trivially or coincide. Thus we can take the following
argument over the \emph{generic point} of $X_0$. So we are
considering vector spaces and their linear maps over the function
field of $X_0$. For simplicity, we shall not change the notations
for this base change. The ordinary assumption is equivalent to that
over the generic point the two filtrations $F_{hod}$ and $F_{con}$
on $M_0$ are complementary. We claim that $F_{hod}$ on $M_0$
decomposes accordingly. For that, we consider the composite $\pi$ of
natural morphisms in the following diagram:
$$
\xymatrix{
 & F_{con} \ar[d]_{\hookrightarrow} \ar[dr]^{\pi}        \\
 F_{hod}\ar[r]_{\hookrightarrow}& M_0 \ar[r]_{\twoheadrightarrow}  & M_0/F_{hod}          }
$$
It is an isomorphism by the ordinary assumption. So the
decomposition of $F_{con}$ induces via $\pi$ the following
decomposition:
$$
M_0/F_{hod}=\oplus_{i'}\pi(M_{i'}\cap F_{con}).
$$
Because $\pi$ maps $M_{i'}\cap F_{con}$ into $M_{i'}/M_{i'}\cap
F_{hod}$ and any two of $M_{i'}/M_{i'}\cap F_{hod}$s either
intersect trivially or coincide, it follows that any two of them
intersect trivially and
$$
\pi(M_{i'}\cap F_{con})=M_{i'}/M_{i'}\cap
F_{hod}.
$$
It follows that
$$
M_0/F_{hod}=\oplus_{i'}M_{i'}/M_{i'}\cap F_{hod},
$$
which implies that $F_{hod}=\oplus_{i'}M_{i'}\cap F_{hod}$ as
claimed. Therefore, the following equality holds:
$$
\oplus_{i'}(G_{i'},\theta_{i'})=Gr_{Fil^\cdot}\circ
C_0^{-1}[\oplus_{i'}(G_{i'},\theta_{i'})]=
\oplus_{i'}Gr_{Fil^\cdot}\circ C_0^{-1}(G_{i'},\theta_{i'}).
$$
In particular, we have
$$
\oplus_i(G_i,\theta_i)^{\oplus m_i}=\oplus_iGr_{Fil^\cdot}\circ
C_0^{-1}[(G_i,\theta_i)^{\oplus m_i}].
$$
As for any given $i$ there is a unique $j$ such that
$$Gr_{Fil^\cdot}\circ C_0^{-1}(G_i,\theta_i)\cong (G_j,\theta_j)$$
holds, the previous equality implies that $m_i=m_j$ and
$$
Gr_{Fil^\cdot}\circ C_0^{-1}[(G_i,\theta_i)^{\oplus
m_i}]=(G_j,\theta_j)^{\oplus m_j}.
$$
So the operator $Gr_{Fil^\cdot}\circ C_0^{-1}$ permutes the factors
$\{(G_i,\theta_i)^{\oplus m_i}\}$ and therefore each
$(G_i,\theta_i)^{\oplus m_i}$ is periodic. The result follows from
Theorem \ref{char p periodic case}.
\end{proof}
The following lemma allows us to lift many results in char $p$ to mixed characteristic.
\begin{lemma}\label{unique lifting lemma}
Let $(G,\theta)$ be a Higgs subbundle of $(E,\theta)_m$ satisfying
the following two conditions:
\begin{itemize}
    \item [(i)] the quotient $(E,\theta)_m/(G,\theta)$ is a locally free $\sO_{X_m}$-module,
  \item [(ii)] the set $\Hom_{\sO_{X_0}}((G,\theta)_0,(E,\theta)_0/(G,\theta)_0)$ of morphisms of Higgs bundles is trivial.
\end{itemize}
Then if $(G,\theta)'\subset (E,\theta)_m$ is a Higgs subbundle
with the same rank as $G$ and its modulo $p$ reduction $(G,\theta)'_0$ is equal to $(G,\theta)_0$, then one concludes that $$(G,\theta)'=(G,\theta).$$
\end{lemma}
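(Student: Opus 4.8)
The plan is to argue by induction on $m$; the case $m=0$ is the hypothesis itself. Write $(Q,\theta):=(E,\theta)_m/(G,\theta)$, which is a Higgs bundle over $X_m$ by condition (i), and consider the composite of morphisms of Higgs bundles
$$
\psi\colon (G,\theta)'\hookrightarrow (E,\theta)_m\twoheadrightarrow (Q,\theta).
$$
Since condition (i) continues to hold after reduction modulo $p^m$, and condition (ii), the equality of ranks and the equality of the mod-$p$ reductions are literally unchanged, the inductive hypothesis applied to $(G,\theta)_{m-1}$ and $(G,\theta)'_{m-1}$ inside $(E,\theta)_{m-1}$ yields $(G,\theta)'_{m-1}=(G,\theta)_{m-1}$. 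Hence $\psi$ vanishes after reduction modulo $p^m$, so its image lies in $p^m Q$.

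Next I would pin down this target. Because $Q$ is locally free over $\sO_{X_m}=\sO_X/p^{m+1}$, multiplication by $p^m$ gives a $\theta$-equivariant isomorphism $(Q,\theta)_0\cong p^m Q$; and since the quotient $(E,\theta)_m/(G,\theta)$ is locally free, hence flat over $\sO_{X_m}$, reduction modulo $p$ keeps the sequence $0\to (G,\theta)\to (E,\theta)_m\to (Q,\theta)\to 0$ exact, so $(Q,\theta)_0=(E,\theta)_0/(G,\theta)_0$ as Higgs bundles. Dividing $\psi$ by $p^m$ and using that the target is killed by $p$ therefore produces a morphism of Higgs bundles
$$
\bar\psi\colon (G,\theta)'_0=(G,\theta)_0\longrightarrow (E,\theta)_0/(G,\theta)_0,
$$
where the left-hand identification is the remaining hypothesis. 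By condition (ii), $\bar\psi=0$, whence $\psi=0$, i.e. $(G,\theta)'\subseteq (G,\theta)$ as subsheaves of $(E,\theta)_m$.

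Finally I would promote this inclusion of equal-rank subsheaves to an equality. Put $N:=(G,\theta)/(G,\theta)'$. Reducing $0\to (G,\theta)'\to (G,\theta)\to N\to 0$ modulo $p$ and using that $(G,\theta)_0\hookrightarrow (E,\theta)_0$ (again flatness of the quotient) together with $(G,\theta)'_0=(G,\theta)_0$, one sees that $(G,\theta)'_0\to (G,\theta)_0$ is surjective, so $N_0=0$, i.e. $N=pN$; as $p^{m+1}$ annihilates $\sO_{X_m}$, iterating forces $N=0$, and thus $(G,\theta)'=(G,\theta)$. The computations are routine; the step I expect to need the most care is the identification of $p^m Q$ with $(E,\theta)_0/(G,\theta)_0$ and, above all, the verification that $\bar\psi$ is a morphism of Higgs bundles rather than merely of $\sO_{X_0}$-modules, since it is precisely as such that condition (ii) forces it to vanish — this is also where the local freeness hypothesis (i) is indispensable.
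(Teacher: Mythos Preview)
Your argument is correct and follows essentially the same route as the paper's: form the composite into the quotient, use the vanishing of Higgs morphisms $(G,\theta)_0\to (E,\theta)_0/(G,\theta)_0$ repeatedly to push the image down into higher powers of $p$, and finish with a Nakayama-type step. The only cosmetic difference is that the paper works out the case $m=1$ by applying condition~(ii) twice and then says ``an easy induction on $m$ shows the lemma,'' whereas you package the repetition as a single inductive step landing directly in $p^mQ$; your Nakayama argument for $N=0$ is also a bit more explicit than the paper's one-line ``same rank and same modulo $p$ reduction.''
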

\begin{proof}
The assumption (i) implies that $G$ has no $p$-torsion as a local basis element  and hence the modulo $p$ reduction map $G\otimes\F_p \to E_0$ is injective. As $G'$ has the same rank as $G$ and the same modulo $p$ reduction, the modulo $p$
reduction map $G'\otimes \F_p \to E_0$ is also injective. For $m=0$, there
is nothing to prove. So we begin with the modulo $p^2$ reductions of
$(G,\theta)'$ and $(G,\theta)$. Denote the composite
$(G,\theta)'_1\subset (E,\theta)_1\twoheadrightarrow
(E,\theta)_1/(G,\theta)_1$ by $\tau$. By the condition (ii), the
modulo $p$ reduction $\tau_0: (G,\theta)'_0=(G,\theta)_0\to
(E,\theta)_0/(G,\theta)_0$ is zero. So $$\tau((G,\theta)'_1)\subset
p[(E,\theta)_1/(G,\theta)_1].$$ Then we continue to consider the
composite $$(G,\theta)'_1\stackrel{\tau}{\to}
p[(E,\theta)_1/(G,\theta)_1]\stackrel{1/p}{\cong}
(E,\theta)_0/(G,\theta)_0.
$$
It descends clearly to a morphism $\tau/p: (G,\theta)_0 \to
(E,\theta)_0/(G,\theta)_0$ and hence is zero for the same reason.
This means that $\tau$ is zero and therefore an inclusion
$$(G,\theta)'_1\subset (G,\theta)_1.$$ Since they have the same rank
and the same modulo $p$ reduction, they are equal. An easy induction
on $m$ shows the lemma.
\end{proof}
The following result uses the full strength of our theory, which therefore can be regarded as our best approximation to our original question in the introduction.
\begin{theorem}\label{lifting statement of prop in geo case}
Let $M\in \mathcal{MF}^{\nabla}$ be a non $p$-torsion object with $\V$ and $(E,\theta)$ as before. Assume one of the following two situations:
\begin{itemize}
    \item [(i)] $(E,\theta)=\bigoplus_{i=1}^{r}(G_i,\theta_i)$ and the reductions mod $p$ of $\{(G_i,\theta_i)\}_{1\leq i\leq r}$ are Higgs stable and nonisomorphic to each other.
    \item [(ii)] $M$ arises from geometry and $(E,\theta)=\bigoplus_{i=1}^{r}(G_i,\theta_i)^{\oplus m_i}$ and the reductions mod $p$ of $\{(G_i,\theta_i)\}_{1\leq i\leq r}$ are Higgs stable and nonisomorphic to each other.
\end{itemize}
Then it holds accordingly that for a natural number $r$,
$$
\V\otimes_{\Z_p} \Z_{p^r}=\bigoplus_{i=1}^{r}\V_i
$$
with $\rank_{\Z_{p^r}}\V_i=\rank_{\sO_X}(G_i)$ in Case (i) and
$$\V\otimes_{\Z_p} \Z_{p^r}=\oplus_i \V_i$$ with
$\rank_{\Z_{p^r}}\V_i=m_i\rank_{\sO_X}(G_i)$ in Case (ii).
\end{theorem}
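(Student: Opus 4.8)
The plan is to reduce the statement, via Theorem \ref{theorem on one to one correspondence for periodic points}, to the assertion that the given Higgs summands are \emph{periodic over $W$} in the sense of Definition \ref{definition of periodic Higgs subbundle at formal level}. Write $\mathcal{F}_i=(G_i,\theta_i)$ in Case (i) and $\mathcal{F}_i=(G_i,\theta_i)^{\oplus m_i}$ in Case (ii), so that $(E,\theta)=\bigoplus_i\mathcal{F}_i$ over $W$ and, by hypothesis, the reductions $(\mathcal{F}_i)_0$ are Higgs semistable of degree zero and admit no non-zero Higgs homomorphisms among themselves, nor into their complements in $(E,\theta)_0$. Once I know each $\mathcal{F}_i$ is periodic over $W$ with period dividing a common $r$, Theorem \ref{theorem on one to one correspondence for periodic points} attaches to $\mathcal{F}_i$ a $\Z_{p^r}$-subrepresentation $\V_i\subset\V\otimes_{\Z_p}\Z_{p^r}$; since that correspondence is additive (by the same reasoning as in the proof of Proposition \ref{semisimple implies polystability}) and since $(E,\theta)$ itself is periodic of period one and corresponds to $\V\otimes\Z_{p^r}$, one gets $\V\otimes_{\Z_p}\Z_{p^r}=\bigoplus_i\V_i$, with the rank equalities $\rank_{\Z_{p^r}}\V_i=\rank_{\sO_X}\mathcal{F}_i$ being exactly the ones carried by that correspondence (equivalently, those of Corollary \ref{direct corollary in char p} / Proposition \ref{geometric case in char p} on the char $p$ level). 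This gives $\rank_{\Z_{p^r}}\V_i=\rank_{\sO_X}G_i$ in Case (i) and $\rank_{\Z_{p^r}}\V_i=m_i\rank_{\sO_X}G_i$ in Case (ii).

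So the whole content is the periodicity of $\mathcal{F}_i$ over $W$, which I would prove by induction up the Witt tower. At the bottom level, Proposition \ref{the operator preserves the irreducibles} together with the argument of Proposition \ref{partial converse} (in Case (i)), respectively the argument inside the proof of Proposition \ref{geometric case in char p} (in Case (ii), where the ordinary hypothesis enters), shows that $Gr_{Fil^\cdot}\circ C_0^{-1}$ \emph{permutes} the set $\{(\mathcal{F}_i)_0\}_i$ by a permutation $\sigma$, with $Gr_{Fil^\cdot}\circ C_0^{-1}((\mathcal{F}_i)_0)=(\mathcal{F}_{\sigma(i)})_0$ as an honest equality of sub-Higgs-bundles of $(E,\theta)_0$. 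In particular $(\mathcal{F}_i)_0$ is periodic with period dividing $r:=\ord(\sigma)$; moreover $C_0^{-1}((\mathcal{F}_i)_0)$ is filtered free, since its grading $(\mathcal{F}_{\sigma(i)})_0$ is a direct summand of $(E,\theta)_0$ and hence locally free, and $(M,\nabla)_0=\bigoplus_iC_0^{-1}((\mathcal{F}_i)_0)$ because $C_0^{-1}$ is additive and sends $(E,\theta)_0$ to $(M,\nabla)_0$.

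For the inductive step I would carry along the statement: at level $m$, the lift $(\mathcal{F}_i)_m\subset(E,\theta)_m$ has $C_m^{-1}((\mathcal{F}_i)_m)$ filtered free, $(M,\nabla)_m=\bigoplus_iC_m^{-1}((\mathcal{F}_i)_m)$, and $Gr_{Fil^\cdot}\circ C_m^{-1}((\mathcal{F}_i)_m)=(\mathcal{F}_{\sigma(i)})_m$. Granting this at level $m$, each $(\mathcal{F}_i)_m$ is periodic, so $\mathcal{M}_i:=C_{m+1}^{-1}((\mathcal{F}_i)_{m+1})$ is defined by Proposition \ref{extension of inverse cartier transform to the larger set} and its inductive generalization, with $(\mathcal{M}_i)_m=C_m^{-1}((\mathcal{F}_i)_m)$; additivity of $C_{m+1}^{-1}$ together with the strong $p$-divisibility of $\Phi$ gives $\bigoplus_i\mathcal{M}_i=(M,\nabla)_{m+1}$, the sum being direct by a rank count (the $\mathcal{M}_i$ are free of rank $\rank\mathcal{F}_i$ summing to $\rank M$, and a surjective endomorphism of a finitely generated module is injective). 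From the definition of the induced filtration one checks that $Gr^k_{Fil^\cdot}\mathcal{M}_i$ injects into $Gr^k_{Fil^\cdot}(M)_{m+1}=E^{k,n-k}_{m+1}$, so $H_i:=Gr_{Fil^\cdot}\mathcal{M}_i$ is a Higgs subsheaf of $(E,\theta)_{m+1}$; its image mod $p$ is $(\mathcal{F}_{\sigma(i)})_0$ (here one uses that $(\mathcal{M}_i)_m$ is filtered free, so $Gr_{Fil^\cdot}$ commutes with that reduction), and the $(\mathcal{F}_{\sigma(i)})_0$ sum directly to $(E,\theta)_0$, whence $\sum_iH_i=(E,\theta)_{m+1}$ by Nakayama. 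Running the same "surjective endomorphism is injective" count on $\bigoplus_iH_i\twoheadrightarrow(E,\theta)_{m+1}$ forces $\bigoplus_iH_i\xrightarrow{\sim}(E,\theta)_{m+1}$, each $H_i$ locally free of rank $\rank\mathcal{F}_{\sigma(i)}$, and each $\mathcal{M}_i$ filtered free. Finally $H_i$ is a Higgs subbundle of $(E,\theta)_{m+1}$ with locally free quotient, of the expected rank, reducing mod $p$ to $(\mathcal{F}_{\sigma(i)})_0$; since $\mathcal{F}_{\sigma(i)}$ is a direct $W$-summand of $(E,\theta)$ with $\Hom_{\sO_{X_0}}((\mathcal{F}_{\sigma(i)})_0,(E,\theta)_0/(\mathcal{F}_{\sigma(i)})_0)=0$, Lemma \ref{unique lifting lemma} forces $H_i=(\mathcal{F}_{\sigma(i)})_{m+1}$, i.e. $Gr_{Fil^\cdot}\circ C_{m+1}^{-1}((\mathcal{F}_i)_{m+1})=(\mathcal{F}_{\sigma(i)})_{m+1}$. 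This closes the induction; letting $m\to\infty$ shows each $\mathcal{F}_i$ is periodic over $W$ with period dividing $r$, and the first paragraph finishes the proof.

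The step I expect to be the main obstacle is the inductive lift — more precisely, propagating the compatibility of the inverse-Cartier decomposition with the Hodge filtration from level $m$ to level $m+1$ without having, a priori, any control of $Gr_{Fil^\cdot}\circ C_{m+1}^{-1}$ on the individual summands. What makes it go through is the combination of Lemma \ref{unique lifting lemma}, whose hypothesis (ii) is exactly the vanishing of Higgs homomorphisms supplied by the stability and pairwise non-isomorphy of the $(\mathcal{F}_i)_0$, with the elementary "surjective endomorphism of a finitely generated module is injective" bookkeeping that upgrades mod $p$ directness to directness and filtered-freeness at every level. Note that in Case (ii) the ordinary hypothesis is used \emph{only} to establish the char $p$ base case via Proposition \ref{geometric case in char p}; the inductive lift itself is purely formal once that base case is in hand. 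One must also verify, at each stage, the routine compatibilities of $Gr_{Fil^\cdot}$ with reduction modulo powers of $p$ on the already filtered-free lower reductions.
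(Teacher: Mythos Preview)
Your proposal is correct and follows essentially the same route as the paper: establish periodicity of the summands in characteristic $p$ via Corollary \ref{direct corollary in char p} (Case (i)) or Proposition \ref{geometric case in char p} (Case (ii)), then lift periodicity inductively up the Witt tower using Proposition \ref{extension of inverse cartier transform to the larger set} together with Lemma \ref{unique lifting lemma}, and finally invoke Theorem \ref{theorem on one to one correspondence for periodic points}.

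The difference is one of packaging rather than substance. The paper works one summand at a time and applies Lemma \ref{unique lifting lemma} only after iterating $Gr_{Fil^\cdot}\circ C_m^{-1}$ a full period $r_i$ times, obtaining directly $(Gr_{Fil^\cdot}\circ C_m^{-1})^{r_i}(G_i,\theta_i)_m=(G_i,\theta_i)_m$; it says nothing about the intermediate iterates and does not track the decomposition of $(M,\nabla)_m$. You instead prove the stronger single-step identity $Gr_{Fil^\cdot}\circ C_m^{-1}((\mathcal{F}_i)_m)=(\mathcal{F}_{\sigma(i)})_m$ and carry along the full direct-sum decomposition of $(M,\nabla)_m$, together with filtered freeness of each $C_m^{-1}((\mathcal{F}_i)_m)$. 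This costs you the Nakayama/``surjective endomorphism is injective'' bookkeeping, but it buys explicit verification of the compatibility of $Gr_{Fil^\cdot}$ with reduction mod $p^{m+1}$, which the paper absorbs into the phrase ``by construction''. Either route works; yours is more self-contained, the paper's is shorter.
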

\begin{proof}
We show only Case (i) since the proof for Case (ii) is entirely similar. Each $(G_i,\theta_i)_0$ is a periodic Higgs subbundle by the assumption. Let $r_i$ be its period. Thus we can apply $C_1^{-1}$ to $(G_i,\theta_i)_1$ by
Proposition \ref{extension of inverse cartier transform to the
larger set}. Then by construction, $(Gr_{Fil^{\cdot}}\circ C_1^{-1})^{r_i}(G_i,\theta_i)_1$ has the same rank as $(G_i)_1$ and its reduction mod $p$ is equal to
$(Gr_{Fil^{\cdot}}\circ C_0^{-1})^{r_i}(G_i,\theta_i)_0$, which is equal to $(G_i,\theta_i)_0 $ by periodicity. So Lemma \ref{unique lifting lemma} applies, and we get the equality
$$
(Gr_{Fil^{\cdot}}\circ C_1^{-1})^{r_i}(G_i,\theta_i)_1=(G_i,\theta_i)_1,
$$
which means that $(G_i,\theta_i)_1\subset (E,\theta)_1$ is periodic with period $r_i$. We continue the argument, and show inductively that $(G_i,\theta_i)_m\subset (E,\theta)_m$ is periodic of period $r_i$ for
any $m\geq 0$. Theorem \ref{theorem on one to one
correspondence for periodic points} (and its proof) implies the corresponding direct decomposition of $\V$ with the claimed properties after tensoring with $\Z_{p^r}$ with $r$ is the least common multiple of all $r_i$s.
\end{proof}
We conclude this section by pointing out a connection with the notion of strongly semistable vector bundles, which was introduced in \cite{LS} and
has played a central role in the work of Deninger and Werner \cite{DW}. This connection becomes more evident in our recent work \cite{LSZ1}.
\begin{proposition}\label{periodic Higgs subbundle with zero Higgs field in char p} Let $(G^{i,n-i},0)\subset (E,\theta)_0$ be a
periodic Higgs subbundle of period $r$. If $(Gr_{Fil^{\cdot}}\circ
C_0^{-1})^j(G^{i,n-i})$ is of pure type for each $1\leq j\leq r$,
then $G^{i,n-i}$ is \'{e}tale trivializable and particularly
strongly semistable.
\end{proposition}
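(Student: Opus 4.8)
The statement to prove is Proposition \ref{periodic Higgs subbundle with zero Higgs field in char p}: a periodic Higgs subbundle of the form $(G^{i,n-i},0)$ of period $r$, with the additional property that each $(Gr_{Fil^{\cdot}}\circ C_0^{-1})^j(G^{i,n-i})$ is of pure type, is \'{e}tale trivializable, hence strongly semistable. The key observation is that the Higgs field being zero forces $C_0^{-1}$ to degenerate to a purely linear-algebraic operation. Indeed, when $\theta = 0$ on $G$, the de Rham bundle $C_0^{-1}(G^{i,n-i},0)$ is, by Remark \ref{remark on quasiperiodic} (the identity $[C_0^{-1}(G)] = [F_X^*(G)]$ refined to the level of bundles when $\theta=0$; here one uses that for a Higgs bundle with zero field and concentrated in a single degree, the inverse Cartier transform is literally the Frobenius pullback with its canonical connection), simply $F_{X_0}^*(G^{i,n-i})$ equipped with the canonical Frobenius-descent connection. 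Taking $Gr_{Fil^{\cdot}}$ of this then recovers the underlying bundle of $F_{X_0}^*(G^{i,n-i})$ placed again in a single Hodge degree — this is exactly the pure-type hypothesis. So I would first record the identification
$$
(Gr_{Fil^{\cdot}}\circ C_0^{-1})(G^{i,n-i}) \;\cong\; F_{X_0}^*(G^{i,n-i})
$$
as bundles, whenever the left side is of pure type, and then iterate.

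\medskip

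Iterating $r$ times and using periodicity, one obtains an isomorphism of vector bundles
$$
(F_{X_0}^r)^*(G^{i,n-i}) \;\cong\; G^{i,n-i},
$$
where $F_{X_0}^r$ denotes the $r$-fold (absolute) Frobenius on $X_0$. This is precisely the statement that $G^{i,n-i}$ is a Frobenius-periodic bundle in the sense of Lange–Stuhler \cite{LS}: a bundle $V$ on a variety over $\F_p$ with $(F^r)^*V \cong V$ for some $r$. I would cite the result of Lange and Stuhler that such bundles are exactly the \'{e}tale trivializable ones, i.e.\ those that become trivial after pullback along a finite \'{e}tale cover — equivalently, those arising from a representation of $\pi_1^{\text{\'et}}(X_0)$ with finite image factoring through $\GL_{\rank G}(\F_{p^r})$. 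One mild technical point: the isomorphism produced by iterating $Gr_{Fil^\cdot}\circ C_0^{-1}$ is a priori only defined over $X_0$ after extending scalars along $F_k$; but since $k$ is algebraically closed, $X_0' \cong X_0$ canonically and the distinction disappears, so the Lange–Stuhler criterion applies directly over $k$.

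\medskip

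Finally, étale trivializability implies strong semistability: if $\pi: Y_0 \to X_0$ is finite étale with $\pi^* G^{i,n-i}$ trivial, then for every power $F^m$ of Frobenius on $X_0$, $(F^m)^* G^{i,n-i}$ pulls back to a trivial bundle on the corresponding étale cover, hence is semistable of slope zero (with respect to $\mu_{Z_0}$, using that étale pullback preserves the relevant intersection numbers up to the degree of $\pi$); since this holds for all $m$, $G^{i,n-i}$ is strongly semistable. This last implication is the standard fact from \cite{LS} that I would simply quote.

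\medskip

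\textbf{Main obstacle.} The crux is the first step: justifying rigorously that, under the pure-type hypothesis, $Gr_{Fil^\cdot}\circ C_0^{-1}$ applied to a single-degree Higgs bundle with zero field returns the Frobenius pullback \emph{as a bundle} (not merely in $K_0$). This requires unwinding the local construction of $C_0^{-1}$ recalled in Section \ref{section on periodic Higgs subbundles in char p} (the span $\Span[\frac{\Phi}{p^i} F^*\tilde g^{i,n-i}]$) in the degenerate case where $\theta=0$ and $G = G^{i,n-i}$ sits in one Hodge degree $i$: then there is a single block, the lift $\tilde g^{i,n-i}$ can be chosen in $Fil^i$ only, and $\frac{\Phi}{p^i}F^*\tilde g^{i,n-i}$ generates exactly $F^*G^{i,n-i}$ with its canonical connection. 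The pure-type assumption on each iterate is precisely what guarantees that this single-block structure \emph{persists} under iteration, so that one never leaves the regime where the computation is valid; without it, $Gr_{Fil^\cdot}\circ C_0^{-1}(G^{i,n-i})$ could acquire a nontrivial Hodge filtration and the clean Frobenius-pullback description would fail. Everything after that first identification is a citation of Lange–Stuhler plus bookkeeping about $X_0 \cong X_0'$ over the algebraically closed field $k$.
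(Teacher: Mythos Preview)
Your proposal is correct and follows essentially the same route as the paper: identify $C_0^{-1}(G^{i,n-i},0)$ with $F_{X_0}^*G^{i,n-i}$ (the paper cites Remark 2.2 of \cite{OV} and Proposition 2.9 of \cite{SXZ} for this, rather than unwinding the local span), use the pure-type hypothesis to pass through $Gr_{Fil^\cdot}$ at each stage, iterate to obtain $F_{X_0}^{*r}G^{i,n-i}\cong G^{i,n-i}$, and invoke Satz 1.4 of \cite{LS}. Your additional remarks on $X_0\cong X_0'$ and on why \'etale trivializability implies strong semistability are correct elaborations but not part of the paper's terse argument.
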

\begin{proof}
We shall show an isomorphism
$$
F_{X_0}^{*r}G^{i,n-i}\cong G^{i,n-i},
$$
which implies that $G^{i,n-i}$ is \'{e}tale trivializable by Satz
1.4 \cite{LS}. To show that, we consider first
$Gr_{Fil^{\cdot}}\circ C_0^{-1}(G^{i,n-i},0)$. As it is of pure type
by assumption, it is isomorphic to $C_0^{-1}(G^{i,n-i},0)$, which is
isomorphic to $F_{X_0}^*G^{i,n-i}$ by Remark 2.2 \cite{OV} (see also
Proposition 2.9 \cite{SXZ}). Note that we can continue the argument
and show inductively for $1\leq j\leq r$ an isomorphism
$$
(Gr_{Fil^{\cdot}}\circ C_0^{-1})^j(G^{i,n-i},0)\cong F_{X_0}^{*j}G^{i,n-i}.
$$
Then the claimed isomorphism follows from the periodicity and the $j=r$ case.
\end{proof}
\begin{remark}\label{remark on necessariness of pure type}
The assumption on purity of the Hodge type of
$$(Gr_{Fil^{\cdot}}\circ C_0^{-1})^j(G^{i,n-i}), \ 1\leq j\leq r$$ made
for the strong semistability of $G^{i,n-i}$ is necessary. The
example in Proposition 6.6 (ii) \cite{SZZ} shows that a certain
power of the operator $Gr_{Fil^{\cdot}}\circ C_0^{-1}$ can turn a
Higgs subbundle with zero Higgs field into a Higgs subbundle with maximal Higgs field, which is not semistable but Higgs semistable.
\end{remark}

\section{A global inverse Cartier transform over $W_2$}\label{globalization of lifting of inverse cartier
transform} Our aim of this section is to globalize the construction
of the local inverse Cartier transform over $W_2$ in \S\ref{periodic
Higgs subbundles in mixed char}. In particular, the proof of
Theorem \ref{C_1^{-1}} is completed in this section. The main
technique underlying the construction is an extensive use of the
Taylor formula (see \S\ref{MF category}). Thus, without loss of
generality, we can assume $X$ to be a curve. This assumption amounts
to simplify a multi-index into a usual index in the arguments. Our
strategy is as follows: firstly we show that the local inverse
Cartier transform over $W_2$ does not depend on the choice of
Frobenius liftings. Secondly, we modify the arguments to show that
the local constructions glue into a global one. Finally, we adapt
the technique of \S\ref{section on periodic Higgs subbundles in char
p} to show that the local inverse Cartier transform over $W_2$
extends over to those Higgs subbundles whose reductions are periodic
in char $p$.\\

Assume $X$ affine with a Frobenius lifting $F_{\hat X}$, and $M\in
\mathcal{MF}^\nabla$ satisfies $p^2M=0$ and $pM\neq 0$. The
following simple lemma reduces certain issues over $W_2$ to char
$p$.
\begin{lemma}\label{simple lineare algebra lemma}
Let $M'\subset M$ be a free $\sO_{X}$-submodule, and $M'_0\subset
M_0$ the image of its modulo $p$ reduction in $M_0$. Then the
isomorphism $\frac{1}{p}: pM\cong M_0$ restricts to an isomorphism
$pM'\cong M'_0$.
\end{lemma}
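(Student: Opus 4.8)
The plan is to unwind the definition of the isomorphism $\frac{1}{p}\colon pM\cong M_0$ and simply track what it does to the submodule $pM'\subset pM$. This is a piece of linear-algebra bookkeeping over $\sO_{X_1}=\sO_X/p^2\sO_X$, and I do not anticipate any genuine obstacle; if anything, the subtle point is only to read "restricts to" correctly.

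First I would recall how $\frac{1}{p}$ arises. Since $p^2M=0$, multiplication by $p$ on $M$ annihilates $pM$, hence factors through a homomorphism $\bar p\colon M_0=M/pM\to pM$; because $M$ is free over $\sO_X/p^2\sO_X$ (this is exactly where the hypotheses $pM\neq 0$ and $p^2M=0$ are used), $\bar p$ is an isomorphism of $\sO_{X_0}$-modules, and $\frac{1}{p}$ denotes its inverse. Concretely, for $m\in M$ with class $\bar m\in M_0$ one has $\frac{1}{p}(pm)=\bar m$.

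Now for the submodule $M'\subset M$ I would check the two inclusions directly. Every element of $pM'$ has the form $pm'$ with $m'\in M'$, and $\frac{1}{p}(pm')=\bar m'$ lies in $M'_0$, the image of $M'$ in $M_0$; hence $\frac{1}{p}(pM')\subseteq M'_0$. Conversely, an arbitrary element of $M'_0$ equals $\bar m'$ for some $m'\in M'$, and $\bar m'=\frac{1}{p}(pm')$ with $pm'\in pM'$; hence $M'_0\subseteq \frac{1}{p}(pM')$, so $\frac{1}{p}(pM')=M'_0$. Finally $\frac{1}{p}$ is injective on all of $pM$, hence on the $\sO_{X_0}$-submodule $pM'$; being a bijective $\sO_{X_0}$-linear map onto $M'_0$, its restriction is the desired isomorphism $pM'\cong M'_0$. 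The only point worth flagging is that "$\frac{1}{p}$ restricts to an isomorphism $pM'\cong M'_0$" must be parsed as "$\frac{1}{p}$ carries the submodule $pM'$ bijectively onto $M'_0$"; the freeness of $M'$ is in fact not needed in the argument, only that of $M$.
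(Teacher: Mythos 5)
Your proof is correct, and it establishes exactly what the lemma asserts. The paper's own argument is the coordinate version of yours: it fixes $\sO_{X_1}$-bases $e$ of $M$ and $e'$ of $M'$, writes the inclusion $M'\hookrightarrow M$ as a matrix $A$, and observes that both $M'_0$ and the image of $pM'$ under $\frac{1}{p}$ are generated by $A_0\cdot e_0$; your version replaces this by the element-wise identity $\frac{1}{p}(pm')=\bar m'$ together with the inherited injectivity of $\frac{1}{p}$ on the submodule $pM'$. The two arguments have the same content, but yours is marginally cleaner in that it makes visible that the freeness of $M'$ is never used (only the freeness, or really the $p$-torsion-freeness, of $M$ enters, to know that $\frac{1}{p}:pM\to M_0$ is a well-defined isomorphism), whereas the paper's proof formally invokes a basis of $M'$ that the argument does not actually need.
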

\begin{proof}
Fix an $\sO_{X_1}$-basis $e$ (resp. $e'$) of $M$ (resp. $M'$). Under
these bases, the inclusion $M'\hookrightarrow M$ is given by a
matrix $A=(a_{ij})$ with $a_{ij}\in \sO_{X_1}$. Then the image
$M'_0$ under the map
$$A_0:=A \mod p: M' \mod p \to M\mod p=M_0$$ is generated by
$A_0\cdot e_0$. On the other hand, $pM'\subset pM$ is generated by
$(pA)\cdot e$. So its image under the isomorphism $\frac{1}{p}$ is
generated also by $A_0\cdot e_0$.
\end{proof}
\begin{proposition}\label{independence of choice of Frobenius lifting}
Let $(G,\theta)$ be as given in Proposition and Definition
\ref{local cartier}. Then $C_{1}^{-1}(G,\theta)$ is independent of
the choice of Frobenius liftings over $\hat X$.
\end{proposition}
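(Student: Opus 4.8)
The plan is to fix two Frobenius liftings $F_{\hat X}$ and $F_{\hat X}'$ on $\hat X$, to run the recipe of Proposition and Definition \ref{local cartier} with each of them, and to show the two resulting submodules of $M$ agree. Since $X$ is a curve, fix an \'{e}tale coordinate $t$, put $\partial=\frac{d}{dt}$, and set $z=F_{\hat X}'(t)-F_{\hat X}(t)$; as both liftings reduce modulo $p$ to the $p$-th power map, $z\in p\sO_{\hat X}$. The first observation is that the special liftings $\{\tilde g^{i,n-i}\}$ produced by Lemma \ref{existence of liftings} do not involve any Frobenius lifting: condition (i) there is filtration-theoretic and condition (ii) refers only to the canonical transform $C_0^{-1}$. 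Since Proposition and Definition \ref{local cartier} guarantees that the span is independent of the choice of such liftings, we may use one and the same family $\{\tilde g^{i,n-i}\}$ to present both $C_1^{-1}(G,\theta)_{F_{\hat X}}$ and $C_1^{-1}(G,\theta)_{F_{\hat X}'}$. It then suffices to show that each defining generator $\frac{\Phi_{F_{\hat X}'}}{p^{i}}\big((F_{\hat X}')^{*}\tilde g^{i,n-i}\big)$ of the second lies in the first, the reverse inclusion following by interchanging the two liftings (which merely replaces $z$ by $-z$).

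I would next feed the two liftings into the Taylor formula of \S\ref{MF category}, taken with $R'=R=\sO_{\hat X}$ and $\iota=\id$: it presents $\Phi_{F_{\hat X}'}$ as $\Phi_{F_{\hat X}}\circ\alpha$ with $\alpha(e\otimes 1)=\sum_{k\ge 0}\nabla_{\partial}^{k}(e)\otimes\frac{z^{k}}{k!}$. Since $p^{2}M=0$ gives $p^{2}(F_{\hat X}^{*}M)=0$, any coefficient $\frac{z^{k}}{k!}$ lying in $p^{2}\sO_{\hat X}$ annihilates $F_{\hat X}^{*}M$; and because $z\in p\sO_{\hat X}$, a short estimate with Legendre's formula (this is where $p$ odd is used) gives $\frac{z^{k}}{k!}\in p^{2}\sO_{\hat X}$ for every $k\ge 2$. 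Hence only the terms $k=0,1$ remain, so that $\Phi_{F_{\hat X}'}((F_{\hat X}')^{*}e)=\Phi_{F_{\hat X}}(e\otimes 1)+z\cdot\Phi_{F_{\hat X}}(\nabla_{\partial}e\otimes 1)$ for all $e\in M$. Taking $e=\tilde g^{i,n-i}$, using $\nabla_{\partial}\tilde g^{i,n-i}\in Fil^{i-1}M$ by Griffiths transversality together with the strong $p$-divisibility, and normalising by the operators $\frac{\Phi_{F_{\hat X}}}{p^{j}}$ on $Fil^{j}$ used in Proposition and Definition \ref{local cartier}, one obtains
\[
\frac{\Phi_{F_{\hat X}'}}{p^{i}}\big((F_{\hat X}')^{*}\tilde g^{i,n-i}\big)=\frac{\Phi_{F_{\hat X}}}{p^{i}}\big(F_{\hat X}^{*}\tilde g^{i,n-i}\big)+\frac{z}{p}\cdot\frac{\Phi_{F_{\hat X}}}{p^{i-1}}\big(F_{\hat X}^{*}\nabla_{\partial}\tilde g^{i,n-i}\big),
\]
with $\frac{z}{p}$ a section of $\sO_{\hat X}$ (for $i=0$ the correction term is to be read as $z\cdot\Phi_{F_{\hat X}}(\nabla_{\partial}\tilde g^{0,n}\otimes 1)\in pM$).

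The first summand is, by construction, one of the generators of $C_1^{-1}(G,\theta)_{F_{\hat X}}$. For the correction term: when $i\ge 1$, $\frac{\Phi_{F_{\hat X}}}{p^{i-1}}\big(F_{\hat X}^{*}\nabla_{\partial}\tilde g^{i,n-i}\big)$ is exactly the element already placed inside $C_1^{-1}(G,\theta)_{F_{\hat X}}$ in the proof of the $\nabla$-invariance claim within Proposition and Definition \ref{local cartier} (whose argument, via reduction modulo $p$, invokes condition (ii) of Lemma \ref{existence of liftings}, the $\nabla$-stability of $C_0^{-1}(G,\theta)_0$, and the period-one hypothesis through Proposition \ref{basic property of C_0^{-1}}), and multiplying by the scalar $\frac{z}{p}$ keeps it in the $\sO_{X_1}$-submodule $C_1^{-1}(G,\theta)_{F_{\hat X}}$. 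When $i=0$ one argues directly: the correction term $z\cdot\Phi_{F_{\hat X}}(\nabla_{\partial}\tilde g^{0,n}\otimes 1)$ lies in $pM$, and under the isomorphism $pM\cong M_0$ of Lemma \ref{simple lineare algebra lemma} its image is $\big(\tfrac{z}{p}\bmod p\big)\cdot\big(\Phi_{F_{\hat X}}(\nabla_{\partial}\tilde g^{0,n}\otimes 1)\bmod p\big)$, which lies in $C_0^{-1}(G,\theta)_0=(C_1^{-1}(G,\theta)_{F_{\hat X}})_0$ because $\tilde g^{0,n}\bmod p\in C_0^{-1}(G,\theta)_0$, this subbundle is $\nabla$-stable, and it is stable under the relative Frobenius by its $\tilde\Phi$-stability (Proposition \ref{basic property of C_0^{-1}}). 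Hence every generator of $C_1^{-1}(G,\theta)_{F_{\hat X}'}$ lies in $C_1^{-1}(G,\theta)_{F_{\hat X}}$, and by the symmetry noted above the two submodules coincide.

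The step I expect to be the main obstacle is the honest bookkeeping of the ``division by $p^{i}$'' for the $p$-power-torsion module $M$: one must make precise that, after normalisation, the $k=1$ Taylor term is exactly the $\nabla$-derivative expression already shown to lie in $C_1^{-1}(G,\theta)_{F_{\hat X}}$ in Proposition and Definition \ref{local cartier}, and — on the input side — that the higher Taylor coefficients $\frac{z^{k}}{k!}$ $(k\ge 2)$ genuinely fall into $p^{2}\sO_{\hat X}$, which is where $p$ being odd enters.
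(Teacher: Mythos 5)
Your overall strategy is the same as the paper's: use one family of special liftings $\{\tilde g^{i,n-i}\}$ from Lemma \ref{existence of liftings} for both Frobenius liftings, compare generators via the Taylor formula, and push the error terms into $pM$, where Lemma \ref{simple lineare algebra lemma}, the $\nabla$-invariance of $C_0^{-1}(G,\theta)_0$ and Proposition \ref{basic property of C_0^{-1}} finish the job. However, there is a genuine gap in your central computation. Writing $F=F_{\hat X}$, $F'=F'_{\hat X}$, your two-term identity
\[
\frac{\Phi_{F'}}{p^{i}}\bigl((F')^{*}\tilde g^{i,n-i}\bigr)=\frac{\Phi_{F}}{p^{i}}\bigl(F^{*}\tilde g^{i,n-i}\bigr)+\frac{z}{p}\cdot\frac{\Phi_{F}}{p^{i-1}}\bigl(F^{*}\nabla_{\partial}\tilde g^{i,n-i}\bigr)
\]
is obtained by ``normalising by $p^{i}$'' the identity $\Phi_{F'}((F')^{*}e)=\Phi_{F}(e\otimes 1)+z\,\Phi_{F}(\nabla_{\partial}e\otimes 1)$, which you correctly established in the $p^{2}$-torsion module $M$. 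That normalisation is not legitimate: the divided operators $\frac{\Phi}{p^{j}}$ are part of the strong $p$-divisibility structure (defined on a torsion-free lift), and the Taylor comparison of the \emph{divided} operators must be performed before reducing modulo $p^{2}$. On a lift, the $k$-th Taylor term of $\frac{\Phi_{F'}}{p^{i}}((F')^{*}\tilde g^{i,n-i})$ for $2\le k\le i$ equals $\frac{\Phi_{F}}{p^{i-k}}(\nabla^{k}_{\partial}\tilde g^{i,n-i}\otimes 1)\cdot\frac{(z/p)^{k}}{k!}$, using $\nabla^{k}_{\partial}\tilde g^{i,n-i}\in Fil^{i-k}M$; here $(z/p)^{k}/k!$ is integral but in general \emph{not} divisible by $p$, so these terms survive modulo $p^{2}$. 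Your formula silently drops them, and for any $i\ge 2$ it is false.

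These are precisely the terms the paper's proof collects into the sum $I$, and handling them needs an idea absent from your argument: one uses the $\theta$-invariance of $G$ to write $\nabla^{k}_{\partial}\tilde g^{i,n-i}=b\,\tilde g^{i-k,n-i+k}+\omega$ with $\omega\in Fil^{i-k+1}M$, so that $\frac{\Phi_{F}}{p^{i-k}}(b\,\tilde g^{i-k,n-i+k}\otimes 1)$ lies visibly in the span, while $\frac{\Phi_{F}}{p^{i-k}}(\omega\otimes 1)\in pM$ and is controlled modulo $p$ by exactly the reduction you apply to your correction term. Your treatment of the derivative-order-one term, of the $i=0$ case, and of the tail $k\ge i+1$ (the $p^{2}$-divisibility of $z^{k}/k!$, where $p$ odd enters) is sound; the proof only becomes complete once the intermediate orders $2,\dots,i$ are treated as above.
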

\begin{proof}
Take another Frobenius lifting $F'_{\hat X}$. For short, we write
$\phi=\Phi_{F_{\hat X}}$ and $\phi'=\Phi_{F'_{\hat X}}$. Because of
the symmetric roles of $\phi$ and $\phi'$, it suffices to show for
each $0\leq i \leq n$,
$$
\frac{\phi'}{p^{i}}(\tilde g^{i,n-i}\otimes
1)-\frac{\phi}{p^{i}}(\tilde g^{i,n-i}\otimes 1)\in
\Span[\frac{\phi}{p^{i}}(\tilde g^{i,n-i}\otimes 1),\ 0\leq i\leq
n].
$$
We show this by an explicit calculation via the Taylor formula.
Choose a local coordinate $t\in \sO_X$ of $X$. Then the Taylor formula says that
$$
(\phi'-\phi)(e\otimes
1)=\sum_{j=1}^{\infty}\phi(\nabla_{\partial}^j(e)\otimes 1)\otimes
z^j/j!,
$$
where $\partial=\frac{d}{dt}$, $e$ is an element in $M$, and
$z=F'_{\hat X}(t)-F_{\hat X}(t)\in \sO_{\hat X}$ which is divisible
by $p$. For an $e\in M$, the above formula then reads by modulo
$p^2$. So in this case the right hand side of the above formula is
just a finite sum. By the Griffiths transversality,
$$\nabla^{i-j}_{\partial}(\tilde g^{i,n-i})\in Fil^jM, \ 0\leq
j\leq i-1.$$ As $i\leq n\leq p-2$, the above formula for
$e=g^{i,n-i}\in M$ can be written into
$$
\frac{\phi'}{p^i}(\tilde g^{i,n-i}\otimes 1)-\frac{\phi}{p^i}(\tilde
g^{i,n-i}\otimes 1)=I+II,
$$
with
$$
I=\sum_{j=0}^{i-1}\frac{\phi}{p^{j}}(\nabla^{i-j}_{\partial}(\tilde
g^{i,n-i})\otimes 1)\otimes \frac{z^{i-j}}{p^{i-j}(i-j)!}
$$
and
$$
II=\sum_{j\geq i+1}\phi(\nabla_{\partial}^{j}(\tilde
g^{i,n-i})\otimes 1)\otimes \frac{z^{j}}{p^{i}j!}.
$$
We are going to show the terms $I$ and $II$ belong to
$\Span[\frac{\phi}{p^{i}}(\tilde g^{i,n-i}\otimes 1),\ 0\leq i\leq
n]$. Consider first the term $II$. Note that $\frac{z^{j}}{p^{i}j!}$
is divisible by $p$ for $j\geq i+1$. So $II\in pM$. By Lemma
\ref{simple lineare algebra lemma}, $II\in
p\Span[\frac{\phi}{p^{i}}(\tilde g^{i,n-i}\otimes 1),\ 0\leq i\leq
n]$ iff
$$
II/p\in \Span[\frac{\phi}{p^{i}}(\tilde g^{i,n-i}\otimes
1),\ 0\leq i\leq n]_0=C_{0}^{-1}(G,\theta)_0.
$$
So we consider the modulo $p$ reduction of
$\phi(\nabla_{\partial}^{j}(\tilde g^{i,n-i})\otimes 1)$. By the
condition (ii) of Lemma \ref{existence of liftings}, $\tilde
g_0^{i,n-i}\in C_0^{-1}(G,\theta)_0$. As it is $\nabla$-invariant,
it follows that for any $j$,
$$
\nabla_{\partial}^{j}(\tilde g_0^{i,n-i})\in C_0^{-1}(G,\theta)_0.
$$
By Proposition \ref{basic property of C_0^{-1}}, if follows further
that
$$
\phi(\nabla_{\partial}^{j}(\tilde g_0^{i,n-i})\otimes 1)\in
C_{0}^{-1}(G,\theta)_0.
$$
Therefore,
$$II\in p\Span[\frac{\phi}{p^{i}}(\tilde g^{i,n-i}\otimes
1),\ 0\leq i\leq n]\subset \Span[\frac{\phi}{p^{i}}(\tilde
g^{i,n-i}\otimes 1),\ 0\leq i\leq n].$$ Consider next the term $I$.
As $G\subset E$ is $\theta$-invariant, there exists a unique $b_j\in
\sO_{X_1}$ such that
$$
\nabla^{i-j}_{\partial}(\tilde g^{i,n-i})\mod
Fil^{j+1}M=b_jg^{j,n-j}\in G^{j,n-j}.
$$
As clearly $b_j\tilde g^{j,n-j}\mod Fil^{j+1}M=b_jg^{i,n-j}$, it
follows that $$\omega^{j+1,n-j-1}:=\nabla^{i-j}_{\partial}(\tilde
g^{i,n-i})-b_j\tilde g^{j,n-j}\in Fil^{j+1}M.$$ Note that
$\frac{\phi}{p^j}(\omega^{j+1,n-j-1}\otimes 1)\in pM$. Again by
Lemma \ref{simple lineare algebra lemma}, in order to show
$$\frac{\phi}{p^j}(\omega^{j+1,n-j-1}\otimes 1)\in
p\Span[\frac{\phi}{p^{i}}(\tilde g^{i,n-i}\otimes 1),\ 0\leq i\leq
n],$$ it suffices to show
$\frac{\phi}{p^{j+1}}(\omega^{j+1,n-j-1}_0\otimes 1)\in
C_{0}^{-1}(G,\theta)_0$. But this is rather clear, because
$$
\omega_0^{j+1,n-j-1}=\nabla^{i-j}_{\partial}(\tilde
g_0^{i,n-i})-b_{j,0}\tilde g_0^{j,n-j}
$$
belongs to $C_{0}^{-1}(G,\theta)_0$ and by Proposition \ref{basic
property of C_0^{-1}}
$$
\frac{\phi}{p^{j+1}}(F_{U_0}^*C_{0}^{-1}(G,\theta)_0)\subset
C_{0}^{-1}(G,\theta)_0.
$$
As clearly $$\frac{ \phi}{p^j}(b_j\tilde g^{j,n-j}\otimes 1)\in
\Span[\frac{\phi}{p^{i}}(\tilde g^{i,n-i}\otimes 1),\ 0\leq i\leq
n],$$ we have also shown $$I\in \Span[\frac{\phi}{p^{i}}(\tilde
g^{i,n-i}\otimes 1),\ 0\leq i\leq n].$$ This completes the proof.
\end{proof}
From now on, $X$ is assumed to be proper smooth over $W$. Let $\sU$
be a small open affine covering of $X$, together with a choice of
Frobenius lifting $F_{\hat U}$ over $\hat U$ for each $U\in \sU$.
Thus for a Higgs subbundle $(G,\theta)$ in the situation of
Theorem \ref{C_1^{-1}}, we have constructed a set of local de
Rham subbundles $\{C_1^{-1}(G,\theta)|_{U}\}_{U\in \sU}$ with local
properties listed ibid. In order to show $C_1^{-1}(G,\theta)$
exists, it suffices to show the following equality of subbundles in
$M|_{U_1\cap V_1}$ for any $U,V\in \sU$:
$$
[C_{1}^{-1}(G,\theta)|_{U_1}]|_{U_1\cap
V_1}=C_1^{-1}[(G,\theta)|_{U_1\cap V_1}].
$$
Its proof modifies the previous one. Take Frobenius liftings
$F_{\hat U}, F_{\hat V}, F_{\widehat{U\cap V}}$ on $\hat U,\hat
V,\widehat{U\cap U}$ respectively and write $$z=F_{\hat U}\circ
\iota(t)-\iota\circ F_{\widehat{U\cap V}}(t),$$ where
$\iota:\widehat{U\cap V}\hookrightarrow \hat U$ is the natural
inclusion. Then the difference $$ \iota_1^*[\frac{\Phi_{F_{\hat
U}}}{p^i}(\tilde g^{i,n-i}\otimes 1)]-\frac{\Phi_{F_{\widehat{U\cap
V}}}}{p^i}[\iota_1^*(\tilde g^{i,n-i}\otimes 1)]$$ is again
expressed by the Taylor formula. Thus the previous proof carries
over, and it shows that
$$
[C_{1}^{-1}(G,\theta)|_{U_1}]|_{U_1\cap V_1}\subset
C_1^{-1}[(G,\theta)|_{U_1\cap V_1}].
$$
In order to obtain the equality rather than an inclusion, we shall
examine the proof of Proposition \ref{independence of choice of
Frobenius lifting}. Consider first the above difference for $i=0$.
One sees from the proof that the difference belongs to
$p\Phi_{F_{\widehat{U\cap V}}}(\iota_1^*\tilde g^{0,n})$. So it
holds that
$$
\iota_1^*[\Phi_{F_{\hat U}}(\tilde g^{0,n}\otimes
1)]=\Phi_{F_{\widehat{U\cap V}}}[\iota_1^*(\tilde g^{0,n}\otimes
1)].
$$
For a general $1\leq i\leq n$, we shall use induction on $i$. Assume
the truth of the equality for $i-1$, namely,
$$
\Span[\iota_1^*[\frac{\Phi_{F_{\hat U}}}{p^{j}}(\tilde
g^{j,n-j}\otimes 1)],\ 0\leq j\leq i-1]
=\Span[\frac{\Phi_{F_{\widehat{U\cap V}}}}{p^{j}}[\iota_1^*(\tilde
g^{j,n-j}\otimes 1)],\ 0\leq j\leq i-1].
$$
As one sees from the proof that the difference
$$\iota_1^*[\frac{\Phi_{F_{\hat U}}}{p^i}(\tilde g^{i,n-i}\otimes
1)]-\frac{\Phi_{F_{\widehat{U\cap V}}}}{p^i}[\iota_1^*(\tilde
g^{i,n-i}\otimes 1)]$$ belongs to
$$p\frac{\Phi_{F_{\widehat{U\cap V}}}}{p^i}[\iota_1^*(\tilde
g^{i,n-i}\otimes 1)]+\Span[\frac{\Phi_{F_{\widehat{U\cap
V}}}}{p^{j}}[\iota_1^*(\tilde g^{j,n-j}\otimes 1)], 0\leq j\leq
i-1],$$ one obtains the equality also for $i$. So the local
subbundles $\{C_1^{-1}(G,\theta)|_{U}\}_{U\in \sU}$ glue into a
global subbundle $C_1^{-1}(G,\theta)$ of $(M,\nabla)$ as claimed.
Now we proceed to the proof of Proposition \ref{extension of inverse
cartier transform to the larger set}.
\begin{proof}
Let $(G,\theta)\subset (E,\theta)_1$ be a Higgs subbundle with the
equality in char $p$:
$$
(Gr_{Fil^{\cdot}}\circ C_0^{-1})^r(G,\theta)_0=(G,\theta)_0.
$$
As remarked in the proof of Theorem \ref{theorem on one to one
correspondence for periodic points}, Lemma \ref{lemma on elementary
number theory} and its consequent lemmas extend to $W_2$. So we have
the eigen-decomposition
$$
(E,\theta)^{\oplus r}_1=\bigoplus_{i=0}^{r-1}(E^i,\theta^i),
$$
and isomorphisms of Higgs bundles
$$
\beta_i: (E,\theta)_1\cong (E^i,\theta^i).
$$
By Lemma \ref{easy lemma for beta}, the Higgs subbundle
$$
\bigoplus_{i=0}^{r-1}\beta_i[(Gr_{Fil^\cdot}\circ
C_0^{-1})^i(G,\theta)_0]\subset
\bigoplus_{i=0}^{r-1}(E^i,\theta^i)_0
$$
is periodic of period one. So one might be able to reduce the
construction to Theorem \ref{C_1^{-1}}. But this does not quite
succeed. This is because the existence of a Higgs subbundle in
$\bigoplus_{i=0}^{r-1}(E^i,\theta^i)$, whose reduction modulo $p$ is
$\bigoplus_{i=0}^{r-1}\beta_i[(Gr_{Fil^\cdot}\circ
C_0^{-1})^i(G,\theta)_0]$, is not part of our assumption. Instead,
we shall modify our original local construction suitably so that the
previous arguments carries over. We can assume in the following
argument that $r=2$. This assumption does not affect much the proof
for a general $r$, but will simplify the notations greatly. \\

Firstly, the proof of Lemma \ref{existence of liftings} shows that
for each $U\in \sU$, there exists a set of elements $\{\tilde
g^{i,n-i}\}\subset M_1|_{U_1}$ such that
\begin{itemize}
    \item [(i)] $\tilde
g^{i,n-i} \mod Fil^{i+1}M_1=g^{i,n-i}$,
    \item [(ii)] $\tilde
g^{i,n-i}\mod p\in C_0^{-1}[Gr_{Fil^\cdot}\circ
C_0^{-1}(G,\theta)_0]$.
\end{itemize}
Then we define as before
$$
C_{1}^{-1}(G,\theta)|_{U_1}=\Span[\frac{\Phi_{F_{\hat U}}}{p^i}(\tilde g^{i,n-i}\otimes 1),\ 0\leq i\leq n].
$$
It is direct to check the following equalites:
$$
[C_{1}^{-1}(G,\theta)|_{U_1}]_0=[C_0^{-1}(G,\theta)_0]|_{U_0},
$$
and
$$
[\tilde
\Phi(C_{1}^{-1}(G,\theta)|_{U_1})]_0=[C_0^{-1}(Gr_{Fil^\cdot}\circ
C_0^{-1}(G,\theta)_0)]|_{U_0}.
$$
We use now the eigen-decomposition and isomorphisms in the lifted
Lemma \ref{lemma on eigendecomposition of s_MF} over $W_2$:
$$
(M,\nabla)^{\oplus 2}=(M^0,\nabla^0)\oplus (M^1,\nabla^1), \
\alpha_i: (M,\nabla)\cong (M^i,\nabla^i), i=0,1.
$$
We claim that the local subbundles
$$
\{\alpha_0[\tilde \Phi (C_{1}^{-1}(G,\theta)|_{U_1})]\oplus
\alpha_1[C_{1}^{-1}(G,\theta)|_{U_1}]\}_{U\in \sU}
$$
of $(M^0,\nabla^0)\oplus (M^1,\nabla^1)$ are well defined, i.e.
independent of the choices of elements $\{\tilde g^{i,n-i}\}$ given
as above, and $\nabla$-invariant, and they glue. Our old proofs for
$C_1^{-1}$ go through, because the mod $p$ reductions of these local
bundles are simply
$$
\{\alpha_0[C_0^{-1}(Gr_{Fil^{\cdot}}\circ
C_0^{-1}(G,\theta)_0)]\oplus\alpha_1[C_0^{-1}(G,\theta)_0]\}|_{U_0},
$$
and obviously they glue into
$$
\alpha_0[C_0^{-1}(Gr_{Fil^{\cdot}}\circ
C_0^{-1}(G,\theta)_0)]\oplus\alpha_1[C_0^{-1}(G,\theta)_0],
$$
which is just
$$
C_0^{-1}[\beta_0(G,\theta)_0\oplus
\beta_1(Gr_{Fil^{\cdot}}\circ C_0^{-1}(G,\theta)_0)],
$$
and the Higgs bundle $\beta_0(G,\theta)_0\oplus
\beta_1(Gr_{Fil^{\cdot}}\circ C_0^{-1}(G,\theta)_0)$ is periodic of
period one. The gluing implies that
$\{\alpha_1[C_{1}^{-1}(G,\theta)|_{U_1}]\}_{U\in \sU}$ glue into a
de Rham subbundle of $M^1$. Therefore,
$\{C_{1}^{-1}(G,\theta)|_{U_1}\}_{U\in \sU}$ glue into a de Rham
subbundle $C_1^{-1}(G,\theta)$ of $(M,\nabla)_1$ whose modulo $p$
reduction is $C_0^{-1}(G,\theta)_0$, as claimed.
\end{proof}

\section{Appendix: the inverse Cartier transform of Ogus and Vologodsky}
The appendix explains the equivalence (up to sign) of the inverse
Cartier transform of Ogus and Vologodsky \cite{OV} and the
association defined in \cite{SXZ} in the subobjects setting. We thank
heartily Arthur Ogus for pointing out the equivalence follows from
Remark 2.10 \cite{OV} in \cite{O1}. Our exposition is based on his
remark. In the following we shall quote the notations and results in
\cite{OV}, \cite{SXZ} and \cite{LSZ} freely. In his forthcoming
doctor thesis \cite{X}, H. Xin shall explain the equivalence as well as that in the logarithmic case in detail. We divide the proof into two steps:\\

{\itshape Step 1.} Let $(E,\theta)$ be a nilpotent Higgs bundle of exponent $\leq p-1$. The inverse Cartier transform of $(E,\theta)$ after Ogus and Vologodsky is defined by
$$
(M,\nabla)_{(E,\theta)}:=\sB_{\sX/\sS}\otimes_{\hat{\Gamma}_{\cdot}T_{X'/S}}\iota^*\pi^*(E,\theta).
$$
The construction is global. In \cite{LSZ}, we construct a flat
bundle $(M_{\exp},\nabla_{\exp})_{(E,\theta)}$ by gluing the local
flat subbundles
$$
\{F_{U_0}^*E,\nabla_{can}+(id\otimes \frac{dF_{\hat U}}{p})\circ F_{U_0}^*\theta\}_{U\in \sU}
$$
via an exponential function, where $F_{U_0}$ is the absolute Frobenius over $U_0$.
\begin{claim}
There is a functorial isomorphism
$$
(M,\nabla)_{(E,\theta)}\cong (M_{\exp},\nabla_{exp})_{(E,-\theta)}.
$$
\end{claim}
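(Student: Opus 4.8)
The plan is to reduce the global claim to a local identification together with a comparison of gluing cocycles, following the hint of Remark 2.10 \cite{OV}. First I would recall that, locally on a small affine $U\in\sU$ equipped with the chosen Frobenius lifting $F_{\hat U}$, the torsor $\sB_{\sX/\sS}$ trivializes: the lift $F_{\hat U}$ determines a splitting, and unwinding the definition $\sB_{\sX/\sS}\otimes_{\hat{\Gamma}_{\cdot}T_{X'/S}}\iota^*\pi^*(E,\theta)$ yields a canonical isomorphism
$$
(M,\nabla)_{(E,\theta)}|_{U_0}\;\cong\;\bigl(F_{U_0}^{*}E,\ \nabla_{can}+(\mathrm{id}\otimes\tfrac{dF_{\hat U}}{p})\circ F_{U_0}^{*}\theta\bigr),
$$
i.e. exactly the local flat bundle that enters the gluing datum of \cite{LSZ}, once one normalizes $\theta\mapsto-\theta$. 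The point to be checked here is that the nilpotency hypothesis (exponent $\leq p-1$) is precisely what makes the divided-power series defining the action of $\hat{\Gamma}_{\cdot}T_{X'/S}$ terminate, so that the two local models literally agree rather than merely being formally isomorphic.

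Second I would compare the two ways of gluing over an overlap $U\cap V$. On the one hand, the OV construction re-trivializes by the element of $\hat{\Gamma}_{\cdot}T_{X'/S}$ that measures the difference of the splittings attached to $F_{\hat U}$ and $F_{\hat V}$; by Remark 2.10 \cite{OV} this element is built from $h_{UV}:=\tfrac{1}{p}(F_{\hat U}-F_{\hat V})$, which is a well-defined section because the two Frobenius lifts agree modulo $p$. On the other hand, the \cite{LSZ} construction glues $F_{U_0}^{*}E$ to $F_{V_0}^{*}E$ via the exponential $\exp(h_{UV}\cdot\theta)$ (or $\exp(-h_{UV}\cdot\theta)$, which is what pins down the sign). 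The substance of the claim is the identity of these two cocycles; I would verify it by expanding both in local coordinates $\{t_i\}$ and invoking the Taylor formula of \S\ref{MF category}, together with the fact that under the exponent bound the divided-power exponential on $\hat{\Gamma}_{\cdot}$ coincides with the ordinary exponential. Functoriality in $(E,\theta)$ is then free, since both sides are obtained by the same base change along $(E,\theta)$ of one and the same glued sheaf of algebras, so the locally defined isomorphism is automatically natural and globalizes.

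The main obstacle I expect is precisely the bookkeeping in the second step: translating the PD-torsor formalism of \cite{OV} into the naive exponential cocycle of \cite{LSZ}, and in particular fixing the sign — that is, confirming that the correct matching is $(M,\nabla)_{(E,\theta)}\cong(M_{\exp},\nabla_{\exp})_{(E,-\theta)}$ and not the untwisted one. This is entirely a matter of conventions (the direction of the connection on $\sB_{\sX/\sS}$, the sign in $\tfrac{dF}{p}$, and the orientation chosen for the exponential), and the cleanest resolution is to write out both connections on $F_{U_0}^{*}E$ in a local frame, compare the first-order terms, and then check a single overlap. A secondary point to make sure of is that the global object $(M_{\exp},\nabla_{\exp})$ of \cite{LSZ} is independent of the auxiliary choices in the same manner as $(M,\nabla)_{(E,\theta)}$, but this is already built into the cited constructions.
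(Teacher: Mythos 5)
Your proposal follows essentially the same route as the paper's proof: trivialize $\sB_{\sX/\sS}$ locally via the chosen Frobenius lifting to identify $(M,\nabla)_{(E,\theta)}|_{U_0}$ with the local model $F_{U_0}^{*}E$ of \cite{LSZ} (the paper pins the sign on the involution $\iota$, via Formula (2.11.2) of \cite{OV}), and then use Remark 2.10 of \cite{OV} to match the change-of-trivialization data with the exponential gluing cocycle, the nilpotency bound ensuring the divided-power exponential reduces to the ordinary one. This is exactly the argument given in the appendix, so the proposal is correct and takes the same approach.
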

\begin{proof}
Recall that we have chosen an affine covering $\sU$ of $X$, together with a choice of
Frobenius liftings for each $U\in \sU$. Thus over each $U$, the lifting $F_{\hat U}$ defines an isomorphism of $\hat{\Gamma}_{\cdot}T_{U_0'/k}$-modules:
$$
\sB_{\sX/\sS}|_{U_0}\cong F_{U_0/k}^*\hat{\Gamma}_{\cdot}T_{U_0'/k},
$$
where $F_{U_0/k}: U_0\to U_0'$ is the relative Frobenius. Therefore, one has a natural isomorphism
\begin{eqnarray*}
 [(M,\nabla)_{(E,\theta)}]|_{U_0}&\cong & \sB_{\sX/\sS}|_{U_0}\otimes_{\hat{\Gamma}_{\cdot}T_{U_0'/k}}\iota^*\pi^*E|_{U_0}\\
  &\cong& F_{U_0/k}^*\hat{\Gamma}_{\cdot}T_{U_0'/k}\otimes_{\hat{\Gamma}_{\cdot}T_{U_0'/k}}\iota^*\pi^*E|_{U_0} \\
  &\cong&F_{U_0}^*\iota_{*}E.
\end{eqnarray*}
This gives actually an isomorphism of flat bundles
$$
[(M,\nabla)_{(E,\theta)}]|_{U_0}\cong
[(M_{\exp},\nabla_{\exp})_{(E,-\theta)}]_{U_0},
$$
followed from the description in Formula (2.11.2) \cite{OV}. The
sign comes from the involution $\iota$. Now Remark 2.10 loc. cit.
tells how the above isomorphisms change when we choose another
Frobenius lifting. Precisely, let $F_{\hat U}'$ be another choice,
then their difference defines an element $\xi\in
F_{U_0/k}^*T_{U_0'/k}$. As $\sB_{\sX/\sS}$ is a
$F^*_{X_0/k}\hat{\Gamma}_{\cdot}T_{X_0'/k}$-torsor, different local
trivializations are related via the Taylor formula or equivalently
the exponential $\exp\ D_{\xi}$. As $E$ is nilpotent with exponent
$\leq p-1$ by assumption, the action on the Higgs field becomes a
twist via the usual exponential function. When we interpret the
change of isomorphisms into the gluing data, this is exactly the
form given in \cite{LSZ}. Therefore, there is a natural isomorphism
between $(M,\nabla)_{(E,\theta)}$ and
$(M_{\exp},\nabla_{exp})_{(E,-\theta)}$.
\end{proof}

{\itshape Step 2.} Let $M\in \mathcal{MF}^{\nabla}_{[0,n]}(X), n\leq
p-2$ with $pM=0$. We remark that Faltings category exists also for
$n\leq p-1$ (see Theorem 2.3 \cite{Fa2}). But in several places of
\cite{SXZ} invoking the Taylor formula, the assumption $n\leq p-2$
has been explicitly used. So we have to keep the assumption $n\leq
p-2$ here. Let $(G,\theta)$ be a Higgs subbundle of
$(E,\theta)=Gr_{Fil^{\cdot}}(M,\nabla)$ which is nilpotent of
exponent $\leq p-2$. In \cite{SXZ}, we associate $(G,\theta)$ a de
Rham subbundle $(M_{(G,\theta)},\nabla)$ of $(M,\nabla)$ by a local
lifting and gluing process.
\begin{claim}
There is an isomorphism $\phi:
(M_{exp},\nabla_{exp})_{(E,\theta)}\cong (M,\nabla)$ such that for
any Higgs subbundles $G\subset E$,
$$
\phi[(M_{exp},\nabla_{exp})_{(G,\theta)}]=(M_{(G,\theta)},\nabla).
$$
\end{claim}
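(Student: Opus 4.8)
The plan is to exhibit the flat bundle $(M_{\exp},\nabla_{\exp})_{(E,\theta)}$ as the de Rham bundle $(M,\nabla)$ itself, by gluing local isomorphisms manufactured out of the (divided) relative Frobenii, and then to observe that for a Higgs subbundle $(G,\theta)$ these same local isomorphisms transport the flat subbundle glued from $(G,\theta)$ onto the local pieces of the de Rham subbundle $(M_{(G,\theta)},\nabla)$ of \cite{SXZ}. Throughout I fix the small affine covering $\sU$ together with the chosen Frobenius liftings $F_{\hat U}$, and use that $pM=0$, so that $F_{\hat U}^{*}M=F_{U_{0}}^{*}M$ (pullback along a lift agrees with pullback along the absolute Frobenius $F_{U_{0}}$ of $U_{0}$) and that $dF_{\hat U}$ is divisible by $p$, the reduction of $\tfrac{dF_{\hat U}}{p}$ being an $\sO_{U_{0}}$-linear map $F_{U_{0}}^{*}\Omega_{U_{0}}\to\Omega_{U_{0}}$.

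First I would build the local isomorphisms. On each $U\in\sU$ the $p$-torsion form of the strong $p$-divisibility provides $\sO_{U_{0}}$-linear divided Frobenii $\tfrac{\Phi_{F_{\hat U}}}{p^{i}}\colon F_{U_{0}}^{*}(Fil^{i}M)\to M$ for $0\le i\le n$, and the relation $\tfrac{\Phi_{F_{\hat U}}}{p^{i}}=p\cdot\tfrac{\Phi_{F_{\hat U}}}{p^{i+1}}$ on $F_{\hat U}^{*}Fil^{i+1}M$ forces $\tfrac{\Phi_{F_{\hat U}}}{p^{i}}$ to kill $F_{U_{0}}^{*}(Fil^{i+1}M)$, hence to factor through $\bar\Phi^{U}_{i}\colon F_{U_{0}}^{*}E^{i,n-i}\to M$. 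Set $\phi_{U}:=\bigoplus_{i=0}^{n}\bar\Phi^{U}_{i}\colon F_{U_{0}}^{*}E|_{U_{0}}\to M|_{U_{0}}$; the span formulation makes it independent of any auxiliary splitting of $Fil^{\cdot}M$. The surjectivity part $\sum_{i}\tfrac{\Phi_{F_{\hat U}}}{p^{i}}F_{\hat U}^{*}Fil^{i}M=M$ of strong $p$-divisibility makes $\phi_{U}$ surjective, hence an isomorphism between locally free sheaves of equal rank on the smooth variety $U_{0}$. Writing out the horizontality of $\Phi_{F_{\hat U}}$ through the Taylor description of $F_{\hat U}^{*}\nabla$ recalled in \S\ref{MF category}, together with Griffiths transversality (which lowers $Fil^{\cdot}$ by one and thereby produces the extra $1/p$ converting $dF_{\hat U}$ into $\tfrac{dF_{\hat U}}{p}$), one checks that $\phi_{U}$ carries $\nabla_{can}+(\id\otimes\tfrac{dF_{\hat U}}{p})\circ F_{U_{0}}^{*}\theta$ to $\nabla$. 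This is precisely the local description of the inverse Cartier transform in \cite{SXZ}, matching Formula (2.11.2) \cite{OV} already invoked in Step 1.

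Next I would glue. Over $U\cap V$ the transition $\phi_{V}^{-1}\circ\phi_{U}$ is governed by the Taylor formula comparing $\Phi_{F_{\hat U}}$ and $\Phi_{F_{\hat V}}$, with $z=F_{\hat U}(t)-F_{\hat V}(t)\in p\,\sO_{\hat U}$ read modulo $p^{2}$; since $(E,\theta)$ is nilpotent of bounded exponent and the assumption $n\le p-2$ keeps us in the range where the Taylor formula is available, this series terminates and, after absorbing the divided powers $\tfrac{z^{j}}{p^{j}j!}$, becomes exactly the exponential of the $dF$-twist, i.e. the gluing cocycle defining $(M_{\exp},\nabla_{\exp})_{(E,\theta)}$ in \cite{LSZ}. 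Hence $\{\phi_{U}\}_{U\in\sU}$ descend to a global isomorphism of flat bundles $\phi\colon (M_{\exp},\nabla_{\exp})_{(E,\theta)}\xrightarrow{\ \sim\ }(M,\nabla)$; composed with the Claim of Step 1 this identifies, up to the sign $\theta\mapsto-\theta$, the Ogus--Vologodsky transform of $(E,\theta)$ with $(M,\nabla)$, as required.

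Finally, the subobject compatibility. For a Higgs subbundle $(G,\theta)\subset(E,\theta)$ the flat subbundle $(M_{\exp},\nabla_{\exp})_{(G,\theta)}$ is glued from $\{F_{U_{0}}^{*}G|_{U_{0}}\}$ with the restriction of the above cocycle, so $\phi_{U}$ sends it to $\bigoplus_{i}\bar\Phi^{U}_{i}\bigl(F_{U_{0}}^{*}G^{i,n-i}\bigr)$, where $G^{i,n-i}=G\cap E^{i,n-i}$; choosing a local basis $\{g^{i,n-i}\}$ of $G^{i,n-i}$ and lifts $\tilde g^{i,n-i}\in Fil^{i}M$ of them, this equals $\Span\bigl[\tfrac{\Phi_{F_{\hat U}}}{p^{i}}(F_{\hat U}^{*}\tilde g^{i,n-i}),\ 0\le i\le n\bigr]$, which is exactly the local piece of the de Rham subbundle $M_{(G,\theta)}$ of \cite{SXZ}. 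As the two global gluings are the same cocycle transported by $\phi$, one gets $\phi\bigl[(M_{\exp},\nabla_{\exp})_{(G,\theta)}\bigr]=(M_{(G,\theta)},\nabla)$, and functoriality in $(E,\theta)$ is inherited from that of all the ingredients. I expect the cocycle comparison of the third paragraph to be the main obstacle: reconciling the exponential gluing of \cite{LSZ} with the change-of-Frobenius-lifting formula for the divided Frobenii, with careful bookkeeping of the divided-power denominators; it is the characteristic $p$ shadow of Proposition \ref{independence of choice of Frobenius lifting}, ``tedious but straightforward'' in the idiom of the paper.
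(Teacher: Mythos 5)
Your proposal is correct and takes essentially the same route as the paper: the paper's entire proof of this claim is the one-line citation ``This follows from Proposition 5 \cite{LSZ} and its proof,'' and what you have written is precisely an unfolding of that argument --- local identifications $\phi_U=\bigoplus_i \overline{\Phi}{}^U_i$ built from the divided Frobenii $\frac{\Phi_{F_{\hat U}}}{p^{i}}$ (isomorphisms by strong $p$-divisibility), horizontality via Griffiths transversality and the $\frac{dF_{\hat U}}{p}$-twist, and the Taylor-formula versus exponential-cocycle comparison, which is exactly the content of the cited proposition. Your subobject compatibility step also matches the local span description $\Span[\frac{\Phi_{F_{\hat U}}}{p^{i}}(F_{\hat U}^{*}\tilde g^{i,n-i})]$ of $M_{(G,\theta)}$ that the paper itself uses (e.g.\ in the proof of Proposition \ref{basic property of C_0^{-1}}).
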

\begin{proof}
This follows from Proposition 5 \cite{LSZ} and its proof.
\end{proof}
We summarize the previous discussions into the following statement. Recall the notations in \S\ref{section on periodic Higgs subbundles in char
p}: $(\sX,\sS)=(X_0/k,X_1'/W_2(k))$ and $\pi: X_0'\to X_0$ is the natural map setting in the Cartesian diagram of the base change.
\begin{proposition}
Let $C_{\sX/\sS}^{-1}$ be the inverse Cartier transform of Ogus and
Vologodsky \cite{OV}. For an $M\in \mathcal{MF}^\nabla$ with $pM=0$
let $(E,\theta)=Gr_{Fil^\cdot}(M,\nabla)$ the associated Higgs
bundle over $X_0$. Then there is an isomorphism of flat bundles
$$
\psi: C_{\sX/\sS}^{-1}\pi^*(E,-\theta)\cong (M,\nabla)
$$
such that for any Higgs subbundle $(G,\theta)\subset (E,\theta)$,
$$
\psi[C_{\sX/\sS}^{-1}\pi^*(G,-\theta)]=(M_{(G,\theta)},\nabla),
$$
where $(M_{(G,\theta)},\nabla)$ is the associated de Rham subbundle
of $(M,\nabla)$ constructed in \cite{SXZ}.
\end{proposition}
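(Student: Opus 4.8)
The plan is to obtain $\psi$ by concatenating the two isomorphisms of flat bundles produced in Step~1 and Step~2, and then to check that the composite transports subobjects as claimed. Before doing so I would first verify that the hypotheses of both Claims are in force here. Since $M\in\mathcal{MF}^{\nabla}_{[0,n]}(X)$ with $n\le p-2$ and $pM=0$, Griffiths transversality forces $\theta^{\,n+1}=0$ on $(E,\theta)=Gr_{Fil^{\cdot}}(M,\nabla)$; hence $(E,\theta)$, and with it every Higgs subbundle $(G,\theta)\subset(E,\theta)$, is nilpotent of exponent $\le n\le p-2$. In particular the exponent bound $\le p-1$ needed in Step~1 and the bound $\le p-2$ needed in Step~2 both hold, so $(M,\nabla)_{(E,-\theta)}:=C_{\sX/\sS}^{-1}\pi^*(E,-\theta)$, and, since $C_{\sX/\sS}^{-1}$ is an equivalence of categories (hence exact), the flat subbundle $(M,\nabla)_{(G,-\theta)}=C_{\sX/\sS}^{-1}\pi^*(G,-\theta)$ inside it, are both defined.

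Next I would apply the Claim of Step~1 with $\theta$ replaced by $-\theta$. This gives a functorial isomorphism of flat bundles
$$
\psi_{1}\colon\ C_{\sX/\sS}^{-1}\pi^*(E,-\theta)=(M,\nabla)_{(E,-\theta)}\ \xrightarrow{\ \sim\ }\ (M_{\exp},\nabla_{\exp})_{(E,\theta)},
$$
and functoriality in the Higgs bundle carries the subbundle $(M,\nabla)_{(G,-\theta)}$ onto $(M_{\exp},\nabla_{\exp})_{(G,\theta)}$ for every $(G,\theta)\subset(E,\theta)$. Then I would invoke the Claim of Step~2, which gives an isomorphism $\phi\colon(M_{\exp},\nabla_{\exp})_{(E,\theta)}\xrightarrow{\ \sim\ }(M,\nabla)$ with $\phi[(M_{\exp},\nabla_{\exp})_{(G,\theta)}]=(M_{(G,\theta)},\nabla)$. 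Putting $\psi:=\phi\circ\psi_{1}$ yields the asserted isomorphism $C_{\sX/\sS}^{-1}\pi^*(E,-\theta)\cong(M,\nabla)$, and tracing a Higgs subbundle through the two steps gives
$$
\psi\bigl[C_{\sX/\sS}^{-1}\pi^*(G,-\theta)\bigr]=\phi\bigl[\psi_{1}[(M,\nabla)_{(G,-\theta)}]\bigr]=\phi\bigl[(M_{\exp},\nabla_{\exp})_{(G,\theta)}\bigr]=(M_{(G,\theta)},\nabla),
$$
which is the conclusion.

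I expect the one nonformal point to be the bookkeeping already internal to Steps~1 and~2, rather than the concatenation itself: namely (a) confirming that the isomorphism of Step~1 is genuinely functorial for inclusions of sub-Higgs-bundles — this is read off from the local description of $C_{\sX/\sS}^{-1}$ in Formula (2.11.2) of \cite{OV} together with Remark~2.10 loc.\ cit.\ (the change-of-Frobenius-lifting rule via $\exp D_{\xi}$), which matches the gluing recipe of \cite{LSZ}; and (b) keeping the sign straight, i.e.\ that the involution $\iota$ inserts a single sign change on $\theta$, so that the $(E,-\theta)$ appearing in the statement is precisely what is needed for the output of Step~1 to be $(M_{\exp},\nabla_{\exp})_{(E,+\theta)}$, the input of Step~2, with no residual sign surviving in $\psi$. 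Once (a) and (b) are settled, the proof is the formal composition above.
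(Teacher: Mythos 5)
Your proposal is correct and follows essentially the same route as the paper: the paper explicitly states the proposition as a summary of Step 1 and Step 2 of the appendix, i.e.\ as the composition of the functorial isomorphism $C_{\sX/\sS}^{-1}\pi^*(E,-\theta)\cong(M_{\exp},\nabla_{\exp})_{(E,\theta)}$ with the isomorphism $\phi$ of Step 2, exactly as you do. Your added checks on the nilpotency exponent and the single sign flip coming from the involution $\iota$ are consistent with what the paper leaves implicit.
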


\end{document}